\theoremstyle{plain}
\newtheorem{theorem}{Theorem}[section]
\newtheorem{corollary}[theorem]{Corollary}
\newtheorem{proposition}[theorem]{Proposition}
\newtheorem{lemma}[theorem]{Lemma}
\theoremstyle{definition}
\newtheorem{definition}[theorem]{Definition}
\newtheorem{example}[theorem]{Example}
\newtheorem{examples}[theorem]{Examples}
\newtheorem{remark}[theorem]{Remark}
\newtheorem{question}[theorem]{Question}
\newcommand{\N}{\ensuremath{\mathbb{N}}}
\newcommand{\C}{\ensuremath{\mathbb{C}}}
\newcommand{\K}{\ensuremath{\mathbb{K}}}
\newcommand{\R}{\ensuremath{\mathbb{R}}}
\newcommand{\T}{\ensuremath{\mathbb{T}}}
\newcommand{\eps}{\ensuremath{\varepsilon}}
\newcommand{\ext}{\operatorname{ext}}
\newcommand{\extm}{\operatorname{extm}}
\newcommand{\conv}{\operatorname{conv}}
\newcommand{\aconv}{\operatorname{aconv}}
\newcommand{\att}{\operatorname{att}}
\newcommand{\dent}{\operatorname{dent}}
\newcommand{\Slice}{\operatorname{Slice}}
\newcommand{\Face}{\operatorname{Face}}
\newcommand{\re}{\operatorname{Re}}
\newcommand{\Id}{\operatorname{Id}}
\newcommand{\Gen}{\operatorname{Gen}}
\newcommand{\Spear}{\operatorname{Spear}}
\newcommand{\diam}{\operatorname{diam}}
\newcommand{\dens}{\operatorname{dens}}
\newcommand{\dist}{\operatorname{dist}}
\renewcommand{\leq}{\leqslant}
\renewcommand{\geq}{\geqslant}
\begin{document}
\title[Generating operators between Banach space]{Generating operators between Banach spaces}
\author[Kadets]{Vladimir Kadets}
\address[Kadets]{Holon Institute of Technology\\ School of Mathematical Sciences\\  52 Golomb Street\\ POB 305 Holon 5810201\\ Israel \newline
\href{http://orcid.org/0000-0002-5606-2679}{ORCID: \texttt{0000-0002-5606-2679}}}
\email{vova1kadets@yahoo.com}

\author[Mart\'{\i}n]{Miguel Mart\'{\i}n}
	\address[Mart\'{\i}n]{Universidad de Granada \\ Facultad de Ciencias \\
		Departamento de An\'{a}lisis Matem\'{a}tico \\ E-18071 Granada \\
		Spain \newline
		\href{http://orcid.org/0000-0003-4502-798X}{ORCID: \texttt{0000-0003-4502-798X} }}
	\email{mmartins@ugr.es}
	\urladdr{\url{https://www.ugr.es/local/mmartins}}

\author[Mer\'{\i}]{Javier Mer\'{\i}}
	\address[Mer\'{\i}]{Universidad de Granada \\ Facultad de Ciencias \\
		Departamento de An\'{a}lisis Matem\'{a}tico \\ E-18071 Granada \\
		Spain \newline
		\href{http://orcid.org/0000-0002-0625-5552}{ORCID: \texttt{0000-0002-0625-5552} }}
	\email{jmeri@ugr.es}

\author[Quero]{Alicia Quero}
	\address[Quero]{Universidad de Granada \\ Facultad de Ciencias \\
		Departamento de An\'{a}lisis Matem\'{a}tico \\ E-18071 Granada \\
		Spain \newline
		\href{http://orcid.org/0000-0003-4534-8097}{ORCID: \texttt{0000-0003-4534-8097} }}
	\email{aliciaquero@ugr.es}

\date{June 5th, 2023}

\begin{abstract}
	We introduce and study the notion of generating operators as those norm-one operators $G\colon X\longrightarrow Y$ such that for every $0<\delta<1$, the set $\{x\in X\colon \|x\|\leq 1,\ \|Gx\|>1-\delta\}$ generates the unit ball of $X$ by closed convex hull. This class of operators includes isometric embeddings, spear operators (actually, operators with the alternative Daugavet property), and other examples like the natural inclusions of $\ell_1$ into $c_0$ and of $L_\infty[0,1]$ into $L_1[0,1]$. We first present a characterization in terms of the adjoint operator, make a discussion on the behaviour of diagonal generating operators on $c_0$-, $\ell_1$-, and $\ell_\infty$-sums, and present examples in some classical Banach spaces. Even though rank-one generating operators always attain their norm, there are generating operators, even of rank-two, which do not attain their norm. We discuss when a Banach space can be the domain of a generating operator which does not attain its norm in terms of the behaviour of some spear sets of the dual space. Finally, we study when the set of all generating operators between two Banach spaces $X$ and $Y$ generates all non-expansive operators by closed convex hull. We show that this is the case when $X=L_1(\mu)$ and $Y$ has the Radon-Nikod\'{y}m property with respect to $\mu$. Therefore, when $X=\ell_1(\Gamma)$, this is the case for every target space $Y$. Conversely, we also show that a real finite-dimensional space $X$ satisfies that generating operators from $X$ to $Y$ generate all non-expansive operators by closed convex hull only in the case that $X$ is an $\ell_1$-space.
\end{abstract}

	\subjclass{Primary 46B04,  Secondary 46B20, 46B22, 47A30}
	\keywords{Bounded linear operators on Banach spaces; norm attainment; spear sets, spear operators}

\maketitle

\section{Introduction}
Let $X$ and $Y$ be Banach spaces over the field $\K$ ($\K=\R$ or $\K=\C$). We denote by $\mathcal{L}(X,Y)$ the space of all bounded linear operators from $X$ to $Y$ and write $X^*=\mathcal{L}(X,\K)$ to denote the dual space. By $B_X$ and $S_X$ we denote the closed unit ball and the unit sphere of $X$, respectively, and we write $\T$ for the set of modulus one scalars. Some more notation and definitions (which are standard) are included in Subsection~\ref{Subsect:notation} at the end of this introduction.

The concept of spear operator was introduced in \cite{Ardalani} and deeply studied in the book \cite{KMMP-Spear}. A norm-one operator $G\in \mathcal{L}(X,Y)$ is said to be an \emph{spear operator} if the norm equality
$$
\max_{\theta\in \T}\|G+\theta T\|=1+\|T\|
$$
holds for all $T\in \mathcal{L}(X,Y)$.
This concept extends the properties of the identity operator in those Banach spaces having numerical index one and it is satisfied, for instance, by the Fourier transform on $L_1$. There are isometric and isomorphic consequences on the domain and range spaces of a spear operator as, for instance, in the real case, the dual of the domain of a spear operator with infinite rank has to contain a copy of $\ell_1$. For more information and background, we refer the interested reader to the already cited book \cite{KMMP-Spear}. Even though the definition of spear operator given above does not need numerical ranges, it is well known that spear operators are exactly those operators such that the numerical radius with respect to them coincides with the operator norm. Let us introduce the relevant definitions. Fixed a norm-one operator $G\in \mathcal{L}(X,Y)$, the \emph{numerical radius with respect to $G$} is the seminorm defined as
\begin{align*}
v_G(T)&:=\sup\{|\phi(T)|\colon \phi\in \mathcal{L}(X,Y)^*,\,\phi(G)=1\}\\
&=\inf_{\delta>0}\sup\{|y^*(Tx)|\colon y^*\in S_{Y^*},\,x\in S_X,\, \re y^*(Gx)>1-\delta\}
\end{align*}
for every $T\in \mathcal{L}(X,Y)$ (the equality above was proved in \cite[Theorem~2.1]{Mar-numrange-JMAA2016}). Observe that $v_G(\cdot)$ is a seminorm in $\mathcal{L}(X,Y)$ which clearly satisfies
\begin{equation}\label{eq:vGleqnorm}
v_G(T)\leq \|T\| \qquad (T\in \mathcal{L}(X,Y)).
\end{equation}
Then, $G$ is a spear operator if and only if $v_G(T)=\|T\|$ for every $T\in \mathcal{L}(X,Y)$ (see \cite[Proposition~3.2]{KMMP-Spear}).

Our discussion here starts with the observation that it is possible to introduce a natural seminorm between $v_G(T)$ and $\|T\|$ in Eq.~\eqref{eq:vGleqnorm}: the (semi-)norm relative to $G$. Let us introduce the needed notation and definitions. Let $X$, $Y$, $Z$ be Banach spaces and let $G\in \mathcal{L}(X,Y)$ be a norm-one operator. For $\delta>0$, we write $\att(G,\delta)$ to denote the \emph{$\delta$-attainment set} of $G$, that is,
$$
\att(G,\delta):=\{x\in S_X \colon \|Gx\|>1-\delta\}.
$$
If there exists $x\in S_X$ such that $\|Gx\|=1$, we say that $G$ \emph{attains its norm} and we denote by $\att(G)$ the attainment set of $G$:
$$
\att(G):=\{x\in S_X\colon \|Gx\|=1\}.
$$
We consider the parametric family of norms on $\mathcal{L}(X,Z)$ defined by
$$
\|T\|_{G,\delta}:=\sup \left\{ \|Tx\|\colon x\in \att(G,\delta) \right\} \qquad (T\in\mathcal{L}(X,Z))
$$
which are equivalent to the usual norm on $\mathcal{L}(X,Z)$ (this is so since $\att(G,\delta)$ has nonempty interior). We are interested in the (semi-)norm obtained taking infimum on this parametric family.

\begin{definition}
Let $X$, $Y$ and $Z$ be Banach spaces and let $G\in\mathcal{L}(X,Y)$ be a norm-one operator. For $T\in \mathcal{L}(X,Z)$, we define the \emph{(semi-)norm of $T$ relative to $G$} by
$$
\|T\|_G:=\inf_{\delta>0} \|T\|_{G,\delta}.
$$
\end{definition}

When $Z=Y$, we clearly have that
$$
v_G(T)\leq \|T\|_G\leq \|T\| \qquad (T\in \mathcal{L}(X,Y))
$$
and so this $\|\cdot\|_G$ is the promised seminorm to extend Eq.~\eqref{eq:vGleqnorm}. We may study the possible equality between $v_G(\cdot)$ and $\|\cdot\|_G$ and between $\|\cdot\|_G$ and the usual operator norm. We left the first relation for a subsequent paper which is still in process \cite{RelativeSpears}. The main aim in this manuscript is to study when the norm equality
\begin{equation}\label{eq:generating-def-eq-n_G-n}
\|T\|_G=\|T\|
\end{equation}
holds true.

\begin{definition}
Let $X$, $Y$ be Banach spaces. We say that $G\in \mathcal{L}(X,Y)$ with norm-one is \emph{generating} (or a \emph{generating operator}) if equality \eqref{eq:generating-def-eq-n_G-n} holds true for all $T\in \mathcal{L}(X,Y)$. We denote by $\Gen(X,Y)$ the set of all generating operators from $X$ to $Y$.
\end{definition}

Observe that both $\|\cdot\|_G$ and the operator norm can be defined for operators with domain $X$ and arbitrary range, so one may wonder if there are different definitions of generating requiring that Eq.~\eqref{eq:generating-def-eq-n_G-n} holds replacing $Y$ for other range spaces. This is not the case, as we will show in Section~\ref{Sect-characterization-first-results} that a generating operator $G$ satisfies that $\|T\|_G=\|T\|$ for every $T\in \mathcal{L}(X,Z)$ and every Banach space $Z$ (see Corollary~\ref{corollary:characterization-generating}). This is so thanks to a characterization of generating operators in terms of the sets $\att(G,\delta)$: $G$ is generating (if and) only if $\overline{\conv}(\att(G,\delta)) = B_X$ for every $\delta>0$, see  Corollary~\ref{corollary:characterization-generating} again. When the dimension of $X$ is finite, this is clearly equivalent to the fact that $\conv(\att(G))=B_X$ (actually, the same happens for compact operators defined on reflexive spaces, see Proposition~\ref{prop:X-reflexive-G-compact}). For some infinite-dimensional $X$, there are generating operators from $X$ which do not attain their norm, even of rank-two (see Example~\ref{example:generating-rank-two-NONA}); but there are even generating operators attaining the norm such that $\overline{\conv}(\att(G))$ has empty interior (see Example~\ref{example:generatingNonNA-convattemptyinterior}).

There is another characterization which involves the geometry of the dual space. We need some definitions. A subset $F$ of the unit ball of a Banach space $Z$ is said to be a \emph{spear set} of $Z$ \cite[Definition~2.3]{KMMP-Spear} if
$$
\max_{\theta \in \T}\sup_{z\in F}\|z+\theta x\|=1 + \|x\| \qquad (x\in Z).
$$
If $z\in S_Z$ satisfies that $F=\{z\}$ is a spear set, we just say that $z$ is a \emph{spear vector} and we write $\Spear(Z)$ for the set of spear vectors of $Z$. We refer the reader to \cite[Chapter 2]{KMMP-Spear} for more information and background. We will show that a norm-one operator $G\in \mathcal{L}(X,Y)$ is generating if and only if $G^*(B_{Y^*})$ is a spear set of $X^*$, see Corollary~\ref{corollary:charact-generating-dual-spear}. These characterizations appear in Section~\ref{Sect-characterization-first-results}, together with a discussion on the behaviour of diagonal generating operators on $c_0$-, $\ell_1$-,
and $\ell_\infty$-sums, and examples in some classical Banach spaces.

We next discuss in Section~\ref{Sect-normattaiment} the relationship between generating operators and norm attainment. On the one hand, we show that rank-one generating operators attain their norm (see Corollary~\ref{cor:x*-generating-attains-norm}) and, clearly, the same happens with isometric embeddings (which are generating), or with generating operators whose domain has the RNP (see Corollary~\ref{cor:domain-RNP}), as every generating operator attains its norm on denting points (see Lemma~\ref{lemma:denting-points}). But, on the other hand, there are generating operators, even of rank two, which do not attain their norm  (see Example~\ref{example:generating-rank-two-NONA}). We further discuss the possibility for a Banach space $X$ to be the domain of a generating operator which does not attain its norm in terms of the behaviour of some spear sets of $X^*$ (see Theorem~\ref{theorem:char-when-X-domain-generating-NONA}).

Finally, Section~\ref{section:setGen(X,Y)} is devoted to the study of the set $\Gen(X,Y)$. We show that it is closed (see Proposition~\ref{prop:GenXY-norm-closed}), and show that for every Banach space $Y$, there is a Banach space $X$ such that $\Gen(X,Y)=\emptyset$ (see Proposition~\ref{proposition:foreveryYthereisXgenempty}), but this result is not true for $Y=C[0,1]$ if we restrict the space $X$ to be separable (Example~\ref{example:Y=C[01]everyXseparableGennotempty}). We next study properties of $\Gen(X,Y)$ when $X$ is fixed. We first show that $\Gen(X,Y)\neq \emptyset$ for every $Y$ if and only if $\Spear(X^*)\neq \emptyset$ (see Corollary~\ref{corollary:GenXYnotemptyforallYiffspearXstarnotempty}) and that the only case in which there is $Y$ such that $\Gen(X,Y)=S_{\mathcal{L}(X,Y)}$ is when $X$ is one-dimensional (see Corollary~\ref{corollary:XonedimensionaliffGenXYequalall}). We then study the possibility that the set $\Gen(X,Y)$ generates the unit ball of $\mathcal{L}(X,Y)$ by closed convex hull, showing first that this is the case when
$X=L_1(\mu)$ and $Y$ has the RNP (Theorem~\ref{theorem:representable-in-Gen}) and when
$X=\ell_1(\Gamma)$ and $Y$ is arbitrary (see Proposition~\ref{cor:conv-Gen-l1}) and that this is the only possibility for \emph{real} finite-dimensional spaces (see Proposition~\ref{thm:conv-Gen-X-finite-ext}).

\subsection{A bit of notation}\label{Subsect:notation}
Let $X$, $Y$ be Banach spaces. We write $J_X\colon X\longrightarrow X^{**}$ to denote the natural inclusion of $X$ into its bidual space. A subset $\mathcal{A}\subseteq B_{X^*}$ is \emph{$r$-norming for $X$} ($0<r\leq 1$) if $rB_{X^*}\subseteq \overline{\aconv}^{w^*}(\mathcal{A})$ or, equivalently, if $r\|x\|\leq \sup_{x^*\in \mathcal{A}}|x^*(x)|$ for every $x\in Z$. The most interesting case is $r=1$: $\mathcal{A}$ is \emph{one-norming for $X$} if $B_{X^*}= \overline{\aconv}^{w^*}(\mathcal{A})$ or, equivalently, if $\|x\|= \sup_{x^*\in \mathcal{A}}|x^*(x)|$ for every $x\in Z$. A \emph{slice} of a closed convex bounded set $C\subset X$ is a nonempty intersection of $C$ with an open half-space. We write
$$
\Slice(C,f,\alpha):=\left\{x\in C\colon \re f(x)>\sup_{C}\re f - \alpha\right\}
$$
where $f\in X^*$ and $\alpha>0$, and observe that every slice of $C$ is of the above form.

For $A\subset X$, $\conv(A)$ and $\aconv(A)$ are, respectively, the convex hull and the absolutely convex hull of $A$; $\overline{\conv}(A)$ and $\overline{\aconv}(A)$ are, respectively, the closures of these sets. For $B\subset X$ convex, $\ext(B)$ denotes the set of extreme points of $B$.

\section{Characterizations, first results, and some examples} \label{Sect-characterization-first-results}
Our first result gives different characterizations for the equivalence of $\|\cdot\|$ and $\|\cdot\|_{G}$ on $\mathcal{L}(X,Z)$. As one may have expected, this does not depend on the range space $Z$.

\begin{proposition}\label{prop:characterization-rgenerating}
Let $X$, $Y$ be Banach spaces, let $G\in\mathcal{L}(X,Y)$ be a norm-one operator, and let $r\in (0,1]$.
Then, the following are equivalent:
\begin{enumerate}[$(i)$]
\item $\|T\|_G\geq r \|T\|$ for every Banach space $Z$ and every $T\in\mathcal{L}(X,Z)$.
\item There is a (non null) Banach space $Z$ such that $\|T\|_G\geq r \|T\|$ for every $T\in\mathcal{L}(X,Z)$.
\item There is a (non null) Banach space $Z$ such that $\|T\|_G\geq r \|T\|$ for every rank-one operator $T\in\mathcal{L}(X,Z)$.
\item $\|x^*\|_G\geq r\|x^*\|$ for every $x^*\in X^*$.
\item $\|x^*\|_{G,\delta}\geq r\|x^*\|$ for every $x^*\in X^*$ and every $\delta>0$.
\item $\overline{\conv}(\att(G,\delta))\supseteq rB_X$ for every $\delta>0$.
\end{enumerate}
\end{proposition}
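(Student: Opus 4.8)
The plan is to establish the cyclic chain of implications
$(i)\Rightarrow(ii)\Rightarrow(iii)\Rightarrow(iv)\Rightarrow(v)\Rightarrow(vi)\Rightarrow(i)$, in which the first two arrows are immediate (any non-null $Z$ witnesses $(ii)$, and rank-one operators form a subclass of all operators in $(iii)$). The passage $(iii)\Rightarrow(iv)$ I would obtain from rank-one factorizations: fixing a norm-one vector $z$ in the witnessing space $Z$, each $x^*\in X^*$ produces the rank-one operator $T\in\mathcal{L}(X,Z)$ given by $Tx=x^*(x)z$, for which $\|Tx\|=|x^*(x)|$ for all $x\in X$. Hence $\|T\|_{G,\delta}=\|x^*\|_{G,\delta}$ for every $\delta>0$ (so $\|T\|_G=\|x^*\|_G$) and $\|T\|=\|x^*\|$, so applying $(iii)$ to these operators is exactly $(iv)$. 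The arrow $(iv)\Rightarrow(v)$ is then trivial, since $\|x^*\|_G=\inf_{\delta>0}\|x^*\|_{G,\delta}\leq\|x^*\|_{G,\delta}$ for every $\delta$, whence $\|x^*\|_{G,\delta}\geq\|x^*\|_G\geq r\|x^*\|$.

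The geometric heart of the argument is the equivalence $(v)\Leftrightarrow(vi)$, which I would base on the observation that $\att(G,\delta)$ is $\T$-symmetric: if $x\in\att(G,\delta)$ and $\theta\in\T$, then $\|G(\theta x)\|=\|Gx\|>1-\delta$, so $\theta x\in\att(G,\delta)$. Consequently $C_\delta:=\overline{\conv}(\att(G,\delta))$ is a closed \emph{absolutely} convex subset of $B_X$, and since the absolute value of a convex combination is dominated by the largest absolute value of its terms,
$$
\|x^*\|_{G,\delta}=\sup_{x\in\att(G,\delta)}|x^*(x)|=\sup_{x\in C_\delta}|x^*(x)| \qquad (x^*\in X^*).
$$
With this identity, $(v)$ reads $\sup_{x\in C_\delta}|x^*(x)|\geq r\|x^*\|$ for all $x^*$, while $(vi)$ reads $C_\delta\supseteq rB_X$. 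The implication $(vi)\Rightarrow(v)$ is clear, since then the supremum dominates $\sup_{x\in rB_X}|x^*(x)|=r\|x^*\|$. For $(v)\Rightarrow(vi)$ I would argue by contradiction: if some $x_0\in rB_X\setminus C_\delta$, then, $C_\delta$ being closed and convex, the Hahn–Banach separation theorem yields $x^*\in X^*$ with $\re x^*(x_0)>\sup_{x\in C_\delta}\re x^*(x)$. Here the absolute convexity of $C_\delta$ lets me replace the right-hand side by $\sup_{x\in C_\delta}|x^*(x)|=\|x^*\|_{G,\delta}$, whereas $\re x^*(x_0)\leq\|x^*\|\,\|x_0\|\leq r\|x^*\|$; together with $(v)$ this gives $r\|x^*\|<\re x^*(x_0)\leq r\|x^*\|$, a contradiction.

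Finally, for $(vi)\Rightarrow(i)$, let $Z$ be arbitrary and $T\in\mathcal{L}(X,Z)$. Since $x\mapsto\|Tx\|$ is a continuous seminorm, hence convex, its supremum over a set equals its supremum over the closed convex hull of that set, so that
$$
\|T\|_{G,\delta}=\sup_{x\in\att(G,\delta)}\|Tx\|=\sup_{x\in C_\delta}\|Tx\|\geq\sup_{x\in rB_X}\|Tx\|=r\|T\|,
$$
where the inequality uses $(vi)$. Taking the infimum over $\delta>0$ gives $\|T\|_G\geq r\|T\|$, which is $(i)$, closing the cycle. I expect the main obstacle to be the geometric step $(v)\Rightarrow(vi)$: the separation argument must correctly exploit the $\T$-symmetry of $\att(G,\delta)$ in order to identify $\sup_{x\in C_\delta}\re x^*(x)$ with $\|x^*\|_{G,\delta}$, and one must be careful that the passage to closed convex hulls is performed legitimately both for the scalar functionals (via absolute values) in $(v)\Leftrightarrow(vi)$ and for the vector-valued seminorms $\|T\cdot\|$ in $(vi)\Rightarrow(i)$.
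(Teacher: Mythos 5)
Your proof is correct and follows essentially the same route as the paper: the only substantive steps are the rank-one factorization $T=x^*\otimes z$ for $(iii)\Rightarrow(iv)$, which you do exactly as the paper does, and the step $(v)\Rightarrow(vi)$, which the paper phrases via the bipolar theorem while you unpack it as a direct Hahn--Banach separation exploiting the balancedness of $\att(G,\delta)$ --- the same argument in slightly different clothing (and arguably cleaner, since you separate directly in $X$ rather than passing through $X^{**}$). The remaining implications are handled as in the paper, which simply declares them evident.
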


\begin{proof}\parskip=0ex
	The implications $(i)\Rightarrow(ii)\Rightarrow(iii)$, $(iv)\Leftrightarrow(v)$, and $(vi)\Rightarrow(i)$ are evident.
	
	$(iii)\Rightarrow(iv)$. Fix $z\in S_Z$ and, given $x^*\in X^*$, consider $T=x^*\otimes z \in \mathcal{L}(X,Z)$ which obviously satisfies $\|T\|=\|x^*\|$ and $\|T\|_G=\|x^*\|_G$.
	
	The remaining implication $(v)\Rightarrow(vi)$ follows from the Bipolar theorem. Indeed, for $\delta>0$, take $x\in rB_X$, we have to prove that $J_X(x)$ belongs to $\att(G,\delta)^{\circ \circ}$. For $x^*\in \att(G,\delta)^{\circ}$,
	$$
	|J_X(x)(x^*)|=|x^*(x)|\leq r\|x^*\|\leq \|x^*\|_G\leq \sup\left\{|x^*(x)|\colon x\in \att(G,\delta)\right\}\leq 1,
	$$
	where the second inequality follows from $(iv)$ and the last one from the fact that $x^*\in \att(G,\delta)^{\circ}$. Therefore $J_X(x)\in \att(G,\delta)^{\circ\circ}=\overline{\conv}^{w^*}(\att(G,\delta))$.
\end{proof}

Observe that item $(vi)$ in the previous result just means that, for every $\delta\in (0,1)$, the set $\att(G,\delta)$ is $r$-norming for $X^*$. This leads to the following concept which extends the one of generating operator.

\begin{definition}
Let $X$, $Y$ be Banach spaces, let $G\in\mathcal{L}(X,Y)$ be a norm-one operator and let $r\in (0,1]$. We say that $G$ is \emph{$r$-generating} if $\overline{\conv}(\att(G,\delta))\supseteq rB_X$ for every $\delta>0$.
\end{definition}

Of course, the case $r=1$ coincides with the generating operators introduced in the introduction. For them, the following characterization deserves to be emphasized.

\begin{corollary}\label{corollary:characterization-generating}
Let $X$, $Y$ be Banach spaces, let $G\in\mathcal{L}(X,Y)$ be a norm-one operator.
Then, the following are equivalent:
\begin{enumerate}[$(i)$]
\item $G$ is generating.
\item $\|T\|_G=\|T\|$ for every $T\in\mathcal{L}(X,Z)$ and every Banach space $Z$.
\item There is a (non null) Banach space $Z$ such that $\|T\|_G =\|T\|$ for every rank-one operator $T\in\mathcal{L}(X,Z)$.
\item $B_X=\overline{\conv}(\att(G,\delta))$ for every $\delta>0$.
\end{enumerate}
In particular, if there exists  $A\subseteq B_X$ which satisfies $\overline{\aconv}(A)=B_X$ and $A\subseteq \att(G,\delta)$ for every $\delta>0$, then $G$ is generating.
\end{corollary}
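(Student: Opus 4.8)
The plan is to read off the four equivalences directly from Proposition~\ref{prop:characterization-rgenerating} in the case $r=1$, and to treat the final assertion separately. Two standing facts turn the proposition's inequalities into the equalities of the corollary: the inequality $\|T\|_G\leq\|T\|$, valid for every range space $Z$ and every $T$ because $\att(G,\delta)\subseteq S_X\subseteq B_X$; and the inclusion $\overline{\conv}(\att(G,\delta))\subseteq B_X$, valid for the same reason.

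First I would match the conditions with $r=1$. Taking $Z=Y$ in condition $(ii)$ of the proposition gives $\|T\|_G\geq\|T\|$ for every $T\in\mathcal{L}(X,Y)$, which together with $\|T\|_G\leq\|T\|$ is precisely the definition of $G$ being generating, so it yields corollary~$(i)$; note that $Y\neq\{0\}$ since $G$ is norm-one, so $Y$ is an admissible non-null choice. In the same way condition $(i)$ of the proposition becomes corollary~$(ii)$, condition $(iii)$ of the proposition becomes corollary~$(iii)$, and condition $(vi)$ of the proposition, now reading $\overline{\conv}(\att(G,\delta))\supseteq B_X$, combines with the reverse inclusion to give corollary~$(iv)$. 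Since the cited proposition conditions are mutually equivalent, so are $(i)$--$(iv)$.

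For the final ``in particular'' statement I would verify condition $(iv)$. The crucial point is that every set $\att(G,\delta)$ is $\T$-invariant: if $\theta\in\T$ and $x\in\att(G,\delta)$ then $\|\theta x\|=1$ and $\|G(\theta x)\|=\|Gx\|>1-\delta$, so $\theta x\in\att(G,\delta)$. Hence $\conv(\att(G,\delta))$ is convex, $\T$-invariant, and contains $0$ (write $0=\tfrac12 x+\tfrac12(-x)$ using $-1\in\T$), which makes it---and therefore its closure---absolutely convex. Consequently, whenever $A\subseteq\att(G,\delta)$, the closed absolutely convex set $\overline{\conv}(\att(G,\delta))$ contains $\overline{\aconv}(A)=B_X$, forcing equality for every $\delta>0$ and so proving $G$ generating. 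The main subtlety here, and the step I would flag as the real content, is bridging the \emph{absolutely} convex hull of the hypothesis with the plain convex hull of $(iv)$: this is exactly what the $\T$-invariance of the $\delta$-attainment sets provides, and without it one could only guarantee $\overline{\conv}(A)\subseteq\overline{\conv}(\att(G,\delta))$, which need not fill out $B_X$.
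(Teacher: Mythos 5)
Your proof is correct and follows exactly the route the paper intends: the corollary is stated without proof as the $r=1$ case of Proposition~\ref{prop:characterization-rgenerating}, and your matching of conditions (using $Z=Y$, the trivial inequalities $\|T\|_G\leq\|T\|$ and $\overline{\conv}(\att(G,\delta))\subseteq B_X$, and the non-nullity of $Y$) is precisely what is being left to the reader. Your verification of the ``in particular'' clause via the $\T$-invariance of $\att(G,\delta)$, which upgrades its closed convex hull to an absolutely convex set containing $\overline{\aconv}(A)$, correctly supplies the one detail the paper's statement glosses over.
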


In the next list we give the first easy examples of generating operators.

\begin{examples}\label{examples:preliminaryones}
$ $
\begin{enumerate}[(1)]
\item {\slshape The identity operator on every Banach space is generating.}
\item {\slshape Actually, all isometric embeddings are generating.}
\item {\slshape Spear operators are generating since, in this case, $v_G(T)=\|T\|$ for every $T\in\mathcal{L}(X,Y)$.}
\item {\slshape Actually, operators with the \emph{alternative Daugavet property} (i.e.\ those $G\in \mathcal{L}(X,Y)$ such that $v_G(T)=\|T\|$ for every $T\in \mathcal{L}(X,Y)$ with rank-one, cf.\ \cite[Section 3.2]{KMMP-Spear}) are also generating by using Corollary~\ref{corollary:characterization-generating} with $Z=Y$} in item (iii).
\item {\slshape The natural embedding $G$ of $\ell_1$ into $c_0$ is a generating operator.}\newline \indent
Indeed, for every $\delta>0$, we have that
$$
\att(G,\delta)=\left\{x\in S_{\ell_1}\colon \|Gx\|_\infty>1-\delta\right\}\supset \T\{ e_n\colon n\in\N\},
$$
so $\overline{\conv}(\att(G,\delta))=B_{\ell_1}$.
\item {\slshape The natural embedding $G$ of $L_\infty[0,1]$ into $L_1[0,1]$ is a generating operator.}\newline\indent
Indeed, for every $\delta>0$, notice that $B_{L_\infty[0,1]}=\overline{\conv}\left(\{f\in L_\infty[0,1] \colon \left|f(t)\right|=1 \ \textnormal{a.e.} \}\right)$ (this should be well known, but in any case it follows from  Lemma~\ref{lemma:L_infty(mu,Y)} which includes the vector-valued case). Observe then that, for every $f\in L_\infty[0,1]$ satisfying $\left|f(t)\right|=1$ \textnormal{a.e.}, it follows $\|f\|_\infty=\|G(f)\|_1=1$. So $\|G(f)\|_1=1$ and $f\in\att(G,\delta)$.
\end{enumerate}
\end{examples}

We will provide some more examples in classical Banach spaces in Subsection~\ref{subsect:classical}.

The next result deals with compact operators defined on a reflexive Banach space.

\begin{proposition}\label{prop:X-reflexive-G-compact}
	Let $X$ be a reflexive Banach space, let $Y$ be a Banach space, and let $G\in\mathcal{L}(X,Y)$ be a compact operator with $\|G\|=1$. Then,
	$$\bigcap_{\delta>0}\overline{\conv}(\att(G,\delta))=\overline{\conv}(\att(G)).$$
	Consequently, $G$ is $r$-generating if and only if $r B_X\subseteq \overline{\conv(\att(G))}$.
\end{proposition}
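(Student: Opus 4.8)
The plan is to prove the two inclusions of the set equality and then read off the consequence. The inclusion $\overline{\conv}(\att(G)) \subseteq \bigcap_{\delta>0}\overline{\conv}(\att(G,\delta))$ is immediate from $\att(G) \subseteq \att(G,\delta)$ for every $\delta>0$; here one first records that $\att(G)\neq\emptyset$, which holds because a compact operator on a reflexive space attains its norm (any sequence $x_n\in S_X$ with $\|Gx_n\|\to 1$ has a weakly convergent subsequence $x_{n_k}\rightharpoonup x$ with $\|x\|\leq 1$, and compactness gives $Gx_{n_k}\to Gx$ in norm, whence $\|Gx\|=1$ and thus $\|x\|=1$).

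For the reverse inclusion I would argue by separation. Write $C:=\overline{\conv}(\att(G))$, a closed, bounded, convex subset of the reflexive space $X$, hence weakly compact. Given $x_0\notin C$, Hahn--Banach produces $f\in X^*$ and $\beta\in\R$ with $\re f(x_0)>\beta\geq \sup_{x\in C}\re f(x)=\sup_{x\in\att(G)}\re f(x)=:s_0$. Setting $s(\delta):=\sup_{x\in\att(G,\delta)}\re f(x)$, it suffices to show $\inf_{\delta>0}s(\delta)=s_0$: then there is $\delta_0$ with $s(\delta_0)<\re f(x_0)$, which separates $x_0$ from $\overline{\conv}(\att(G,\delta_0))$ and hence witnesses $x_0\notin\bigcap_{\delta>0}\overline{\conv}(\att(G,\delta))$.

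The heart of the argument is the equality $\inf_{\delta>0}s(\delta)=s_0$, and I would establish it by contradiction. If $\inf_{\delta>0}s(\delta)>s_0$, choose $\eps>0$ and $x_n\in\att(G,1/n)$ with $\re f(x_n)>s_0+\eps$ for all $n$. Reflexivity yields a subsequence $x_{n_k}\rightharpoonup x$ with $\|x\|\leq 1$; compactness of $G$ gives $Gx_{n_k}\to Gx$ in norm, and since $\|Gx_{n_k}\|>1-1/n_k\to 1$ we get $\|Gx\|=1$, forcing $\|x\|=1$ and $x\in\att(G)$. Crucially, $f$ is weakly continuous, so $\re f(x)=\lim_k\re f(x_{n_k})\geq s_0+\eps>s_0\geq \re f(x)$, a contradiction. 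The consequence is then immediate: $G$ is $r$-generating if and only if $rB_X\subseteq\overline{\conv}(\att(G,\delta))$ for every $\delta>0$, i.e.\ if and only if $rB_X\subseteq\bigcap_{\delta>0}\overline{\conv}(\att(G,\delta))=\overline{\conv}(\att(G))$.

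I expect the main obstacle to be precisely the passage $\inf_{\delta>0}s(\delta)=s_0$. The naive attempt to show that $\att(G,\delta)$ lies within a norm-$\eps$ neighbourhood of $C$ for small $\delta$, and then intersect the neighbourhoods, fails: the witnessing points $x_n$ only converge \emph{weakly}, and the distance-to-$C$ function is merely weakly lower semicontinuous, which pushes the inequality in the wrong direction. Replacing the distance function by a single separating functional $f$ is what saves the argument, since $f\in X^*$ is weakly continuous and the weak limit $x$ lands in $\att(G)$ thanks to the compactness of $G$.
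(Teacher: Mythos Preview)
Your proof is correct and follows essentially the same route as the paper: separate a point outside $\overline{\conv}(\att(G))$ by a functional, build a sequence in $\att(G,1/n)$ on which the functional stays large, and use reflexivity plus compactness of $G$ to push a weak limit into $\att(G)$, contradicting the separation. The only cosmetic difference is that you package the key step as the equality $\inf_{\delta>0}s(\delta)=s_0$, whereas the paper starts from $x_0\in\bigcap_\delta\overline{\conv}(\att(G,\delta))$ and extracts the witnessing $y_n\in\att(G,1/n)$ via a convex-combination argument; the underlying idea and the use of weak continuity of the separating functional are identical.
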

\begin{proof}
	Let $x_0\in \bigcap_{\delta>0}\overline{\conv}(\att(G,\delta))$ and suppose that $x_0\notin \overline{\conv}(\att(G))$. Then there exist $x_0^*\in X^*$ and $\alpha> 0$ such that
	\begin{equation}\label{eq:B_G-X-reflexive}
		\sup_{x\in\overline{\conv}(\att(G))} \re x_0^*(x)< \alpha\leq\re x_0^*(x_0).
	\end{equation}
	Fix $\eps>0$. Given $n\in\N$, since $x_0\in \overline{\conv}\left(\att\left(G,\frac{1}{n}\right)\right)$, we may find $m\in\N$, $y_1,\ldots,y_m\in \att\left(G,\frac{1}{n}\right)$, and $\lambda_1,\ldots,\lambda_m\in[0,1]$ with $\sum_{k=1}^m\lambda_k=1$ such that
	$$ \left\|x_0-\sum_{k=1}^m\lambda_k y_k\right\|<\eps,$$
	hence
	$$\alpha-\eps\leq\re x_0^*(x_0)-\eps<\sum_{k=1}^m \lambda_k \re x_0^*(y_k).$$
	By convexity, there is $k_0\in\{1,\ldots,m\} $ such that $\re x_0^*(y_{k_0})\geq\alpha-\eps$. Repeating this argument for each $n\in\N$, we obtain a sequence $\{y_n\}$ in $B_X$ such that $\re x_0^*(y_n)>\alpha-\eps$ and $\|Gy_n\|> 1-\frac{1}{n}$ for every $n\in\N$. Now, using that $B_X$ is weakly compact by Dieudonn\'{e}'s theorem, we obtain a subsequence $\{y_{\sigma(n)}\}$ of $\{y_n\}$ which is weakly convergent to some $y_0\in B_X$. Then, by the arbitrariness of $\eps$ and the compactness of $G$ we have that $$\re x_0^*(y_0)\geq\alpha \qquad \textnormal{and} \qquad \|Gy_0\|=1,$$
	which contradicts \eqref{eq:B_G-X-reflexive}.
\end{proof}

Clearly, the previous result applies when $X$ is finite-dimensional.

\begin{corollary}\label{cor:char-finite-dimensional}
	Let $X$ be a finite-dimensional space, let $Y$ be a Banach space, and let $G\in\mathcal{L}(X,Y)$ with $\|G\|=1$. Then,
	$$\bigcap_{\delta>0}\overline{\conv}(\att(G,\delta))=\conv(\att(G)).$$
	Consequently, $G$ is $r$-generating if and only if $r B_X\subseteq \conv(\att(G))$.
\end{corollary}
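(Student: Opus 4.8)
The plan is to obtain the corollary as a direct specialization of Proposition~\ref{prop:X-reflexive-G-compact}, so that the whole argument reduces to two standard observations about finite dimensions. First I would note that a finite-dimensional space is reflexive and that every bounded linear operator with finite-dimensional domain is automatically compact, since bounded subsets of $X$ are relatively compact. Hence the hypotheses of Proposition~\ref{prop:X-reflexive-G-compact} are satisfied, and that proposition already gives
$$\bigcap_{\delta>0}\overline{\conv}(\att(G,\delta)) = \overline{\conv}(\att(G)).$$

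The only remaining point is to remove the closure on the right-hand side, i.e.\ to check that $\overline{\conv}(\att(G)) = \conv(\att(G))$. For this I would observe that $\att(G)$ is compact: it is the set of points $x$ in the compact sphere $S_X$ at which the continuous function $x\mapsto \|Gx\|$ attains the value $1=\|G\|$, and so it is a closed subset of a compact set. (In particular $\att(G)\neq\emptyset$, because $\|Gx\|$ attains its maximum $\|G\|=1$ on the compact sphere $S_X$.) By Carath\'{e}odory's theorem the convex hull of a compact set in a finite-dimensional space is again compact, hence closed; thus $\conv(\att(G))$ is already closed and the closure is superfluous. Combining the two steps yields the displayed identity.

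The \emph{consequently} clause is then immediate from the definition: $G$ is $r$-generating precisely when $rB_X\subseteq \overline{\conv}(\att(G,\delta))$ for every $\delta>0$, which is equivalent to $rB_X\subseteq \bigcap_{\delta>0}\overline{\conv}(\att(G,\delta)) = \conv(\att(G))$. I do not anticipate any genuine obstacle here, as the substantive content lies entirely in Proposition~\ref{prop:X-reflexive-G-compact}; the only mild care required is the elementary compactness-of-the-convex-hull fact (Carath\'{e}odory) that legitimizes the passage from $\overline{\conv}$ to $\conv$ in the finite-dimensional setting.
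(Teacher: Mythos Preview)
Your proposal is correct and follows exactly the approach intended by the paper: the corollary is stated immediately after Proposition~\ref{prop:X-reflexive-G-compact} with the remark that it ``clearly'' applies in finite dimensions, and you have simply spelled out the two routine points (finite-dimensional $\Rightarrow$ reflexive and every operator compact; $\att(G)$ compact $\Rightarrow$ $\conv(\att(G))$ closed via Carath\'{e}odory) that justify dropping the closure.
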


The next result characterizes those operators acting from a finite-dimensional space which are $r$-generating for some $0<r\leq 1$.

\begin{proposition}\label{prop:characterization-rGenerating-Xfindim}
Let $X$ be a Banach space with $\dim(X)=n$, let $Y$ be a Banach space, and let $G\in \mathcal{L}(X,Y)$ with $\|G\|=1$. The following are equivalent:
\begin{enumerate}[$(i)$]
\item $G$ is $r$-generating for some $r\in (0,1]$.
\item The set $\att(G)$ contains $n$ linearly independent elements.
\end{enumerate}
\end{proposition}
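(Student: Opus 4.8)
The plan is to reduce everything to the finite-dimensional characterization already available and then to exploit the symmetry of the attainment set. Since $X$ is finite-dimensional, $S_X$ is compact and $G$ attains its norm, so $\att(G)\neq\emptyset$; moreover Corollary~\ref{cor:char-finite-dimensional} tells us that $G$ is $r$-generating if and only if $rB_X\subseteq\conv(\att(G))$. So the whole statement becomes the geometric assertion that $\conv(\att(G))$ contains a ball $rB_X$ for some $r>0$ precisely when $\att(G)$ contains $n$ linearly independent elements. The crucial structural feature I would record first is that $\att(G)$ is $\T$-invariant: if $x\in\att(G)$ and $\theta\in\T$, then $\|\theta x\|=1$ and $\|G(\theta x)\|=|\theta|\,\|Gx\|=1$, so $\theta x\in\att(G)$; in particular $\att(G)$ is symmetric, $-\att(G)=\att(G)$.

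For $(ii)\Rightarrow(i)$ I would argue as follows. Let $x_1,\dots,x_n\in\att(G)$ be linearly independent, hence a basis of $X$. By symmetry $\pm x_1,\dots,\pm x_n\in\att(G)$. Consider the linear isomorphism $T\in\mathcal{L}(\ell_1^n,X)$ determined by $Te_i=x_i$. Since $B_{\ell_1^n}=\conv\{\pm e_1,\dots,\pm e_n\}$, we get $T(B_{\ell_1^n})=\conv\{\pm x_1,\dots,\pm x_n\}\subseteq\conv(\att(G))$. As $T$ is an isomorphism, $T^{-1}$ is bounded, so there is $r>0$ with $rB_X\subseteq T(B_{\ell_1^n})\subseteq\conv(\att(G))$, and Corollary~\ref{cor:char-finite-dimensional} yields that $G$ is $r$-generating. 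The point worth emphasizing here is that the symmetry of $\att(G)$ is exactly what guarantees that the convex hull of the independent vectors contains $0$ in its interior; the image of $B_{\ell_1^n}$ is the natural ``cross-polytope'' realizing this.

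For $(i)\Rightarrow(ii)$, assume $rB_X\subseteq\conv(\att(G))$ for some $r>0$. Then $\conv(\att(G))$ has nonempty interior in $X$, so its linear span is all of $X$; equivalently, $\att(G)$ cannot be contained in any proper subspace, since such a subspace has empty interior. Therefore $\operatorname{span}(\att(G))=X$, and one can extract $n$ linearly independent vectors from $\att(G)$, giving $(ii)$. I do not expect a genuine obstacle in this proposition: once the reduction through Corollary~\ref{cor:char-finite-dimensional} and the $\T$-invariance of $\att(G)$ are in place, both implications are short; the only subtlety to state carefully is that the direction $(ii)\Rightarrow(i)$ uses symmetry in an essential way, as $n$ independent points alone would not force a neighborhood of the origin into their convex hull.
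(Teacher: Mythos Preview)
Your proof is correct and follows essentially the same route as the paper: both reduce via Corollary~\ref{cor:char-finite-dimensional} to showing that $\conv(\att(G))$ contains a ball iff $\att(G)$ spans $X$, and both exploit that $\att(G)$ is balanced to push the convex hull of $n$ independent attainment vectors to a neighbourhood of the origin (the paper phrases this via the Minkowski functional of $\conv(\att(G))$, you via the image of $B_{\ell_1^n}$ under an isomorphism---these are the same idea). One small caveat: your identity $B_{\ell_1^n}=\conv\{\pm e_1,\dots,\pm e_n\}$ is only correct over $\R$; over $\C$ you need $B_{\ell_1^n}=\conv\{\theta e_i:\theta\in\T,\ i=1,\dots,n\}$, but since you already recorded the full $\T$-invariance of $\att(G)$ the argument goes through unchanged.
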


\begin{proof}
$(i)\Rightarrow (ii)$. By Corollary~\ref{cor:char-finite-dimensional}, we have that
$rB_X\subseteq \conv(\att(G)).$ Therefore, $\att(G)$ contains $n$ linearly independent elements.

$(ii)\Rightarrow (i)$. We start proving that the set $\conv(\att(G))$ is absorbing. Indeed, let $\{x_1,\ldots, x_n\}$ be a linearly independent subset of $\att(G)$. Then, fixed $0\neq x\in X$, there are $\lambda_1,\ldots,\lambda_n \in\K$ such that $x=\sum_{j=1}^n\lambda_j x_j$. Calling $0<\rho=\sum_{j=1}^n|\lambda_j|$ we can write
\begin{align*}
x=\sum_{k=1}^n\lambda_j x_j &= \sum_{\lambda_j\neq 0} |\lambda_j|\frac{\lambda_j}{|\lambda_j|} x_j= \rho \sum_{\lambda_j\neq 0} \frac{|\lambda_j|}{\rho}\frac{\lambda_j}{|\lambda_j|} x_j \in \rho \conv(\att(G))
\end{align*}
where we used that $\frac{\lambda_j}{|\lambda_j|} x_j \in\att(G)$ as this set is balanced.
Hence, the set $\conv(\att(G))$ is absorbing. Besides, $\conv(\att(G))$ is clearly balanced, convex, and compact. So its Minkowski functional defines a norm on $X$ which must be equivalent to the original one. Then, there is $r>0$ such that $rB_X\subseteq \conv(\att(G))$ and, therefore, $G$ is $r$-generating by Corollary~\ref{cor:char-finite-dimensional}.
\end{proof}

We next would like to present the relationship of generating operators with denting points (and so with the Radon-Nikod\'{y}m property, RNP in short). We need some notation. Let $A$ be a bounded closed convex set. Recall that $x_0\in A$ is a \emph{denting point} if for every $\delta>0$ $x_0\notin\overline{\conv}(A\setminus B(x_0,\delta))$ or, equivalently, if $x_0$ belongs to slices of $B_X$ of arbitrarily small diameter. We write $\dent(A)$ to denote the set of denting points of $A$. A closed convex subset $C$ of $X$ has the \emph{Radon-Nikod\'{y}m property} (\emph{RNP} in short), if all of its closed convex bounded subsets contain denting points or, equivalently, if all of its closed convex bounded subsets are equal to the closed convex hull of their denting points. In particular, the whole space $X$ may also have this property.

The following result tells us that generating operators must attain their norms on every denting point.

\begin{lemma}\label{lemma:denting-points}
Let $X$, $Y$ be Banach spaces and let $G\in\mathcal{L}(X,Y)$ be a (norm-one) generating operator. If $x_0\in\dent(B_X)$, then $\|Gx_0\|=1$.
\end{lemma}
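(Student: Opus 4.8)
The plan is to exploit the defining property of a denting point—namely, that $x_0$ admits slices of $B_X$ of arbitrarily small diameter—together with the characterization of generating operators from Corollary~\ref{corollary:characterization-generating}$(iv)$, which tells us that $\overline{\conv}(\att(G,\delta))=B_X$ for every $\delta>0$. The idea is that $x_0$ must be well approximated by convex combinations of elements of $\att(G,\delta)$, but because $x_0$ is denting, all of those elements must themselves be close to $x_0$, and hence must have $\|G(\cdot)\|$ close to $1$.

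More precisely, fix $\eps>0$. Since $x_0\in\dent(B_X)$, we may choose $x^*\in S_{X^*}$ and $\alpha>0$ so that the slice $S=\Slice(B_X,x^*,\alpha)$ has diameter less than $\eps$ and contains $x_0$; in particular $\re x^*(x_0)>\sup_{B_X}\re x^* - \alpha = 1-\alpha$. Now fix any $\delta>0$. By Corollary~\ref{corollary:characterization-generating}, $x_0\in B_X=\overline{\conv}(\att(G,\delta))$, so we can write $x_0$ as a limit of convex combinations $\sum_{k}\lambda_k y_k$ with $y_k\in\att(G,\delta)$; choosing the approximation close enough that $\re x^*\left(\sum_k \lambda_k y_k\right)>1-\alpha$, a convexity argument (exactly as in the proof of Proposition~\ref{prop:X-reflexive-G-compact}) produces at least one index $k_0$ with $\re x^*(y_{k_0})>1-\alpha$, that is, with $y_{k_0}\in S$. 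Since $\diam(S)<\eps$ and $x_0\in S$, we get $\|y_{k_0}-x_0\|<\eps$, while $y_{k_0}\in\att(G,\delta)$ gives $\|Gy_{k_0}\|>1-\delta$.

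Combining these, for every $\eps>0$ and every $\delta>0$ there is a point $y$ with $\|y-x_0\|<\eps$ and $\|Gy\|>1-\delta$, whence
$$
\|Gx_0\|\geq \|Gy\|-\|G\|\,\|y-x_0\| > 1-\delta-\eps.
$$
Letting $\eps\to 0$ and $\delta\to 0$ yields $\|Gx_0\|\geq 1$, and since $\|G\|=1$ and $x_0\in S_X$ we conclude $\|Gx_0\|=1$.

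The only delicate point is making sure the slice witnessing the denting property can be chosen to satisfy $\re x^*(x_0)>1-\alpha$ simultaneously with small diameter; this is automatic from the equivalent formulation of a denting point as a point lying in slices of arbitrarily small diameter, since any such slice $\Slice(B_X,x^*,\alpha)$ containing $x_0$ already forces $\re x^*(x_0)>1-\alpha$. Everything else is a routine convexity extraction, and I do not anticipate a genuine obstacle here.
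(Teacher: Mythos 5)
Your argument is correct, but it takes a different route from the paper's proof of this lemma. The paper works directly with the first definition of denting point, namely that $x_0\notin\overline{\conv}\bigl(B_X\setminus B(x_0,r)\bigr)$ for every $r>0$: it shows by contradiction that $x_0\in\overline{\att(G,\delta)}$ for each $\delta>0$ (if not, $\att(G,\delta)\subseteq B_X\setminus B(x_0,r)$ for some $r>0$, so $x_0\in B_X=\overline{\conv}(\att(G,\delta))\subseteq\overline{\conv}\bigl(B_X\setminus B(x_0,r)\bigr)$, contradicting dentability), which immediately yields $\|Gx_0\|\geq 1-\delta$. You instead use the equivalent small-diameter-slice characterization together with the observation that $\att(G,\delta)$ must meet every slice of $B_X$ because its closed convex hull is all of $B_X$; this is precisely the argument the paper reserves for the more general Lemma~\ref{lemma:fragmenting-points} on points of sliced fragmentability. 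So your proof is sound and in fact establishes (essentially verbatim) that stronger statement, where $x_0$ need not lie in the slice itself but the slice only has to be contained in $x_0+\delta B_X$. What the paper's version buys is brevity: the convexity-extraction step is absorbed into a one-line contradiction. What yours buys is the extra generality, at the cost of having to justify that slices containing $x_0$ of small diameter exist and that $\att(G,\delta)$ intersects each of them.
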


\begin{proof}
	Given $\delta>0$, observe that $x_0\in\overline{\att(G,\delta)}$. Otherwise, there would exist $r>0$ such that $B(x_0,r)\cap \att(G,\delta)=\emptyset$, so $\att(G,\delta)\subseteq B_X\setminus B(x_0,r)$ and
	$$
	x_0\in B_X=\overline{\conv}(\att(G,\delta))\subseteq \overline{\conv}( B_X\setminus B(x_0,r))
	$$
	which contradicts $x_0$ being a denting point of $B_X$. Consequently, $\|Gx_0\|\geq 1-\delta$ and the arbitrariness of $\delta$ finishes the proof.
\end{proof}

The above result can be slightly improved by using the following definition.

\begin{definition} \label{def-fragmenting-points} Let $x_0 \in S_X$. We say that $x_0$ is a point of \emph{sliced fragmentability} if for every $\delta > 0$ there is a  slice $S_\delta$ of $B_X$ such that $S_\delta \subset x_0 + \delta B_X$.
\end{definition}

Observe that this notion is weaker than that of denting point (for instance, points in the closure of the set of denting points are of sliced fragmentability but they do not need to be denting, even in the finite-dimensional case).

\begin{lemma}\label{lemma:fragmenting-points}
Let $X$, $Y$ be Banach spaces, let $G\in S_{\mathcal{L}(X,Y)}$ be a generating operator, and let $x_0 \in S_X$ be a point of sliced fragmentability, then $\|Gx_0\|=1$.
\end{lemma}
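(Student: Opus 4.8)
The plan is to mimic the proof of Lemma~\ref{lemma:denting-points}, replacing the denting-point argument with the one-sided information provided by sliced fragmentability. The essential point to extract from Lemma~\ref{lemma:denting-points} is that being generating forces $x_0$ to lie in the closure of $\att(G,\delta)$ for every $\delta>0$; once this is established, continuity of $x\mapsto\|Gx\|$ gives $\|Gx_0\|\geq 1-\delta$ and letting $\delta\to 0$ finishes the proof. So the whole task reduces to showing: \emph{if $x_0$ is a point of sliced fragmentability, then $x_0\in\overline{\att(G,\delta)}$ for every $\delta>0$.}

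First I would fix $\delta>0$ and suppose, for contradiction, that $x_0\notin\overline{\att(G,\delta)}$. Then there is $r>0$ with $B(x_0,r)\cap\att(G,\delta)=\emptyset$, hence $\att(G,\delta)\subseteq B_X\setminus B(x_0,r)$. Next I would invoke the sliced fragmentability of $x_0$ to choose a slice $S_{r/2}=\Slice(B_X,f,\alpha)$ with $S_{r/2}\subseteq x_0+\tfrac{r}{2}B_X$. Since $G$ is generating, Corollary~\ref{corollary:characterization-generating} gives $\overline{\conv}(\att(G,\delta))=B_X$; in particular $S_{r/2}$, being a nonempty relatively open subset of $B_X$, must meet $\overline{\conv}(\att(G,\delta))$. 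The point I want to exploit is that a slice is not merely a subset of $B_X$ but is cut off by a half-space, so $\overline{\conv}(\att(G,\delta))=B_X$ forces $\att(G,\delta)$ itself to intersect the corresponding half-space arbitrarily deeply. Concretely, $\sup_{x\in\att(G,\delta)}\re f(x)=\sup_{x\in B_X}\re f(x)$, because passing to the closed convex hull does not change the supremum of a continuous linear functional; therefore there exists $x_1\in\att(G,\delta)$ with $\re f(x_1)>\sup_{B_X}\re f-\alpha$, i.e.\ $x_1\in S_{r/2}\subseteq x_0+\tfrac{r}{2}B_X$. This yields $\|x_1-x_0\|\leq\tfrac{r}{2}<r$ with $x_1\in\att(G,\delta)$, contradicting $B(x_0,r)\cap\att(G,\delta)=\emptyset$.

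Having reached the contradiction, I conclude $x_0\in\overline{\att(G,\delta)}$ for every $\delta>0$, whence $\|Gx_0\|\geq 1-\delta$ for every $\delta>0$ by continuity of the norm, and so $\|Gx_0\|=1$, which is the assertion.

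The main obstacle, and the only genuinely new ingredient over Lemma~\ref{lemma:denting-points}, is the middle step translating ``$S$ is a slice contained in a small ball'' into ``$\att(G,\delta)$ meets that small ball.'' The clean way to do this is the observation that taking closed convex hull preserves the supremum of $\re f$, so the slice of $B_X$ determined by $f$ and $\alpha$ always contains points of $\att(G,\delta)$ itself; one must only be mildly careful that $\alpha>0$ is chosen as in the definition of the slice $S_{r/2}$ rather than recomputed. Everything else is a verbatim adaptation of the denting-point argument, so I do not anticipate further difficulties.
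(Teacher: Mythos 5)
Your proof is correct and is essentially the paper's argument: the key step in both is that $\overline{\conv}(\att(G,\delta))=B_X$ forces $\att(G,\delta)$ to meet every slice of $B_X$ (because closed convex hulls preserve suprema of real-linear functionals), applied to the slice provided by sliced fragmentability. The only cosmetic difference is that you package this as a contradiction showing $x_0\in\overline{\att(G,\delta)}$, whereas the paper directly picks $x_\delta\in S_\delta\cap\att(G,\delta)$ and estimates $\|Gx_0\|\geq\|Gx_\delta\|-\|G(x_0-x_\delta)\|\geq 1-2\delta$.
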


\begin{proof} Fixed $\delta>0$, by our assumption, $\overline{\conv} (\att(G, \delta)) = B_X$ for every $\delta >0$. This implies that, fixed $\delta>0$, the set $\att(G,\delta)$ intersects every slice of $B_X$. Applying this to the slice  $S_\delta$  from Definition~\ref{def-fragmenting-points}, we obtain that  there is a point $x_\delta \in S_\delta \cap \att(G,\delta)$. For this $x_\delta$, we have  $\|x_0 - x_\delta\| < \delta$ and $\|Gx_\delta\|> 1 - \delta$.  Consequently,
$$
\|Gx_0\| \geq \|Gx_\delta\| - \|G(x_0 - x_\delta)\|  \geq 1 - 2\delta
$$
and the arbitrariness of $\delta$ finishes the proof.
\end{proof}

We do not know if Lemma~\ref{lemma:fragmenting-points} is a characterization, but in Proposition~\ref{prop:charact-point-of-norm-attainment-Gen-op} we will characterize those points on which every generating operator attains its norm.

\begin{proposition}\label{prop:B_X-dentable}
Let $X$, $Y$ be Banach spaces and let $G\in\mathcal{L}(X,Y)$ be a norm-one operator. Suppose that $B_X=\overline{\conv}(\dent(B_X))$. Then, $G$ is generating if and only if $\|Gx\|=1$ for every $x\in \dent(B_X)$.
\end{proposition}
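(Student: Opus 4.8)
The plan is to prove the two implications separately, leveraging the results already established in the excerpt. The forward implication is immediate: if $G$ is generating and $x_0\in\dent(B_X)$, then Lemma~\ref{lemma:denting-points} directly yields $\|Gx_0\|=1$, so nothing further is required for that direction.

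For the converse, I would assume that $\|Gx\|=1$ for every $x\in\dent(B_X)$ and aim to verify condition $(iv)$ of Corollary~\ref{corollary:characterization-generating}, namely that $\overline{\conv}(\att(G,\delta))=B_X$ for every $\delta>0$. First I would record the elementary but crucial observation that every denting point of $B_X$ lies on $S_X$, being in particular an extreme point of the unit ball. Consequently, for a fixed $\delta>0$ and any $x\in\dent(B_X)$, we have $x\in S_X$ together with $\|Gx\|=1>1-\delta$, which is exactly the statement that $x\in\att(G,\delta)$. Hence $\dent(B_X)\subseteq\att(G,\delta)$ for every $\delta>0$.

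Taking closed convex hulls and invoking the hypothesis $B_X=\overline{\conv}(\dent(B_X))$, I obtain
$$
B_X=\overline{\conv}(\dent(B_X))\subseteq\overline{\conv}(\att(G,\delta))\subseteq B_X,
$$
so that $\overline{\conv}(\att(G,\delta))=B_X$ for every $\delta>0$. By Corollary~\ref{corollary:characterization-generating} this shows that $G$ is generating, completing the argument.

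I expect no serious obstacle here: the proposition is essentially a repackaging of Lemma~\ref{lemma:denting-points} (for the forward direction) together with the convex-hull characterization of Corollary~\ref{corollary:characterization-generating} (for the converse). The only point deserving minimal care is the verification that denting points belong to each set $\att(G,\delta)$, which is precisely why the remark that denting points lie on the unit sphere is needed before passing to closed convex hulls.
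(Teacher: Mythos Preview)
Your proof is correct and follows essentially the same approach as the paper: the forward direction is Lemma~\ref{lemma:denting-points}, and the converse uses the inclusion $\dent(B_X)\subset\att(G,\delta)$ together with Corollary~\ref{corollary:characterization-generating}.iv and the hypothesis $B_X=\overline{\conv}(\dent(B_X))$. Your explicit remark that denting points lie on $S_X$ is a small clarifying detail that the paper leaves implicit.
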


\begin{proof}
	If $\|Gx\|=1$ for every $x\in \dent(B_X)$, then  $\dent(B_X)\subset \att(G,\delta)$ for every $\delta>0$ and, therefore, $G$ is generating by Corollary~\ref{corollary:characterization-generating}.iv as $B_X=\overline{\conv}(\dent(B_X))$. The converse result follows from Lemma~\ref{lemma:denting-points}.
\end{proof}

\begin{corollary}\label{cor:domain-RNP}
Let $X$, $Y$ be Banach spaces and let $G\in\mathcal{L}(X,Y)$ be a norm-one operator. Suppose that $X$ has the Radon-Nikod\'{y}m property. Then, $G$ is generating if and only if $\|Gx\|=1$ for every $x\in \dent(B_X)$.
\end{corollary}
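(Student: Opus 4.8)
The plan is to recognize that Corollary~\ref{cor:domain-RNP} is essentially an immediate consequence of Proposition~\ref{prop:B_X-dentable} once we supply the missing hypothesis, namely $B_X = \overline{\conv}(\dent(B_X))$. So the whole argument reduces to verifying that the Radon--Nikod\'ym property of $X$ forces the unit ball to be the closed convex hull of its denting points, and then quoting the previous proposition verbatim.

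First I would recall the definition of RNP as stated in the excerpt: a closed convex subset $C$ of $X$ has the RNP if every closed convex bounded subset of $C$ is equal to the closed convex hull of its denting points, and the whole space $X$ is allowed to have this property. The key observation is that $B_X$ is itself a closed convex bounded subset of $X$. Hence, taking $C = X$ and applying the definition to the particular bounded closed convex subset $B_X$, we obtain directly that $B_X = \overline{\conv}(\dent(B_X))$.

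Then I would simply invoke Proposition~\ref{prop:B_X-dentable} with this identity in hand: since $X$ has the RNP we have $B_X = \overline{\conv}(\dent(B_X))$, so by that proposition $G$ is generating if and only if $\|Gx\| = 1$ for every $x \in \dent(B_X)$. This is exactly the assertion to be proved.

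The only point that requires any care — and it is a bookkeeping matter rather than a real obstacle — is to make sure the definition of RNP being used genuinely yields the equality $B_X = \overline{\conv}(\dent(B_X))$ for the full unit ball and not merely the existence of some denting points. The definition quoted in the paper explicitly includes the ``equivalently'' clause phrasing RNP in terms of every bounded closed convex subset being the closed convex hull of its denting points, so this equivalence is available without further work; there is no deeper difficulty here, and the proof amounts to a one-line reduction to Proposition~\ref{prop:B_X-dentable}.

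\begin{proof}
Since $X$ has the Radon--Nikod\'ym property and $B_X$ is a closed convex bounded subset of $X$, the definition of the RNP gives $B_X=\overline{\conv}(\dent(B_X))$. The result now follows immediately from Proposition~\ref{prop:B_X-dentable}.
\end{proof}
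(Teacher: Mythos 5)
Your proof is correct and is exactly the intended argument: the paper presents this corollary as an immediate consequence of Proposition~\ref{prop:B_X-dentable}, using the stated definition of the RNP to get $B_X=\overline{\conv}(\dent(B_X))$. No issues.
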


In the finite-dimensional case, the RNP is for free and denting points and extreme points coincide. Therefore, the following particular case holds.

\begin{corollary}\label{cor:X-finite-dimension}
Let $X$ be a finite-dimensional space, let $Y$ be a Banach space, and let $G\in\mathcal{L}(X,Y)$ be a norm-one operator. Then, $G$ is generating if and only if $\|Gx\|=1$ for every $x\in \ext(B_X)$.
\end{corollary}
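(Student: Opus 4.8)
The plan is to deduce this corollary directly from Proposition~\ref{prop:B_X-dentable} together with the two standard facts about finite-dimensional spaces mentioned in the text: that every finite-dimensional Banach space has the Radon-Nikod\'{y}m property (so in particular $B_X$ is the closed convex hull of its denting points), and that for a compact convex set the denting points coincide with the extreme points. Granting these, the hypothesis $B_X=\overline{\conv}(\dent(B_X))$ of Proposition~\ref{prop:B_X-dentable} is automatically satisfied, and the equivalence there reads ``$G$ is generating iff $\|Gx\|=1$ for every $x\in\dent(B_X)$''; replacing $\dent(B_X)$ by $\ext(B_X)$ gives exactly the statement.

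Concretely, I would first argue that $\dent(B_X)=\ext(B_X)$ when $\dim(X)<\infty$. The inclusion $\dent(B_X)\subseteq\ext(B_X)$ holds in every Banach space, since a denting point cannot be a proper convex combination of other points of the ball (it is separated from the closed convex hull of the rest of the ball by small slices). For the reverse inclusion, I would use that in a finite-dimensional space $B_X$ is norm-compact and that on a compact set the weak and norm topologies coincide; hence every extreme point is exposed-by-slices in the sense required, i.e.\ it lies in slices of arbitrarily small diameter, which is precisely the denting condition. (Alternatively, one invokes directly the known fact that in finite dimensions extreme points of a compact convex set are denting, which is what the preceding remark in the text asserts.) With this equality in hand, and with $B_X=\overline{\conv}(\ext(B_X))=\overline{\conv}(\dent(B_X))$ by the Minkowski--Carath\'{e}odory theorem, all hypotheses of Proposition~\ref{prop:B_X-dentable} are met.

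The remaining step is purely a substitution: apply Proposition~\ref{prop:B_X-dentable}, which yields that $G$ is generating if and only if $\|Gx\|=1$ for every $x\in\dent(B_X)$, and then rewrite $\dent(B_X)$ as $\ext(B_X)$ using the coincidence established above. This closes the argument without any further computation.

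I do not expect a genuine obstacle here, as the corollary is a clean specialization. The only point requiring a little care is justifying $\ext(B_X)\subseteq\dent(B_X)$ in finite dimensions rather than taking it for granted; this is where the compactness of $B_X$ is essential, since in infinite-dimensional spaces extreme points need not be denting. Everything else is a direct invocation of the already-proved Proposition~\ref{prop:B_X-dentable}.
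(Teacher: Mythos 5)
Your proposal is correct and follows essentially the same route as the paper, which obtains this corollary directly from Proposition~\ref{prop:B_X-dentable} (via Corollary~\ref{cor:domain-RNP}) by observing that in finite dimensions the RNP is automatic and that denting points and extreme points of $B_X$ coincide. The additional detail you supply --- norm-compactness of $B_X$ together with the fact that an extreme point of a compact convex set lies in slices of arbitrarily small diameter --- is precisely the justification the paper leaves implicit.
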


The following particular case of Corollary~\ref{cor:domain-RNP} is especially interesting.

\begin{example}\label{exa:ell_1}
{\slshape Let $Y$ be a Banach space and let $G\in\mathcal{L}(\ell_1,Y)$ be a norm-one operator. Then, $G$ is generating if and only if $\|Ge_n\|=1$ for every $n\in\N$.}	
\end{example}

When every point of the unit sphere of the domain is a denting point, Proposition~\ref{prop:B_X-dentable} tells us that generating operators are isometric embeddings. Spaces with such property of the unit sphere are average locally uniformly rotund (ALUR for short) spaces. They were introduced in \cite{Troyanski} and it can be deduced from \cite[Theorem]{LinLinTroy} that a Banach space is ALUR if and only if every point of the unit sphere is a denting point.

\begin{corollary}\label{cor:domain-LUR}
Let $X$, $Y$ be Banach spaces and suppose that $X$ is ALUR. Then, every generating operator $G\in\mathcal{L}(X,Y)$ is an isometric embedding.
\end{corollary}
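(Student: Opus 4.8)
The plan is to deduce Corollary~\ref{cor:domain-LUR} directly from Proposition~\ref{prop:B_X-dentable} together with the characterization of ALUR spaces mentioned in the text. First I would recall that, by the cited consequence of \cite[Theorem]{LinLinTroy}, the hypothesis that $X$ is ALUR is equivalent to the statement that every point of $S_X$ is a denting point of $B_X$; that is, $\dent(B_X)=S_X$. In particular, $B_X=\overline{\conv}(S_X)=\overline{\conv}(\dent(B_X))$, so the hypothesis of Proposition~\ref{prop:B_X-dentable} is automatically satisfied.

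Next I would apply Proposition~\ref{prop:B_X-dentable} to the given generating operator $G$. Since $G$ is generating and $B_X=\overline{\conv}(\dent(B_X))$, that proposition yields $\|Gx\|=1$ for every $x\in\dent(B_X)$. Because $\dent(B_X)=S_X$, this says precisely that $\|Gx\|=1$ for every $x\in S_X$. Combined with $\|G\|=1$, this means $G$ preserves norms on the unit sphere, and by homogeneity $\|Gx\|=\|x\|$ for all $x\in X$; hence $G$ is an isometric embedding, which is the desired conclusion.

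The only step requiring any care is the invocation of the ALUR characterization, since the equivalence ``ALUR $\iff$ every point of $S_X$ is denting'' is quoted rather than proved in the excerpt; once that equivalence is granted, the proof is essentially a one-line combination of the definition of ALUR with Proposition~\ref{prop:B_X-dentable}. I do not anticipate a genuine obstacle here, as the substantive content has already been established in the earlier results. A clean way to write the proof is as follows.

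\begin{proof}
By \cite[Theorem]{LinLinTroy}, the space $X$ is ALUR if and only if every point of $S_X$ is a denting point of $B_X$, that is, $\dent(B_X)=S_X$. In particular, $B_X=\overline{\conv}(S_X)=\overline{\conv}(\dent(B_X))$, so Proposition~\ref{prop:B_X-dentable} applies. Since $G$ is generating, that result gives $\|Gx\|=1$ for every $x\in\dent(B_X)=S_X$. By homogeneity, $\|Gx\|=\|x\|$ for every $x\in X$, so $G$ is an isometric embedding.
\end{proof}
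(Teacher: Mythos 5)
Your proof is correct and follows exactly the route the paper intends: the ALUR hypothesis gives $\dent(B_X)=S_X$ via \cite[Theorem]{LinLinTroy}, and then Proposition~\ref{prop:B_X-dentable} (whose relevant direction is just Lemma~\ref{lemma:denting-points}) yields $\|Gx\|=1$ on all of $S_X$, hence $G$ is an isometric embedding by homogeneity. This matches the paper's own (one-line) justification.
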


The next result gives another useful characterization of $r$-generating operators.

\begin{theorem}\label{thm:charact-rGenerating-spear-set}
Let $X$, $Y$ be Banach spaces, let $G\in\mathcal{L}(X,Y)$ be a norm-one operator, let $r\in(0,1]$, and let $\mathcal{A}\subset B_{Y^*}$ such that $\overline{\aconv}^{w^*}(\mathcal{A})=B_{Y^*}$. Then,
$G$ is $r$-generating if and only if $\displaystyle \max_{\theta\in\T}\sup_{y^*\in \mathcal{A}}\|G^*(y^*)+\theta x^*\|\geq1+r\|x^*\|$ for every $x^*\in X^*$.
\end{theorem}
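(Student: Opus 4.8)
The plan is to reduce the statement, via Proposition~\ref{prop:characterization-rgenerating}, to an inequality purely about the dual norm, and for this the first step is to compute the left-hand side explicitly. I claim that for every $x^*\in X^*$,
\[
\max_{\theta\in\T}\sup_{y^*\in\mathcal{A}}\|G^*(y^*)+\theta x^*\|=\sup_{x\in S_X}\bigl(\|Gx\|+|x^*(x)|\bigr)=:\Phi(x^*).
\]
The inequality $\leq$ follows from $\|G^*(y^*)+\theta x^*\|=\sup_{x\in B_X}|y^*(Gx)+\theta x^*(x)|\leq\sup_{x\in B_X}(\|Gx\|+|x^*(x)|)=\Phi(x^*)$ (the suprema over $B_X$ and $S_X$ agree by homogeneity), together with $\|y^*\|\leq1$. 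For the reverse inequality I fix $x\in S_X$ and $\eps>0$; since $\overline{\aconv}^{w^*}(\mathcal{A})=B_{Y^*}$ means $\mathcal{A}$ is one-norming, there is $y^*\in\mathcal{A}$ with $|y^*(Gx)|>\|Gx\|-\eps$, and then a single $\theta\in\T$ can be chosen so that $\theta x^*(x)$ and $y^*(Gx)$ have the same argument; evaluating $G^*(y^*)+\theta x^*$ at $x$ then gives $|y^*(Gx)+\theta x^*(x)|=|y^*(Gx)|+|x^*(x)|>\|Gx\|+|x^*(x)|-\eps$. Bounding $\max_{\theta}\sup_{y^*}$ below by this one choice of $(\theta,y^*)$ and letting $\eps\to0$ yields $\geq$. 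Thus it remains to prove that $G$ is $r$-generating if and only if $\Phi(x^*)\geq1+r\|x^*\|$ for every $x^*\in X^*$.

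For the direct implication I will use that $\Phi(x^*)\geq1+\|x^*\|_G$ always holds: indeed, for each $\delta>0$ and each $x\in\att(G,\delta)$ one has $\|Gx\|+|x^*(x)|>(1-\delta)+|x^*(x)|$, so taking the supremum over $\att(G,\delta)$ gives $\Phi(x^*)\geq(1-\delta)+\|x^*\|_{G,\delta}$, and letting $\delta\to0$ (recall $\|x^*\|_G=\inf_{\delta}\|x^*\|_{G,\delta}$) gives the claim. If $G$ is $r$-generating, then $\|x^*\|_G\geq r\|x^*\|$ by Proposition~\ref{prop:characterization-rgenerating}$(iv)$, whence $\Phi(x^*)\geq1+r\|x^*\|$.

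The converse is the delicate point, and I expect it to be the main obstacle: $\Phi(x^*)$ can be strictly larger than $1+\|x^*\|_G$, because the supremum defining $\Phi$ may be witnessed by vectors far from every $\att(G,\delta)$, so one cannot simply read off $\|x^*\|_G\geq r\|x^*\|$ from $\Phi(x^*)\geq1+r\|x^*\|$ for the same $x^*$. Instead I argue by contraposition, passing to a rescaled functional to suppress the contribution of those far-away vectors. By Proposition~\ref{prop:characterization-rgenerating}, if $G$ is not $r$-generating then there are $x_0^*\in S_{X^*}$ and $\delta_0>0$ with $\rho:=\|x_0^*\|_{G,\delta_0}<r$. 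For $\beta>0$ I estimate $\Phi(\beta x_0^*)$ by splitting $S_X$: on $\att(G,\delta_0)$ one has $|x_0^*(x)|\leq\rho$ and $\|Gx\|\leq1$, giving a bound $1+\beta\rho$; off $\att(G,\delta_0)$ one has $\|Gx\|\leq1-\delta_0$ and $|x_0^*(x)|\leq1$, giving a bound $1-\delta_0+\beta$. Hence $\Phi(\beta x_0^*)\leq\max\{1+\beta\rho,\,1-\delta_0+\beta\}$, and choosing $\beta$ small enough (any $\beta>0$ if $r=1$, and $0<\beta<\delta_0/(1-r)$ if $r<1$) makes both quantities strictly smaller than $1+r\beta=1+r\|\beta x_0^*\|$. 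This produces a functional violating $\Phi\geq1+r\|\cdot\|$, completing the contrapositive; the scaling $x_0^*\mapsto\beta x_0^*$ is exactly what rebalances the two regimes in the supremum so that the regime seen by $\att(G,\delta_0)$ dominates.
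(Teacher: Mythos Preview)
Your proof is correct and follows essentially the same approach as the paper's. Both arguments first reduce the left-hand side to $\sup_{x\in S_X}(\|Gx\|+|x^*(x)|)$ and handle the forward direction by restricting to $\att(G,\delta)$; for the converse, the paper proves directly that $\|x^*\|_{G,\delta}\geq r$ by applying the hypothesis to the rescaled functional $\tfrac{\delta}{2}x^*$, while you run the contrapositive and rescale by a small $\beta$ --- this is the same scaling trick, and your two-regime split on $\att(G,\delta_0)$ versus its complement makes explicit exactly why that rescaling is the key step.
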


\begin{proof}\parskip=0ex
If $G$ is $r$-generating, fixed $x^*\in X^*$ and $\delta>0$, we can write
\begin{align*}
\max_{\theta\in\T}\sup_{y^*\in \mathcal{A}} \|G^*(y^*)+\theta x^*\|
	 &=\max_{\theta\in\T}\sup_{y^*\in \mathcal{A}} \sup_{x\in B_X} |(G^*y^*)(x)+\theta x^*(x)| \\ &=\sup_{x\in B_X}\sup_{y^*\in \mathcal{A}}(|y^*(Gx)|+|x^*(x)|)
	\\ &=\sup_{x\in B_X}(\|Gx\|+|x^*(x)|) \geq \sup_{x\in\att(G,\delta)} (\|Gx\|+|x^*(x)|) \\ &\geq \sup_{x\in\att(G,\delta)}(1-\delta+|x^*(x)|) \geq 1-\delta+r\|x^*\|
\end{align*}
where the last inequality holds by Proposition~\ref{prop:characterization-rgenerating}. The arbitrariness of $\delta$ gives the desired inequality.

To prove the converse, fixed $x^*\in S_{X^*}$ and $\delta>0$, it suffices to show that $\|x^*\|_{G,\delta}\geq  r$ by Proposition~\ref{prop:characterization-rgenerating}. We use the hypothesis for $\frac{\delta}{2} x^*$ to get that
$$
\max_{\theta\in\T}\sup_{y^*\in \mathcal{A}} \left\|G^*(y^*)+\theta \frac{\delta}{2} x^*\right\|\geq 1+r\frac{\delta}{2}.
$$
So, given $0<\eps<\frac{\delta}{2}$, there are $y^*\in \mathcal{A}$, $\theta\in \T$, and $x\in B_X$ such that
$$
\|Gx\|+\frac{\delta}{2}|x^*(x)|\geq \left|y^*(Gx)+\theta \frac{\delta}{2} x^*(x)\right|>1+r\frac{\delta}{2}-\eps
$$
which implies that $$\frac{\delta}{2}|x^*(x)|>r\frac{\delta}{2}-\eps\quad \text{ and } \quad \|Gx\|>1+(r-1)\frac{\delta}{2}-\eps\geq 1-\delta.$$
The arbitrariness of $\eps$ gives $\|x^*\|_{G,\delta}\geq r$ as desired.
\end{proof}

Of course, one can always use $\mathcal{A}=B_{Y^*}$ in Theorem~\ref{thm:charact-rGenerating-spear-set} if no other interesting choice for $\mathcal{A}$ is available and still one obtains a useful characterization of $r$-generating operators.

In the case of generating operators, we emphasize the following result.

\begin{corollary}\label{corollary:charact-generating-dual-spear}
Let $X$, $Y$ be Banach spaces, let $\mathcal{A}\subset B_{Y^*}$ be one-norming for $Y$, and let $G\in \mathcal{L}(X,Y)$ with $\|G\|=1$. Then, the following are equivalent:
\begin{enumerate}[$(i)$]
\item $G$ is generating.
\item $G^*(B_{Y^*})$ is a spear set of $X^*$.
\item $G^*(\mathcal{A})$ is a spear set of $X^*$.
\item $\displaystyle \max_{\theta\in \T}\sup_{y^*\in B_{Y^*}}\|G^*(y^*) + \theta x^*\|=2$ for every $x^*\in S_{X^*}$.
\end{enumerate}
\end{corollary}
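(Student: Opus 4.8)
The plan is to reduce everything to Theorem~\ref{thm:charact-rGenerating-spear-set} with $r=1$, together with the elementary observation that the relevant supremum can never exceed $1+\|x^*\|$.

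First I would record the trivial upper bound: for any $\mathcal{B}\subseteq B_{Y^*}$ and any $x^*\in X^*$, since $\|G^*(y^*)\|\leq\|y^*\|\leq 1$ we have $\|G^*(y^*)+\theta x^*\|\leq 1+\|x^*\|$, whence $\max_{\theta\in\T}\sup_{y^*\in\mathcal{B}}\|G^*(y^*)+\theta x^*\|\leq 1+\|x^*\|$. Also $G^*(\mathcal{B})\subseteq B_{X^*}$ because $\|G^*\|=\|G\|=1$, so $G^*(\mathcal{B})$ is a legitimate candidate for a spear set of $X^*$. Consequently, ``$G^*(\mathcal{B})$ is a spear set'' is equivalent to the reverse inequality $\max_{\theta\in\T}\sup_{y^*\in\mathcal{B}}\|G^*(y^*)+\theta x^*\|\geq 1+\|x^*\|$ holding for every $x^*\in X^*$.

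Now the equivalences $(i)\Leftrightarrow(ii)\Leftrightarrow(iii)$ come directly from Theorem~\ref{thm:charact-rGenerating-spear-set} applied with $r=1$. Indeed, that theorem says that $G$ is generating precisely when $\max_{\theta\in\T}\sup_{y^*\in\mathcal{B}}\|G^*(y^*)+\theta x^*\|\geq 1+\|x^*\|$ for all $x^*$, for any fixed $\mathcal{B}\subseteq B_{Y^*}$ with $\overline{\aconv}^{w^*}(\mathcal{B})=B_{Y^*}$. Taking $\mathcal{B}=\mathcal{A}$ (the given one-norming set) and combining with the previous paragraph gives $(i)\Leftrightarrow(iii)$; taking $\mathcal{B}=B_{Y^*}$ (which trivially satisfies $\overline{\aconv}^{w^*}(B_{Y^*})=B_{Y^*}$ and is one-norming) gives $(i)\Leftrightarrow(ii)$.

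It remains to fit $(iv)$ into the chain. The implication $(ii)\Rightarrow(iv)$ is immediate: specialize the spear-set identity of $(ii)$ to $x^*\in S_{X^*}$, where $1+\|x^*\|=2$. For $(iv)\Rightarrow(ii)$ I would pass to the reformulation used in the proof of Theorem~\ref{thm:charact-rGenerating-spear-set}: phase-optimizing over $\theta$ and dualizing gives, for every $x^*\in X^*$,
$$
\max_{\theta\in\T}\sup_{y^*\in B_{Y^*}}\|G^*(y^*)+\theta x^*\|=\sup_{x\in B_X}\bigl(\|Gx\|+|x^*(x)|\bigr).
$$
Then the key step is a ``forcing'' argument: given $x^*\neq 0$, put $u^*=x^*/\|x^*\|\in S_{X^*}$; by $(iv)$ there are $x\in B_X$ with $\|Gx\|+|u^*(x)|$ arbitrarily close to $2$, and since each summand is at most $1$ this forces both $\|Gx\|$ and $|u^*(x)|$ arbitrarily close to $1$. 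Scaling back, $|x^*(x)|=\|x^*\|\,|u^*(x)|$ is then close to $\|x^*\|$, so $\|Gx\|+|x^*(x)|$ approaches $1+\|x^*\|$; together with the trivial upper bound this yields the spear-set identity of $(ii)$ (the case $x^*=0$ being just $\sup_{x\in B_X}\|Gx\|=\|G\|=1$). I expect this last step---upgrading the unit-sphere information in $(iv)$ to the full homogeneous spear-set identity---to be the only point requiring care; the reformulation of the supremum makes it transparent, since near-maximizers are forced to nearly maximize both terms simultaneously.
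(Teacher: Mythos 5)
Your proposal is correct, and for the equivalences $(i)\Leftrightarrow(ii)\Leftrightarrow(iii)$ it coincides with the paper's argument (both just apply Theorem~\ref{thm:charact-rGenerating-spear-set} with $r=1$ to $\mathcal{A}$ and to $B_{Y^*}$, using that the supremum never exceeds $1+\|x^*\|$). Where you diverge is in the only genuinely new item, $(iv)\Rightarrow(ii)$. The paper handles this via Remark~\ref{rem:spearset-only-for-SX}, a purely metric statement about an \emph{arbitrary} subset $F\subset B_Z$ of an arbitrary Banach space: the unit-sphere condition $\max_{\theta\in\T}\sup_{z\in F}\|z+\theta z_0\|=2$ for $z_0\in S_Z$ is upgraded to the full spear-set identity by rescaling $z_1/\|z_1\|$ and a triangle-inequality estimate, split into the cases $\|z_1\|\geq 1$ and $\|z_1\|<1$. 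You instead exploit the specific form of the set $G^*(B_{Y^*})$, dualizing to the identity $\max_{\theta\in\T}\sup_{y^*\in B_{Y^*}}\|G^*(y^*)+\theta x^*\|=\sup_{x\in B_X}(\|Gx\|+|x^*(x)|)$ and then running a forcing argument: since both summands are at most $1$, near-maximizers for $u^*=x^*/\|x^*\|$ must nearly maximize each term, and rescaling gives the homogeneous identity. Both arguments are elementary and complete (your swap of suprema and the phase optimization $\max_{\theta}|a+\theta b|=|a|+|b|$ are legitimate, and you correctly dispose of $x^*=0$). The paper's route buys a reusable general fact about spear sets, stated once as a remark and applicable elsewhere; yours buys transparency for this particular corollary, making visible that condition $(iv)$ forces simultaneous near-attainment of $\|Gx\|$ and $|x^*(x)|$, which is in fact the same mechanism underlying the converse direction in the proof of Theorem~\ref{thm:charact-rGenerating-spear-set}.
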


Only item $(iv)$ is new, and follows immediately from the following remark.

\begin{remark}\label{rem:spearset-only-for-SX}
{\slshape Let $Z$ be a Banach space and $F\subset B_Z$. Then, $F$ is a spear set if and only if
$\max\limits_{\theta\in \T}\sup\limits_{z\in F}\|z + \theta z_0\|=2$ for every $z_0\in S_Z$.}\newline \indent
Indeed, to prove the sufficiency, fixed $0\neq z_1\in X$, observe that
$$
\max\limits_{\theta\in \T}\sup\limits_{z\in F}\left\|z+\theta  \frac{z_1}{\|z_1\|}\right\|=2
$$
implies that $\max\limits_{\theta\in \T}\sup\limits_{z\in F}\bigl\|\|z_1\|z+\theta z_1\bigr\|=2\|z_1\|$. So, if $\|z_1\|\geq1$, the triangle inequality allows to write
$$
\max\limits_{\theta\in \T}\sup\limits_{z\in F}\left\|z+\theta  z_1\right\|\geq \max\limits_{\theta\in \T}\sup\limits_{z\in F}\bigl\|\|z_1\|z+\theta  z_1\bigr\|-(\|z_1\|-1)=1+\|z_1\|.
$$
If otherwise $\|z_1\|<1$, just observe that
\[
\pushQED{\qed}
\max\limits_{\theta\in \T}\sup\limits_{z\in F}\left\|z+\theta  z_1\right\|\geq \max\limits_{\theta\in \T}\sup\limits_{z\in F}\left\|z+\theta  \frac{z_1}{\|z_1\|}\right\|-(1-\|z_1\|)=1+\|z_1\|.\qedhere\popQED
\]
\end{remark}

What we have shown is that it suffices to use elements $x^*\in S_{X^*}$ in Theorem~\ref{thm:charact-rGenerating-spear-set} when $r=1$. However, the following example shows that this is not the case for any other value of $0<r<1$.

\begin{example}
{\slshape Let $0<r<1$ be fixed, let $X$ be the real two-dimensional Hilbert space, $\{e_1, e_2\}$ be its orthonormal basis with $\{e_1^*, e_2^*\}$ being the corresponding coordinate functionals. The norm-one operator $G\in \mathcal{L}(X)$ given by
$G=r\Id+(1-r)e_1^*\otimes e_1$
is not $r$-generating but satisfies
$$
\max_{\theta\in\T}\sup_{x^*\in B_{X^*}}\|G^*(x^*)+\theta x^*\|\geq 1+r\|x^*\|$$ for every $x^*\in S_{X^*}$.}\newline
\indent Indeed, it is clear that $\|G\|=1$ and $G^*=r\Id+(1-r)e_1\otimes e_1^*$. So, given $x^*\in S_{X^*}$, we have that
\begin{align*}
\max_{\theta\in\T}\sup_{x^*\in B_{X^*}}\|G^*(x^*)+\theta x^*\|&\geq \|G^*(x^*)+x^*\|=\|(1+r)x^*+(1-r)x^*(e_1)e_1^*\|\\ &=\|2x^*(e_1)e_1^*+(1+r)x^*(e_2)e_2^*\|\geq 1+r.
\end{align*}
Observe that $G$ attains its norm only at $\pm e_1$ so Proposition~\ref{prop:characterization-rGenerating-Xfindim} tells us that $G$ is not $r$-generating (in fact, it is not $s$-generating for any $0<s\leq 1$).\qed
\end{example}

If we are able to guarantee that $G^*(B_{Y^*})$ is a spear set of $X^*$, Corollary~\ref{corollary:charact-generating-dual-spear} shows that $G$ is generating. The most naive way to do so is to require $G^*(B_{Y^*})=B_{X^*}$ but observe that, as $\|G^*\|=1$, this implies that $G^*$ is surjective and $G$ is an isometry.

The other extreme possibility is $G^*(B_{Y^*})=\{\lambda x_0^*\colon \lambda\in \K,\, |\lambda|\leq 1\}$ for some $x_0^*\in S_{X^*}$. This obviously means that $G$ is a rank one operator; in this case, $G^*(B_{Y^*})$ is a spear set of $X^*$ if and only if $x_0^*$ is a spear vector of $X^*$. In this particular case, Corollary~\ref{corollary:charact-generating-dual-spear} reads as follows.

\begin{corollary}\label{cor:charact-generatin-rank1}
Let $X$, $Y$ be Banach spaces, $x_0^*\in S_{X^*}$, and $y_0\in S_Y$. Then, the rank-one operator $G=x_0^* \otimes y_0$ is generating if and only if $x_0^*\in \Spear(X^*)$.
\end{corollary}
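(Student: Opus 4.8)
The plan is to apply Corollary~\ref{corollary:charact-generating-dual-spear} with the explicit description of $G^*(B_{Y^*})$ in this rank-one situation. The operator $G=x_0^*\otimes y_0$ has adjoint $G^*=y_0^*\mapsto y_0^*(y_0)\,x_0^*$, so that $G^*(y^*)=y^*(y_0)\,x_0^*$ for every $y^*\in Y^*$. Since $y_0\in S_Y$, as $y^*$ ranges over $B_{Y^*}$ the scalar $y^*(y_0)$ ranges over all of $\overline{B_{\K}}=\{\lambda\in\K\colon|\lambda|\leq1\}$ (the value $\lambda=1$ is approached by functionals nearly attaining their norm at $y_0$, and by rotating by $\theta\in\T$ one gets the whole disk). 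Hence
$$
G^*(B_{Y^*})=\{\lambda x_0^*\colon \lambda\in\K,\ |\lambda|\leq1\},
$$
which is exactly the set singled out in the discussion preceding the statement.

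The heart of the matter is then to identify when this particular set is a spear set of $X^*$. Taking $\mathcal{A}=B_{Y^*}$ (which is one-norming for $Y$), Corollary~\ref{corollary:charact-generating-dual-spear} tells us that $G$ is generating if and only if $G^*(B_{Y^*})$ is a spear set of $X^*$. So it remains to show that
$$
\{\lambda x_0^*\colon |\lambda|\leq1\}\ \text{is a spear set of}\ X^*\quad\Longleftrightarrow\quad x_0^*\in\Spear(X^*).
$$
For this I would unwind the definition of spear set. For any $z^*\in X^*$,
$$
\max_{\theta\in\T}\sup_{|\lambda|\leq1}\|\lambda x_0^*+\theta z^*\|
=\max_{\theta\in\T}\|x_0^*+\theta z^*\|,
$$
where the equality holds because, for fixed $\theta$, the map $\lambda\mapsto\|\lambda x_0^*+\theta z^*\|$ is convex on the disk $|\lambda|\leq1$ and hence attains its maximum on the boundary $|\lambda|=1$; absorbing the boundary phase of $\lambda$ into $\theta$ (the set $\T$ is closed under such rotations) shows the supremum over $|\lambda|=1$ agrees with the single value at $\lambda=1$ after re-optimising over $\theta$. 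Consequently the spear-set condition for $\{\lambda x_0^*\colon|\lambda|\leq1\}$ reads $\max_{\theta\in\T}\|x_0^*+\theta z^*\|=1+\|z^*\|$ for all $z^*\in X^*$, which is precisely the statement that $x_0^*$ is a spear vector of $X^*$.

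I expect the only genuinely delicate point to be the interchange of the supremum over $\lambda$ with the rest, i.e.\ justifying that the absolutely convex ``fattening'' of $\{x_0^*\}$ to the disk does not enlarge the spear quantity. This is handled by the convexity/boundary-maximum observation above, together with the fact that $\T\cdot\{\lambda x_0^*\colon|\lambda|\leq1\}=\{\lambda x_0^*\colon|\lambda|\leq1\}$, so rotating by $\theta$ and rotating $\lambda$ are interchangeable. Everything else is a direct substitution into the definitions already recorded in the excerpt. As a sanity check on the forward direction one notes that $x_0^*$ is automatically a norm-one element lying in the set, consistent with the requirement $\Spear(X^*)\subset S_{X^*}$.
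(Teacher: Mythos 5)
Your proof is correct and follows essentially the same route as the paper: the paper also obtains this corollary by observing that $G^*(B_{Y^*})=\{\lambda x_0^*\colon|\lambda|\leq1\}$ and that this disk is a spear set of $X^*$ precisely when $x_0^*$ is a spear vector, then invoking Corollary~\ref{corollary:charact-generating-dual-spear}. You merely spell out the (correct) convexity/rotation argument that the paper leaves implicit; note only that the value $y^*(y_0)=1$ is actually attained by Hahn--Banach, not just approached.
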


Observe the similarity with \cite[Corollary~5.9]{KMMP-Spear} which states that $G=x_0^* \otimes y_0$ is spear if and only if $x_0^*$ is a spear functional and $y_0$ is a spear vector. Here the condition is easier to satisfy, of course.

\subsection{Some stability results}
The following result shows that the property of being generating is stable by $c_0$-, $\ell_1$-, and $\ell_\infty$-sums of Banach spaces.

\begin{proposition}\label{prop:stability-sums}
Let $\{X_\lambda\colon \lambda\in\Lambda\}$, $\{Y_\lambda\colon \lambda\in\Lambda\}$ be two families of Banach spaces and let $G_\lambda\in\mathcal{L}(X_\lambda,Y_\lambda)$ be a norm-one operator for every $\lambda\in\Lambda$. Let $E$ be one of the Banach spaces $c_0$, $\ell_\infty$, or $\ell_1$, let $X=\left[\bigoplus_{\lambda\in\Lambda} X_\lambda\right]_E$ and $Y=\left[\bigoplus_{\lambda\in\Lambda} Y_\lambda\right]_E$, and define the operator $G\colon X\longrightarrow Y$ by
$$ G\left[(x_\lambda)_{\lambda\in\Lambda}\right]=(G_\lambda x_\lambda)_{\lambda\in\Lambda} $$
for every $(x_\lambda)_{\lambda\in\Lambda}\in \left[\bigoplus_{\lambda\in\Lambda} X_\lambda\right]_E$. Then, $G$ is generating if and only if $G_\lambda$ is generating for every $\lambda\in\Lambda$.
\end{proposition}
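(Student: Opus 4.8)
The plan is to prove both implications through the dual description of Corollary~\ref{corollary:charact-generating-dual-spear}: a norm-one operator is generating exactly when the image of the dual ball under its adjoint is a spear set of the dual of the domain. Thus I would reduce the statement to the equivalence ``$G^*(B_{Y^*})$ is a spear set of $X^*$ if and only if $G_\lambda^*(B_{Y_\lambda^*})$ is a spear set of $X_\lambda^*$ for every $\lambda$''. For each $\mu\in\Lambda$ let $\pi_\mu\colon X\to X_\mu$ and $\rho_\mu\colon Y\to Y_\mu$ be the coordinate projections and $\sigma_\mu\colon X_\mu\to X$ the (isometric) single-coordinate embedding; the one identity to record at the outset is $\rho_\mu G=G_\mu\pi_\mu$, whose adjoint gives the compatibility relation $G^*\rho_\mu^*=\pi_\mu^* G_\mu^*$, with $\pi_\mu^*\colon X_\mu^*\to X^*$ the isometric single-coordinate embedding of the duals. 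I would also record how the ambient duals look: for $E=\ell_1$ one has $X^*=[\bigoplus_\lambda X_\lambda^*]_{\ell_\infty}$ and $Y^*=[\bigoplus_\lambda Y_\lambda^*]_{\ell_\infty}$, while for $E=c_0$ one has $X^*=[\bigoplus_\lambda X_\lambda^*]_{\ell_1}$, and for $E=\ell_\infty$ the $\ell_1$-sum $[\bigoplus_\lambda X_\lambda^*]_{\ell_1}$ sits isometrically inside $X^*$ as a norming subspace. By Remark~\ref{rem:spearset-only-for-SX} it suffices to test every spear-set assertion on unit vectors.

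For the implication that each $G_\lambda$ being generating forces $G$ to be generating I would switch to the primal criterion of Corollary~\ref{corollary:characterization-generating}(iv) and show $\overline{\conv}(\att(G,\delta))=B_X$ for every $\delta>0$. The key elementary fact is that $\sigma_\lambda$ maps $\att(G_\lambda,\delta)$ into $\att(G,\delta)$, since a vector supported on a single coordinate $\lambda$ satisfies $\|Gx\|=\|G_\lambda x_\lambda\|$. For $E=\ell_1$ this already closes the case: any $x=(x_\lambda)\in B_X$ is the subconvex combination $\sum_\lambda\|x_\lambda\|\,\sigma_\lambda(x_\lambda/\|x_\lambda\|)$ of single-coordinate vectors, each factor lying in $\overline{\conv}(\att(G_\lambda,\delta))$ because $G_\lambda$ is generating. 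For $E=c_0$ and $E=\ell_\infty$ I would instead use the spike-and-average device from Example~\ref{examples:preliminaryones}(5): given $x\in B_X$ (reduced to finite support when $E=c_0$) and a coordinate $m$ on which $x$ vanishes, pick $w_m\in\att(G_m,\delta)$ and write $x=\tfrac12[(x+\sigma_m w_m)+(x-\sigma_m w_m)]$; both summands stay in $S_X$ because the sup-type norm is unchanged, and both lie in $\att(G,\delta)$ because coordinate $m$ already forces near-attainment. This yields $x\in\overline{\conv}(\att(G,\delta))$ in all three cases.

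The converse is where the sum-type becomes decisive. Assuming $G$ generating, fix $\mu$ and a unit vector $\zeta_\mu\in S_{X_\mu^*}$; feed $\zeta=\pi_\mu^*\zeta_\mu\in S_{X^*}$ into the spear-set property of $G^*(B_{Y^*})$ and expand $\|G^*\eta+\theta\zeta\|_{X^*}$ using the compatibility relation. This splits the ambient norm into a coordinate-$\mu$ term, namely $\|G_\mu^*\eta_\mu+\theta\zeta_\mu\|$, and off-coordinate terms that only involve $\|G_\lambda^*\eta_\lambda\|\leq 1$ for $\lambda\neq\mu$. When $E=\ell_1$ the norm of $X^*$ is the supremum over coordinates, so every off-$\mu$ term is bounded by $1$ and cannot reach the target value $1+\|\zeta_\mu\|>1$; the supremum is therefore forced onto coordinate $\mu$, giving $\max_{\theta\in\T}\sup_{\eta_\mu\in B_{Y_\mu^*}}\|G_\mu^*\eta_\mu+\theta\zeta_\mu\|=1+\|\zeta_\mu\|$, which is exactly the spear-set property of $G_\mu^*(B_{Y_\mu^*})$.

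I expect the main obstacle to be this same converse for $E=c_0$ and $E=\ell_\infty$. Now the norm of $X^*$ is of $\ell_1$-type, so the off-$\mu$ coordinates are summed rather than maximised, and the target value $1+\|\zeta_\mu\|$ could a priori be approached by spreading the dual witness $\eta$ over the coordinates $\lambda\neq\mu$ --- borrowing near-attainment from the remaining summands --- instead of by coordinate $\mu$ itself. The load-bearing step, and the one I would scrutinise hardest, is to rule out this cross-summand compensation: to show that against the single-coordinate perturbation $\theta\zeta_\mu$ the spear equality must still be witnessed within coordinate $\mu$ alone, so that localising the witness to the $\mu$-th summand does not lose the value $1+\|\zeta_\mu\|$. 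This is precisely where the $\ell_1$ argument fails to transfer verbatim, in the passage from a maximum to a sum of the off-coordinate contributions; for the infinite $\ell_\infty$-sum there is the extra wrinkle that $X^*$ strictly contains $[\bigoplus_\lambda X_\lambda^*]_{\ell_1}$, which I would handle by running the localisation against that norming subspace through Theorem~\ref{thm:charact-rGenerating-spear-set} rather than against all of $B_{Y^*}$.
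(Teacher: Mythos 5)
Your forward direction is essentially sound but needs one repair. For $E=\ell_1$ your subconvex-combination argument is the paper's own proof; for $E=c_0$ or $\ell_\infty$, however, the spike-and-average device requires a coordinate on which $x$ vanishes, and for an $\ell_\infty$-sum (or a $c_0$-sum over a finite index set) a generic $x\in B_X$ has no such coordinate and finitely supported vectors are not dense --- note that $x\pm\sigma_m w_m$ leaves $B_X$ when $x_m\neq 0$. The fix is easy and stays within your primal strategy: fix any $m$, use $B_{X_m}=\overline{\conv}(\att(G_m,\delta))$ to approximate $x_m$ by a convex combination $\sum_i t_i v_i$ with $v_i\in\att(G_m,\delta)$, and replace the $m$-th coordinate of $x$ by $v_i$; each modified vector lies in $S_X\cap\att(G,\delta)$ (the sup-type norm is $1$ and $\|G(\cdot)\|\geq\|G_m v_i\|>1-\delta$), and the combination approximates $x$. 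This differs from the paper, which works at the level of operator norms: it picks $\kappa$ with $\|P_\kappa T\|>\|T\|-\eps$ and transfers the problem to $G_\kappa$ through the section $x\mapsto(x,x_0^*(x)w_0)$, obtaining $\|T\|_G\geq\|S\|_{G_\kappa}$. Your converse for $E=\ell_1$ via the single-coordinate test functional $\zeta=\pi_\mu^*\zeta_\mu$ is correct and is a genuinely different (and cleaner) route than the paper's, which extends $T_\kappa$ by zero and computes $\|T\|_G=\|T_\kappa\|_{G_\kappa}$.

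The obstacle you flag for the converse when $E\in\{c_0,\ell_\infty\}$ is not a technical step awaiting a trick: the cross-summand compensation you fear actually occurs, and the ``only if'' part of the statement is \emph{false} for these sums. In the $\ell_1$-type dual norm, given $\zeta=\pi_\mu^*\zeta_\mu$ and any $\lambda_0\neq\mu$, choosing $\eta\in B_{Y^*}$ concentrated at $\lambda_0$ with $\|G_{\lambda_0}^*\eta_{\lambda_0}\|>1-\eps$ gives $\|G^*\eta+\theta\zeta\|\geq(1-\eps)+\|\zeta_\mu\|$, so the spear equality at every single-coordinate functional holds no matter what $G_\mu$ is: such tests carry no information. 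At the level of the proposition itself, take $E=\ell_\infty$, $\Lambda=\{1,2\}$, $X_1=Y_1=\R$, $G_1=\Id$, $X_2=Y_2=\ell_2^2$, $G_2=\mathrm{diag}\bigl(1,\tfrac12\bigr)$. Every extreme point $(t,v)$ of $B_{\R\oplus_\infty\ell_2^2}$ has $|t|=1$, hence $\|G(t,v)\|=1$, so $G$ is generating by Corollary~\ref{cor:X-finite-dimension}; yet $G_2$ is not generating, since $\|G_2e_2\|=\tfrac12$ at the extreme point $e_2$. A spike at one distinguished coordinate gives the same phenomenon for infinite $c_0$-sums (every $x\in B_X$ is a convex combination of elements of $\att(G,\delta)$ obtained by replacing $x_1$ with unimodular scalars). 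Correspondingly, the paper's own proof of this direction fails at exactly the point you were scrutinising: its claimed identity $\sup\{\|T(x_\kappa,w)\|\colon(x_\kappa,w)\in\att(G,\delta)\}=\sup\{\|T_\kappa x_\kappa\|\colon x_\kappa\in\att(G_\kappa,\delta)\}$ is wrong for sup-type sums, because $\att(G,\delta)$ contains pairs with $x_\kappa\in B_{X_\kappa}$ arbitrary and the near-attainment carried entirely by $w$; it survives only for $E=\ell_1$, where $\|x_\kappa\|+\|w\|=1$ forces $\|G_\kappa x_\kappa\|>\|x_\kappa\|-\delta$. So your proposal, once the $\ell_\infty$ patch above is made, proves everything that is actually true here --- the full equivalence for $E=\ell_1$ and the ``if'' part for $E\in\{c_0,\ell_\infty\}$ --- and no argument can close the remaining gap.
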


\begin{proof}\parskip=0ex
	Suppose first that $G$ is generating and, fixed $\kappa\in\Lambda$, let us show that $G_\kappa$ is generating. Observe that calling $W=\left[\bigoplus_{\lambda\neq\kappa}
	X_\lambda\right]_E$ and $Z=\left[\bigoplus_{\lambda\neq\kappa}
	Y_\lambda\right]_E$, we can write $X=X_\kappa\oplus_\infty W$ and $Y=Y_\kappa\oplus_\infty Z$ when $E$ is $\ell_\infty$ or $c_0$ and $X=X_\kappa\oplus_1 W$ and $Y=Y_\kappa\oplus_1 Z$ when $E$ is $\ell_1$. Given $T_\kappa\in \mathcal{L}(X_\kappa,Y_\kappa)$, define $T\in \mathcal{L}(X,Y)$ by
	$$T(x_\kappa,w)=(T_\kappa x_\kappa,0) \qquad (x_\kappa\in X_\kappa,\ w\in W)$$
	which satisfies $\|T\|=\|T_\kappa\|$ and $\|T\|_G=\|T\|$ as $G$ is generating. Moreover,
	$$ \|T\|_G=\inf_{\delta>0}\sup\{\|T(x_\kappa,w)\|\colon (x_\kappa,w)\in\att(G,\delta)\}=\inf_{\delta>0}\sup\{\|T_\kappa x_\kappa\|\colon x_\kappa\in\att(G_\kappa,\delta)\}=\|T_\kappa\|_{G_\kappa}, $$
	thus $\|T_\kappa\|=\|T_\kappa\|_{G_\kappa}$. The arbitrariness of $T_\kappa$ gives that $G_\kappa$ is generating.
	
	To prove the sufficiency when $E$ is $c_0$ or $\ell_{\infty}$, given $T\in\mathcal{L}(X,Y)$, it is enough to show that $\|T\|_G\geq \|T\|$. Fixed $\eps>0$, we may find $\kappa\in\Lambda$ such that $\|P_\kappa T\|>\|T\|-\eps$, where $P_\kappa$ denotes the projection from $Y$ onto $Y_\kappa$. Now, writing $X= X_\kappa\oplus_\infty W$ where $W=\left[\bigoplus_{\lambda\neq\kappa}
	X_\lambda\right]_E$, we have that $B_X=\conv\left(S_{X_\kappa}\times S_W\right)$ and so we may find $x_0\in S_{X_\kappa}$ and $w_0\in S_W$ such that
	$$
	\|P_\kappa T(x_0,w_0)\|>\|T\|-\eps.
	$$
	Take $x_0^*\in S_{{X_\kappa}^*}$ with $x_0^*(x_0)=1$ and define the operator $S\in\mathcal{L}(X_\kappa,Y_\kappa)$ by
	$$
	S(x)=P_\kappa T(x,x_0^*(x)w_0) \qquad (x\in X_\kappa)
	$$
	which satisfies $\|S\|\geq \|Sx_0\|=\|P_\kappa T(x_0,w_0)>\|T\|-\eps$ and $\|S\|_{G_\kappa}=\|S\|$ since $G_\kappa$ is generating. Moreover, fixed $\delta>0$,
	\begin{align*}
		\|T\|_{G,\delta} &=\sup\{\|Tx\|\colon x\in S_X, \, \|Gx\|>1-\delta\} \\
		&\geq \sup\{\|T(x,x_0^*(x)w_0)\| \colon x\in X_\kappa, \, (x,x_0^*(x)w_0)\in S_X, \, \|G(x,x_0^*(x)w_0)\|>1-\delta \} \\
		&\geq \sup\{\|P_\kappa T(x,x_0^*(x)w_0)\| \colon x\in S_{X_\kappa}, \, \|G_\kappa x\|>1-\delta \} =\|S\|_{G_\kappa,\delta}.
	\end{align*}
	Therefore, $\|T\|_G\geq\|S\|_{G_\kappa}=\|S\|>\|T\|-\eps$ and the arbitrariness of $\eps$ gives that $\|T\|_G\geq \|T\|$ as desired.
	
	In the case when $E=\ell_1$, fixed $\delta>0$, consider the set
	$$A_\delta:=\bigcup_{\lambda\in\Lambda} \left\{x\in X\colon x_\lambda\in\att(G_\lambda,\delta), \, x_\kappa=0 \textnormal{ if } \kappa\neq\lambda\right\},$$
	which satisfies that $A_\delta\subseteq \att(G,\delta)$ and
	\begin{align*}
		\overline{\conv}(A_\delta)&\supseteq\bigcup_{\lambda\in\Lambda} \overline{\conv}\left(\left\{x\in X\colon x_\lambda\in\att(G_\lambda,\delta), \, x_\kappa=0 \textnormal{ if } \kappa\neq\lambda\right\}\right) \\
		&=\bigcup_{\lambda\in\Lambda} \left\{x\in X\colon x_\lambda\in B_{X_\lambda}, \, x_\kappa=0 \textnormal{ if } \kappa\neq\lambda\right\},
	\end{align*}
	where in the last equality we have used Corollary~\ref{corollary:characterization-generating}.iv as $G_\lambda$ is generating for every $\lambda\in\Lambda$. Therefore, $B_X=\overline{\conv}\left(\bigcup_{\lambda\in\Lambda} \left\{x\in X\colon x_\lambda\in B_{X_\lambda}, \, x_\kappa=0 \textnormal{ if } \kappa\neq\lambda\right\}\right)\subseteq\overline{\conv}(A_\delta)\subseteq\overline{\conv}(\att(G,\delta))$ and the arbitrariness of $\delta$ gives that $G$ is generating by Corollary~\ref{corollary:characterization-generating}.iv.
\end{proof}

We next discuss the relationship of being generating with the operation of taking the adjoint.
We show next that if the second adjoint is $r$-generating then the operator itself is $r$-generating.

\begin{proposition}\label{prop:G**-r-generating}
Let $X$, $Y$ be Banach spaces, let $G\in\mathcal{L}(X,Y)$ be a norm-one operator, and let $r\in(0,1]$. If $G^{**}$ is $r$-generating, then $G$ is also $r$-generating.
\end{proposition}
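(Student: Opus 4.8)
The plan is to reduce everything to the dual characterization of $r$-generating operators provided by Theorem~\ref{thm:charact-rGenerating-spear-set}. Recall that, taking $\mathcal{A}=B_{Y^*}$ there, $G$ is $r$-generating if and only if
$$
\max_{\theta\in\T}\sup_{y^*\in B_{Y^*}}\|G^*(y^*)+\theta x^*\|\geq 1+r\|x^*\| \qquad (x^*\in X^*),
$$
so it suffices to establish this inequality starting from the corresponding statement for $G^{**}$.

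First I would apply Theorem~\ref{thm:charact-rGenerating-spear-set} to the operator $G^{**}\in\mathcal{L}(X^{**},Y^{**})$, but with the specific choice $\mathcal{A}=J_{Y^*}(B_{Y^*})\subset B_{(Y^{**})^*}$. The only routine verification needed here is that this $\mathcal{A}$ is $w^*$-norming for $Y^{**}$, that is, $\overline{\aconv}^{w^*}(\mathcal{A})=B_{(Y^{**})^*}$; this is immediate from the fact that $\|y^{**}\|=\sup_{y^*\in B_{Y^*}}|(J_{Y^*}y^*)(y^{**})|$ for every $y^{**}\in Y^{**}$. Since $G^{**}$ is assumed to be $r$-generating, the theorem then yields
$$
\max_{\theta\in\T}\sup_{y^*\in B_{Y^*}}\|(G^{**})^*(J_{Y^*}y^*)+\theta\psi\|\geq 1+r\|\psi\| \qquad (\psi\in X^{***}).
$$

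The key point is the standard adjoint identity $(G^{**})^*\circ J_{Y^*}=J_{X^*}\circ G^*$, which I would check by evaluating both sides at an arbitrary $x^{**}\in X^{**}$ and unwinding the definitions of the successive adjoints. Substituting this identity into the previous inequality, then restricting the free variable $\psi$ to the canonical copy $\psi=J_{X^*}(x^*)$ for $x^*\in X^*$, and finally using that $J_{X^*}$ is an isometry (so that $\|J_{X^*}(G^*y^*+\theta x^*)\|=\|G^*y^*+\theta x^*\|$ and $\|\psi\|=\|x^*\|$), the inequality collapses to exactly the condition for $G$ displayed in the first paragraph. By Theorem~\ref{thm:charact-rGenerating-spear-set} again, this means $G$ is $r$-generating.

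The main obstacle is conceptual rather than computational: one must make the correct choice of norming set $\mathcal{A}$, namely the canonical image $J_{Y^*}(B_{Y^*})$ rather than all of $B_{(Y^{**})^*}$, so that the adjoint $(G^{**})^*$ interacts cleanly with $G^*$ via the identity $(G^{**})^*\circ J_{Y^*}=J_{X^*}\circ G^*$; with the wrong choice of $\mathcal{A}$ this link between the two adjoints is unavailable and the reduction breaks. Once $\mathcal{A}=J_{Y^*}(B_{Y^*})$ is fixed and the adjoint identity is in place, everything else is bookkeeping with the canonical embeddings, and it is worth noting that passing to the subset $J_{X^*}(X^*)\subset X^{***}$ only weakens the hypothesis, which is all we need.
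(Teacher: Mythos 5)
Your argument is correct, and it takes a genuinely different route from the paper's. The paper works directly with the attainment sets: it fixes $x_0^*\in S_{X^*}\subset X^{***}$, uses Proposition~\ref{prop:characterization-rgenerating} to produce points $x^{**}\in\att(G^{**},\delta)$ at which $|x^{**}(x_0^*)|$ is large, and then pushes these down to $B_X$ via Goldstine's theorem, approximating simultaneously the two functionals $x_0^*$ and $G^*y^*$. You instead stay entirely at the level of adjoints: you apply Theorem~\ref{thm:charact-rGenerating-spear-set} to $G^{**}$ with the one-norming set $\mathcal{A}=J_{Y^*}(B_{Y^*})\subset B_{(Y^{**})^*}$ (which is indeed one-norming, by the very definition of the bidual norm, hence satisfies $\overline{\aconv}^{w^*}(\mathcal{A})=B_{(Y^{**})^*}$), invoke the canonical identity $(G^{**})^*\circ J_{Y^*}=J_{X^*}\circ G^*$, and restrict to $\psi=J_{X^*}(x^*)$; since $J_{X^*}$ is an isometry the inequality collapses to the one characterizing $r$-generation of $G$. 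All the steps check out: both directions of Theorem~\ref{thm:charact-rGenerating-spear-set} that you use are available, and restricting $\psi$ to the canonical copy of $X^*$ only weakens the statement, as you note. What your approach buys is a clean, computation-free reduction in which the weak-star density argument is absorbed into the already-proved characterization theorem; what the paper's approach buys is self-containedness at the level of Proposition~\ref{prop:characterization-rgenerating} and an explicit picture of how norming points of $B_{X^{**}}$ descend to $B_X$.
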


\begin{proof} Fixed $x_0^*\in S_{X^*}\subset X^{***}$, we have that $\|x_0^*\|_{G^{**},\delta}\geq r \|x^*\|$ for every $\delta>0$ by Proposition~\ref{prop:characterization-rgenerating}. So, fixed $\delta>0$ and $\eps>0$, there exists $x^{**}\in \att(G^{**},\delta)$ with $|x^{**}(x_0^*)|>(1-\eps)r$. Now, as $\|G^{**}x^{**}\|>1-\delta$, there is $y^*\in S_{X^*}$ satisfying $|x^{**}(G^*y^*)|>1-\delta$. By Goldstine's theorem there is $x\in B_X$ such that
$$
|x_0^*(x)|=|J_X(x)(x_0^*)|>(1-\eps)r \qquad \textnormal{and} \qquad \|Gx\|\geq |y^*(Gx)|=|J_X(x)(G^*y^*)|>1-\delta
$$
which gives $\|x_0^*\|_{G,\delta}\geq r$ since $\eps>0$ was arbitrary. So $G$ is $r$-generating by Proposition~\ref{prop:characterization-rgenerating}.
\end{proof}

We do not know if the converse of the above result holds in general or even for $r=1$. On the other hand, the following example shows that there is no good behaviour of the property of being generating with respect to taking one adjoint, as the property does not pass from an operator to its adjoint, nor the other way around.

\begin{example}
{\slshape 	Consider the norm-one operator $G\colon c_0\longrightarrow c_0$ defined by
	$$
	Gx=\sum_{n=1}^{\infty} \dfrac{1}{n} x(n) e_n \qquad (x\in c_0).
	$$
	For any $x\in S_{c_0}$ with $x(1)\in \T$ we have that $\|G(x)\|=1$ and, consequently, $x\in\att(G,\delta)$ for every $\delta>0$. Since such elements are enough to recover the whole unit ball of $c_0$ by taking closed convex hull, $G$ is generating by Corollary~\ref{corollary:characterization-generating}.iv.}
\begin{itemize}
  \item \slshape The adjoint operator $G^*\colon\ell_1\longrightarrow\ell_1$
	$$
	G^*(x^*)=\sum_{n=1}^{\infty} \dfrac{1}{n} x^*(n) e_n^* \qquad (x^*\in\ell_1)
	$$
	is not generating by Example~\ref{exa:ell_1} since $\|G^*(e_n^*)\|=\frac{1}{n}<1$ for $n>1$.
  \item \slshape The second adjoint $G^{**}\colon\ell_\infty\longrightarrow\ell_\infty$
	$$
	G^{**}(x^{**})=\sum_{n=1}^{\infty} \dfrac{1}{n} x^{**}(n) e_n^{**} \qquad (x^{**}\in \ell_\infty)
	$$
	is again generating following an analogous argument to the one used for $G$, using this time elements $x\in S_{\ell_\infty}$ with $x(1)\in \T$.
\end{itemize}	
\end{example}

\subsection{Some examples in classical Banach spaces}\label{subsect:classical}
Our aim here is to provide some characterizations of generating operators when the domain space is $L_1(\mu)$ or the range space is $C_0(L)$ by making use of Corollary~\ref{corollary:charact-generating-dual-spear}.

\subsubsection{Operators acting from $L_1(\mu)$}
Let $Y$ be a Banach space and let $(\Omega,\Sigma,\mu)$ be a finite measure space. Recall that an operator $T\in\mathcal{L}(L_1(\mu),Y)$ is \emph{representable} if there exists $g\in L_\infty(\mu,Y)$ such that
$$
T(f)=\int_{\Omega} f(t)g(t)\,d\mu(t) \qquad (f\in L_1(\mu)).
$$
In such case, $\|T\|=\|g\|_\infty$. Moreover, its adjoint $T^*\colon Y^*\to L_\infty(\mu)$ is given by
$$
[T^*(y^*)](f)=y^*(T(f))=\int_{\Omega} f(t)y^*(g(t))\,d\mu(t) \qquad (f\in L_1(\mu), \ y^*\in Y^*),
$$
then $T^*(y)=y^*\circ g \in L_\infty(\mu)$ for $y^*\in Y^*$.

Weakly compact operators are representable (see \cite[p.~65, Theorem~12]{DiestelUhl}, for instance). If $Y$ has the RNP, then every operator in $\mathcal{L}(L_1(\mu),Y)$ is representable (see \cite[p.~63, Theorem~5]{DiestelUhl}, for instance) and so $\mathcal{L}(L_1(\mu),Y)$ identifies with $L_\infty(\mu, Y)$ in this case.

The question of which operators acting from $L_1(\mu)$ are generating leads to study the spear sets in $L_\infty(\mu)$. We do so in the next result which is valid for arbitrary measures.

\begin{proposition}[Spear sets in $B_{L_\infty(\mu)}$]
Let $(\Omega,\Sigma,\mu)$ be a positive measure space and let $F\subset B_{L_\infty(\mu)}$. Then, the following are equivalent:
\begin{enumerate}[$(i)$]
	\item  $F$ is a spear set.
	\item  For every measurable set $A\in\Sigma$ with $\mu(A)\neq0$ and every $\eps>0$ there exists $B\in\Sigma$, $B\subset A$ with $\mu(B)\neq0$ and $f\in F$ such that $|f(t)|>1-\eps$ for every $t\in B$.
\end{enumerate}
\end{proposition}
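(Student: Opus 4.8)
The plan is to verify the spear-set equality only on the unit sphere of $L_\infty(\mu)$, which is legitimate by Remark~\ref{rem:spearset-only-for-SX}: the set $F$ is a spear set if and only if $\max_{\theta\in\T}\sup_{f\in F}\|f+\theta g\|_\infty=2$ for every $g\in S_{L_\infty(\mu)}$. Since $\|f+\theta g\|_\infty\leq\|f\|_\infty+\|g\|_\infty\leq2$ always holds, the real content is to produce, for each such $g$ and each $\eps>0$, a scalar $\theta\in\T$ and a function $f\in F$ with $\|f+\theta g\|_\infty>2-\eps$.

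For $(i)\Rightarrow(ii)$ I would argue directly. Given $A\in\Sigma$ with $\mu(A)\neq0$ and (without loss of generality) $0<\eps<1$, apply the spear condition to the unit vector $g=\chi_A\in S_{L_\infty(\mu)}$ to obtain $\theta\in\T$ and $f\in F$ with $\|f+\theta\chi_A\|_\infty>2-\eps$. Off $A$ one has $|f+\theta\chi_A|=|f|\leq1<2-\eps$, so the set $B:=\{t\in A\colon |f(t)+\theta|>2-\eps\}$ has positive measure, and on $B$ the triangle inequality gives $|f(t)|\geq|f(t)+\theta|-1>1-\eps$ a.e.; after discarding a null set and choosing a representative this delivers exactly the $B\subset A$ and $f\in F$ required in $(ii)$.

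For the converse $(ii)\Rightarrow(i)$, fix $g\in S_{L_\infty(\mu)}$ and $\eps>0$; the aim is to align the phases of $f$ and $g$ on a common set of positive measure. I would introduce auxiliary parameters $\eta,\gamma>0$ (fixed at the end) and pass to $A_0:=\{t\colon |g(t)|>1-\eta\}$, which has positive measure because $\|g\|_\infty=1$. In the complex case, since $\arg g$ is measurable and $g\neq0$ on $A_0$, partitioning $\T$ into finitely many arcs of diameter less than $\gamma$ yields a subset $A_1\subseteq A_0$ of positive measure on which $g(t)/|g(t)|$ stays within $\gamma$ of some fixed $e^{i\psi}$. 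Applying hypothesis $(ii)$ to $A_1$ with parameter $\eta$ produces $B\subseteq A_1$ of positive measure and $f\in F$ with $|f(t)|>1-\eta$ on $B$; repeating the phase partition for the nonvanishing measurable function $f$ on $B$ gives $B_1\subseteq B$ of positive measure on which $f(t)/|f(t)|$ stays within $\gamma$ of some $e^{i\chi}$. Taking $\theta=e^{i(\chi-\psi)}$ makes the arguments of $f(t)$ and $\theta g(t)$ differ by at most $2\gamma$ throughout $B_1$, so the elementary identity $|f(t)+\theta g(t)|^2=|f(t)|^2+|g(t)|^2+2|f(t)||g(t)|\cos\alpha$ with $|\alpha|\leq2\gamma$ forces $|f(t)+\theta g(t)|>2-\eps$ on $B_1$ once $\eta$ and $\gamma$ are small enough; as $\mu(B_1)>0$ this gives $\|f+\theta g\|_\infty>2-\eps$. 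In the real case the scheme degenerates to sorting by the signs of $g$ and of $f$ and choosing $\theta=\pm1$ to match them, so no genuine phase partition is required.

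I expect the only real obstacle to lie in the bookkeeping of the complex case: carrying out the two successive phase partitions measurably and calibrating $\eta$ and $\gamma$ so that the quadratic estimate above actually reaches $2-\eps$. Everything else is routine: the reduction to $S_{L_\infty(\mu)}$ via Remark~\ref{rem:spearset-only-for-SX}, the indicator-function trick for $(i)\Rightarrow(ii)$, the measurability of $\arg$, and the extraction of a positive-measure piece from a finite cover of a positive-measure set.
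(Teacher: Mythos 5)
Your argument is correct and takes essentially the same approach as the paper: the indicator-function test $g=\mathbbm{1}_A$ for $(i)\Rightarrow(ii)$, and for $(ii)\Rightarrow(i)$ a finite phase-partition producing a positive-measure set on which $f$ and $\theta g$ are nearly aligned, followed by the quadratic (triangle-inequality) estimate. The paper merely performs the alignment in a single step --- after obtaining $B$ and $f_0$ from the hypothesis it uses an $\eps$-net of $\T$ to find one $\theta_0$ and a positive-measure $C\subset B$ with $|f_0(t)+\theta_0 x(t)|\geq |f_0(t)|+|x(t)|(1-\eps)$ --- and works with arbitrary $x\in L_\infty(\mu)$ instead of first reducing to the unit sphere; these are cosmetic differences.
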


\begin{proof}
Suppose first that $F$ is a spear set. Given $A\in\Sigma$ with $\mu(A)\neq0$ and $\eps>0$, since
$$
\max_{\theta \in \T}\sup_{f\in F}\left\|f+\theta\mathbbm{1}_{A}\right\|_\infty=2,
$$
there exists $f_0\in F$ and $\theta_0\in\T$ such that $\|f_0+\theta_0\mathbbm{1}_A\|_\infty>2-\eps$ and thus, there exists $B\subset A$ with $\mu(B)\neq0$ such that $|f(t)|>1-\eps$ for every $t\in B$. To prove the converse implication, given $x\in L_\infty(\mu)$ and $\eps>0$, there is $A\in\Sigma$ with $\mu(A)\neq0$ such that $|x(t)|\geq\|x\|_\infty-\eps$ for every $t\in A$. By the hypothesis, there is a subset $B$ of $A$ with $\mu(B)\neq0$ and $f\in F$ such that $|f_0(t)|>1-\eps$ for every $t\in B$. Now, thanks to the compactness of $\T$ we can fix an $\eps$-net $\T_\eps$ of $\T$, then we may find $\theta_0 \in \T_\eps$ and $C\subset B$ with $\mu(C)\neq0$ such that $|f_0(t)+\theta_0 x(t)|\geq |f_0(t)|+|x(t)|(1-\eps)$ for every $t\in C$. Therefore,
\begin{align*}
	\max_{\theta \in \T}\sup_{f\in F}\|f+\theta x\|_\infty &\geq \max_{\theta \in \T}\|f_0+\theta x\|_\infty \geq \inf_{t\in C} |f_0(t)+\theta x(t)| \geq \inf_{t\in C} |f_0(t)|+|x(t)|(1-\eps)\\
	&\geq 1-\eps+(\|x\|_\infty-\eps)(1-\eps),
\end{align*}
and the arbitrariness of $\eps$ gives $ \max_{\theta \in \T}\sup_{f\in F}\|f+\theta x\|_\infty \geq 1+\|x\|_\infty$.
\end{proof}

As an immediate consequence we get the following characterization of generating representable operators acting on $L_1(\mu)$.

\begin{corollary}\label{cor:char-generating-from-L1}
Let $Y$ be a Banach space, let $(\Omega,\Sigma,\mu)$ be a finite measure space, and let $G\in\mathcal{L}(L_1(\mu),Y)$ be a norm-one operator which is representable by $g\in L_\infty(\mu, Y)$. Then, the following are equivalent:
\begin{enumerate}[$(i)$]
\item $G$ is generating.
\item $\{y^*\circ g\colon y^*\in B_{Y^*} \}$ is a spear set of $B_{L_\infty(\mu)}$.
\item For every measurable set $A\subset \Omega$ with $\mu(A)>0$ and every $\eps>0$ there exists $B\subset A$ with $\mu(B)>0$ such that $\|g(t)\|>1-\eps$ for all $t\in B$.
\item $\|g(t)\|=1$ $\mu$-almost everywhere.
\end{enumerate}
\end{corollary}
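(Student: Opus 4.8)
The plan is to derive all four equivalences from the two results immediately preceding, organizing them into the cycle $(i)\Leftrightarrow(ii)\Rightarrow(iii)\Rightarrow(iv)\Rightarrow(ii)$, with a genuine measure-theoretic argument needed only for the last implication. For $(i)\Leftrightarrow(ii)$ I would simply apply Corollary~\ref{corollary:charact-generating-dual-spear} with the one-norming set $\mathcal{A}=B_{Y^*}$. Since $X=L_1(\mu)$ has $X^*=L_\infty(\mu)$ and the adjoint of a representable operator acts by $G^*(y^*)=y^*\circ g$, the set $G^*(B_{Y^*})$ is exactly $\{y^*\circ g\colon y^*\in B_{Y^*}\}$, so ``$G$ generating'' translates verbatim into ``$\{y^*\circ g\}$ is a spear set of $L_\infty(\mu)$'', which is $(ii)$.

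Next, $(ii)\Rightarrow(iii)$ is immediate from the forward direction of the preceding Proposition on spear sets in $B_{L_\infty(\mu)}$, applied to $F=\{y^*\circ g\colon y^*\in B_{Y^*}\}$: for each $A$ with $\mu(A)\neq 0$ and each $\eps>0$ it produces $B\subset A$ with $\mu(B)\neq 0$ and some $y^*\in B_{Y^*}$ with $|y^*(g(t))|>1-\eps$ on $B$, whence $\|g(t)\|\geq|y^*(g(t))|>1-\eps$. For $(iii)\Rightarrow(iv)$ I would argue by contraposition: as $\|g\|_\infty=1$ forces $\|g(t)\|\leq 1$ a.e., failure of $(iv)$ would mean that $A_\eta:=\{t\colon\|g(t)\|\leq 1-\eta\}$ has positive measure for some $\eta>0$, and then testing $(iii)$ on $A=A_\eta$ with any $\eps<\eta$ yields the desired contradiction.

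The hard part will be $(iv)\Rightarrow(ii)$, the only step where a single functional must be selected uniformly on a set of positive measure; here I would use the converse direction of the Proposition on spear sets. Since $g\in L_\infty(\mu,Y)$ is strongly measurable, it is essentially separably valued, so its essential range can be covered by countably many balls $B(z_k,\eps/2)$; given $A$ with $\mu(A)\neq 0$, it meets one preimage in positive measure, say $B_0=A\cap g^{-1}(B(z_k,\eps/2))$. On $B_0$ one has $\|g(t)-z_k\|<\eps/2$, and combining this with $\|g(t)\|=1$ a.e.\ from $(iv)$ gives $\|z_k\|>1-\eps/2$; choosing $y^*\in S_{Y^*}$ with $y^*(z_k)=\|z_k\|$ then yields $\re y^*(g(t))>1-\eps$, hence $|y^*(g(t))|>1-\eps$, for a.e.\ $t\in B_0$. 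Discarding the exceptional null set gives $B\subset A$ with $\mu(B)\neq 0$, which is precisely the criterion of the Proposition for $F=\{y^*\circ g\}$, i.e.\ $(ii)$. The technical points to justify are the measurability of the sets $g^{-1}(B(z_k,\eps/2))$ and the essential separability of $g$; both are standard consequences of strong measurability via Pettis' theorem, so I expect only routine bookkeeping beyond the selection idea above.
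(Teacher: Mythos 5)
Your proof is correct and follows the same route the paper intends: the paper states this corollary as an immediate consequence of Corollary~\ref{corollary:charact-generating-dual-spear} (giving $(i)\Leftrightarrow(ii)$ via $G^*(y^*)=y^*\circ g$) and the preceding Proposition on spear sets in $B_{L_\infty(\mu)}$, which are exactly the two ingredients you use. Your only addition is to spell out the essentially-separably-valued selection argument needed to recover a single functional $y^*$ in the step back from $(iv)$ to $(ii)$ --- a detail the paper leaves implicit --- and that argument is sound.
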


\begin{remark}\label{remark:finite-sigma-finite}
{\slshape  The restriction on the measure $\mu$ being finite in Corollary~\ref{cor:char-generating-from-L1} can be relaxed to being $\sigma$-finite}.\newline
\indent Indeed,  given a $\sigma$-finite measure $\mu$, there is a suitable probability measure $\nu$ such that $L_1(\mu)\equiv L_1(\nu)$ and $L_\infty(\mu,Y)\equiv L_\infty(\nu,Y)$, see \cite[Proposition~1.6.1]{CembranosMendoza} for instance).
\end{remark}

Compare Corollary~\ref{cor:char-generating-from-L1} the above result with \cite[Corollary~4.22]{KMMP-Spear} which says that $G\in\mathcal{L}(L_1(\mu),Y)$ of norm-one which is representable by $g\in L_\infty(\mu,Y)$ is a spear operator if and only if it has the alternative Daugavet property if and only if $g(t)\in \Spear(Y)$ for a.e.\ $t\in \Omega$. It is then easy to construct generating operators from $L_1(\mu)$ which do not have the alternative Daugavet property: for instance, $G\in \mathcal{L}(L_1[0,1],\ell_2^2)$ given by $G(f)=\displaystyle \int_0^1 f(t)(\cos(2\pi t),\sin(2\pi t))\, dt$ for every $f\in L_1[0,1]$.

\subsubsection{Operators arriving to $C_0(L)$}
Let $L$ be a Hausdorff locally compact topological space. It is immediate from the definition of the norm, that the set $\mathcal{A}=\{\delta_t\colon t\in L\}\subset C_0(L)^*$ is one-norming for $C_0(L)$. Hence, Corollary~\ref{corollary:charact-generating-dual-spear} reads in this case as follows.

\begin{proposition}
Let $X$ be a Banach space, let $L$ be a Hausdorff locally compact topological space, and let $G\in \mathcal{L}(X,C_0(L))$ be a norm-one operator. Then, the following are equivalent:
\begin{enumerate}[$(i)$]
\item $G$ is generating.
\item The set $\{G^*(\delta_t)\colon t\in L\}$ is a spear set of $X^*$.
\end{enumerate}
\end{proposition}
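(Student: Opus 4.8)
The plan is to deduce this statement directly from Corollary~\ref{corollary:charact-generating-dual-spear}, applied to the target space $Y=C_0(L)$ and with the family $\mathcal{A}=\{\delta_t\colon t\in L\}$ of point-evaluation functionals taken as the one-norming set. The only point that genuinely needs checking is that $\mathcal{A}$ is one-norming for $C_0(L)$; once this is in place, the asserted equivalence is simply the specialization of the equivalence $(i)\Leftrightarrow(iii)$ of that corollary.

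First I would verify that $\mathcal{A}\subset B_{C_0(L)^*}$ and that it is one-norming. Each $\delta_t$ satisfies $|\delta_t(f)|=|f(t)|\leq\|f\|_\infty$, so $\|\delta_t\|\leq 1$ and hence $\delta_t\in B_{C_0(L)^*}$. Moreover, by the very definition of the supremum norm on $C_0(L)$,
$$
\sup_{t\in L}|\delta_t(f)|=\sup_{t\in L}|f(t)|=\|f\|_\infty \qquad (f\in C_0(L)),
$$
which is exactly the condition, in the formulation recalled in Subsection~\ref{Subsect:notation}, that $\mathcal{A}$ be one-norming for $C_0(L)$.

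With this established, I would invoke Corollary~\ref{corollary:charact-generating-dual-spear} with $Y=C_0(L)$ and this choice of $\mathcal{A}$ (the hypotheses $\mathcal{A}\subset B_{Y^*}$ one-norming and $\|G\|=1$ are met). Its equivalence $(i)\Leftrightarrow(iii)$ asserts that the norm-one operator $G$ is generating if and only if $G^*(\mathcal{A})$ is a spear set of $X^*$. Since $G^*(\mathcal{A})=\{G^*(\delta_t)\colon t\in L\}$, this is precisely the equivalence between $(i)$ and $(ii)$ in the statement, so the proof concludes.

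Because everything reduces to a previously proved corollary, there is no real obstacle; the only care required is confirming that the Dirac masses form a one-norming family, which is immediate from the definition of the norm. (One harmlessly assumes $L\neq\emptyset$, since otherwise $C_0(L)=\{0\}$ admits no norm-one operator and the statement is vacuous.)
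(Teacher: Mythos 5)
Your proposal is correct and coincides with the paper's own argument: the paper likewise observes that $\{\delta_t\colon t\in L\}$ is one-norming for $C_0(L)$ (calling this ``immediate from the definition of the norm'') and then reads off the statement as the specialization of Corollary~\ref{corollary:charact-generating-dual-spear} with this choice of $\mathcal{A}$. Your extra verification that each $\delta_t$ lies in $B_{C_0(L)^*}$ and the remark about $L\neq\emptyset$ are harmless elaborations of the same route.
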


We would like to compare the result above with \cite[Proposition~4.2]{KMMP-Spear} where it is proved that $G\in \mathcal{L}(X,C_0(L))$ has the alternative Daugavet property if and only if $\{G^*(\delta_t)\colon t\in U\}$ is a spear set of $X^*$ for every open subset $U\subset L$. It is then easy to construct examples of generating operators arriving to $C_0(L)$ spaces which do not have the alternative Daugavet property. For instance, consider $G\in\mathcal{L}(c_0,c_0)$ given by $$
[Gx](n)=\begin{cases} 0 & \text{if $n$ is odd,}\\ x(n) & \text{if $n$ is even.}\end{cases}
$$

\section{Generating operators and norm-attainment}\label{Sect-normattaiment}
We discuss here when generating operators are norm-attaining. On the one hand, it is shown in \cite[Theorem 2.9]{KMMP-Spear} that every spear $x^*\in X^*$ attains its norm. So rank-one generating operators also attain their norm by Corollary~\ref{cor:charact-generatin-rank1}.

\begin{corollary}\label{cor:x*-generating-attains-norm}
Let $X$, $Y$ be Banach spaces and $G\in \Gen(X,Y)$ of rank-one. Then, $G$ attains its norm.
\end{corollary}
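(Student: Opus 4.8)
The plan is to reduce the statement about the rank-one operator $G$ to the already-established facts about spear vectors of the dual. By Corollary~\ref{cor:charact-generatin-rank1}, a rank-one operator $G$ of norm one can be written as $G = x_0^* \otimes y_0$ with $x_0^* \in S_{X^*}$ and $y_0 \in S_Y$, and the hypothesis that $G$ is generating is equivalent to $x_0^* \in \Spear(X^*)$. First I would invoke the cited result \cite[Theorem~2.9]{KMMP-Spear}, which guarantees that every spear functional attains its norm; so there exists $x_1 \in S_X$ with $|x_0^*(x_1)| = \|x_0^*\| = 1$.

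Next I would observe that the norm attainment of $G$ follows directly from the norm attainment of $x_0^*$. Indeed, for any $x \in S_X$ we have $\|Gx\| = \|x_0^*(x) y_0\| = |x_0^*(x)|\,\|y_0\| = |x_0^*(x)|$, so $\|G\| = \|x_0^*\| = 1$ and the attainment sets coincide: $x$ attains the norm of $G$ precisely when $|x_0^*(x)| = 1$. Taking the point $x_1$ produced above gives $\|Gx_1\| = |x_0^*(x_1)| = 1$, so $G$ attains its norm at $x_1$.

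I expect no genuine obstacle here, since the whole argument is a transparent chaining of two prior results. The only point requiring a little care is to make explicit the identity $\|Gx\| = |x_0^*(x)|$ for the factorization $G = x_0^* \otimes y_0$, which is immediate once one recalls the convention $[x_0^* \otimes y_0](x) = x_0^*(x) y_0$ and that $y_0$ is a unit vector; everything else is a citation. Thus the heart of the corollary is entirely carried by Corollary~\ref{cor:charact-generatin-rank1} (the structural description of rank-one generating operators as those induced by dual spear vectors) together with the norm-attainment of spear functionals from \cite[Theorem~2.9]{KMMP-Spear}.
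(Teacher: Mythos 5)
Your argument is correct and coincides with the paper's: the authors likewise combine Corollary~\ref{cor:charact-generatin-rank1} (rank-one generating operators are exactly those of the form $x_0^*\otimes y_0$ with $x_0^*\in\Spear(X^*)$) with the fact from \cite[Theorem~2.9]{KMMP-Spear} that spear functionals attain their norm. The explicit verification that $\|Gx\|=|x_0^*(x)|$ is a welcome, if routine, addition.
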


Besides, if $B_X$ contains denting points, all generating operators with domain $X$ are norm attaining by Lemma~\ref{lemma:denting-points}.

On the other hand, operators with the alternative Daugavet property are generating (see Example~\ref{examples:preliminaryones}.(4)), and there are operators with the alternative Daugavet property which do not attain their norm (see \cite[Example 8.7]{KMMP-Spear}). The construction of the cited example in \cite{KMMP-Spear} is not easy at all, but we may construct easier examples of generating operators which do not attain their norm, even with rank two.

\begin{example}\label{example:generating-rank-two-NONA}
{\slshape Consider $g\colon [0,1]\longrightarrow \ell_2^2$ given by $g(t)=(\cos t,\sin t)$ and the norm-one operator $G\in\mathcal{L}(L_1[0,1],\ell_2^2)$ represented by $g$:
$$
G(x) = \int_0^1 x(t)g(t)\, dt \qquad (x\in L_1[0,1]).
$$
Then, $G$ is generating but does not attain its norm.}
\end{example}

\begin{proof}
Observe that $G$ is generating by Corollary~\ref{cor:char-generating-from-L1} as $\|g(t)\|=1$ for every $t\in [0,1]$. To prove that $G$ does not attain its norm, recall that for an integrable complex-valued function $f$ the equality $\left|\int_0^1 f(t)dt \right|=\int_0^1|f(t)|dt$ holds if and only if there is $\lambda\in \T$ such that $f=\lambda |f|$ except for a set of zero measure. Suppose, to find a contradiction, that there is a non-zero $x\in L_1[0,1]$ satisfying $\|Gx\|=\|x\|$. Then, as $xg$ can be seen as a complex-valued function and we can identify the norm on $\ell_2^2$ with the modulus in $\C$, we have that
\begin{align*}
\left|\int_0^1 x(t)g(t)dt\right|&=\left\|\int_0^1x(t)g(t)dt\right\|\\
&=\|Gx\|=\|x\|=\int_0^1|x(t)|dt=\int_0^1|x(t)g(t)|dt.
\end{align*}
Therefore, there is $\lambda\in \T$ such that $xg=\lambda |xg|=\lambda |x|$ except for a set of zero measure. But this is impossible since $x$ takes real values and $g$ covers a non-trivial arc of the unit circumference.
\end{proof}

Example~\ref{example:generating-rank-two-NONA} can be generalized for other two-dimensional spaces $Y$, but we need some assumptions on the shape of $S_Y$. If $S_Y$ can be expressed as a finite or countable union of segments, then every generating operator $G\in\mathcal{L}(L_1[0,1],Y)$ attains its norm, leading to a complete characterization.

\begin{proposition}
Let $Y$ be a real two-dimensional space. Then, the following are equivalent:
\begin{enumerate}[$(i)$]
	\item $S_Y$ is a finite or countable union of segments.
	\item Every generating operator $G\in\mathcal{L}(L_1[0,1],Y)$ attains its norm.
\end{enumerate}
Moreover, if the previous assertions hold, we have that $B_{L_1[0,1]}=\overline{\conv}(\att(G))$ for every generating operator $G\in \mathcal{L}(L_1[0,1],Y))$.
\end{proposition}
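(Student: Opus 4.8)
The plan is to reduce everything to the geometry of the function $g$ representing $G$. Since $Y$ is finite-dimensional it has the RNP, so every $G\in\mathcal{L}(L_1[0,1],Y)$ is representable by some $g\in L_\infty([0,1],Y)$, and by Corollary~\ref{cor:char-generating-from-L1} such a $G$ is generating if and only if $\|g(t)\|=1$ for almost every $t$. The first step I would carry out is a clean norm-attainment criterion: writing $\|Gx\|=\max_{y^*\in S_{Y^*}}y^*(Gx)$ (the maximum is attained because $B_{Y^*}$ is compact) and exploiting the equality case in the triangle inequality for the scalar integral $\int_0^1 x(t)\,y^*(g(t))\,dt$ exactly as in Example~\ref{example:generating-rank-two-NONA}, one gets that $G$ attains its norm if and only if there is $y^*\in S_{Y^*}$ with $|\{t\in[0,1]\colon |y^*(g(t))|=1\}|>0$. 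Equivalently, letting $\mu=g_*(\text{Lebesgue})$ be the push-forward probability measure on $S_Y$, the operator $G$ attains its norm if and only if $\mu(F_{y^*})>0$ for some $y^*\in S_{Y^*}$, where $F_{y^*}=\{z\in S_Y\colon y^*(z)=1\}$ denotes the face of $B_Y$ exposed by $y^*$.

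Next I would analyse the geometry of $S_Y$. Let $e_1,e_2,\dots$ be the maximal nondegenerate segments contained in $S_Y$; there are at most countably many of them, since they are essentially disjoint and $S_Y$ has finite length. Put $P=\bigcup_j e_j$ and let $C=S_Y\setminus P$ be the \emph{strictly convex part}, which is a Borel ($G_\delta$) set with $e_j\cap C=\emptyset$ for all $j$. The dictionary I would record is: for $z\in C$ every exposing functional satisfies $F_{y^*}=\{z\}$, whereas the only nondegenerate faces are the edges $e_j$. With this, condition $(i)$ is equivalent to ``$C$ is at most countable'': a strictly convex arc is uncountable and a segment meets it in at most two points, so an uncountable $C$ cannot be covered by countably many segments, while $P$ together with the (countably many) corners already exhibits $S_Y$ as a countable union of segments once $C$ is countable.

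For the implication $\neg(i)\Rightarrow\neg(ii)$ I would argue as follows: if $C$ is uncountable, then being Borel it contains a Cantor set and hence supports an atomless Borel probability measure $\mu$ concentrated on $C$; realizing $\mu$ as $g_*(\text{Lebesgue})$ for a Borel map $g\colon[0,1]\to C$ (every Borel probability measure on a Polish space is the image of Lebesgue measure under a measurable map) produces a generating operator $G$. Since $\mu$ is atomless it assigns mass zero to each singleton face $\{z\}$ with $z\in C$, and mass zero to each edge $e_j$ because $e_j\cap C=\emptyset$; thus $\mu(F_{y^*})=0$ for every $y^*$, and the criterion above shows that $G$ does not attain its norm. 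I expect this measure-realization step, together with the bookkeeping that cleanly separates the edges from $C$ so that no face meets the support of $\mu$, to be the main obstacle of the proof.

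Finally, for $(i)\Rightarrow(ii)$ and the ``Moreover'' I would assume $C$ at most countable, say $C=\{c_1,c_2,\dots\}$, and take a generating $G$ represented by $g$ with $\|g(t)\|=1$ a.e. If $\mu=g_*(\text{Lebesgue})$ has an atom at some $z$, then $G$ attains its norm at a supporting functional of $z$; if $\mu$ is atomless, then $\mu(C)=0$, so $\mu\bigl(\bigcup_j e_j\bigr)=1$ and some edge carries positive mass, again giving attainment. For the final equality I would decompose $[0,1]$, up to a null set, into the pieces $g^{-1}(e_j)$ and $g^{-1}(\{c_k\})$; on each piece $g$ takes values in a single face, so for every $E$ contained in one piece both $\tfrac{1}{|E|}\mathbbm{1}_E$ and $-\tfrac{1}{|E|}\mathbbm{1}_E$ lie in $\att(G)$. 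Since an arbitrary normalized indicator $\tfrac{1}{|E|}\mathbbm{1}_E$ is a countable convex combination of the normalized indicators of the sets $E\cap g^{-1}(e_j)$ and $E\cap g^{-1}(\{c_k\})$, it belongs to $\overline{\conv}(\att(G))$; as these normalized indicators and their negatives have closed convex hull equal to $B_{L_1[0,1]}$, we conclude $\overline{\conv}(\att(G))=B_{L_1[0,1]}$.
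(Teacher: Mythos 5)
Your proof is correct, and its skeleton coincides with the paper's: both use Corollary~\ref{cor:char-generating-from-L1} to reduce everything to the representing function $g$, both obtain $(i)\Rightarrow(ii)$ together with the ``moreover'' statement by partitioning $[0,1]$ into pieces on which $g$ takes values in a single face of $B_Y$ and observing that the corresponding normalized indicators lie in $\att(G)$ and generate $B_{L_1[0,1]}$ by closed absolutely convex hull, and both obtain $\neg(i)\Rightarrow\neg(ii)$ from a Cantor set sitting inside the non-flat part of $S_Y$. The genuine difference is in how the non-attainment direction is organized. You first isolate an explicit criterion --- $G$ attains its norm if and only if some face $\{z\in S_Y\colon y^*(z)=1\}$ carries positive mass for the push-forward of Lebesgue measure under $g$ --- and then only need that push-forward to be atomless and concentrated on the strictly convex part $C$, which you obtain from the perfect set property for Borel sets plus the realization of atomless Borel probability measures as push-forwards of Lebesgue measure (both standard, e.g.\ in Kechris). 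The paper instead constructs an explicitly \emph{injective} measurable $g\colon[0,1]\to D$, composing a homeomorphic copy of the Cantor set inside $D$ with the dyadic-to-triadic injection of $[0,1]$, and concludes by counting: for any $y_0^*$ the set $\{y\in D\colon y_0^*(y)=\pm 1\}$ has at most four points, while injectivity forces $g(A)$ to be infinite for every positive-measure $A$. These are really the same mechanism --- an injective $g$ is just a hands-on witness that the push-forward measure is atomless --- but your version buys a reusable attainment criterion and avoids the explicit coding, at the price of invoking slightly heavier descriptive-set-theoretic facts; the measure-realization step you flagged as the main obstacle is precisely what the paper handles by hand. No gaps.
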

\begin{proof}
$(i)\Rightarrow(ii)$ Let $G\in\mathcal{L}(L_1[0,1],Y)$ be a generating operator. Since $Y$ has dimension two, $G$ can be represented by
	$$
	G(x)=\int_0^1 x(t)g(t) \,dt \qquad (x\in L_1[0,1])
	$$
	for a suitable $g\in L_\infty([0,1],Y)$ with $\|g\|_\infty=1$ and $\|g(t)\|=1$ almost everywhere by Corollary~\ref{cor:char-generating-from-L1}. Since $S_Y$ is a finite or countable union of segments, we may find a partition $\pi$ of $[0,1]$  in measurable subsets of positive measure such that $g(A)$ is contained in a segment of $S_Y$ almost everywhere for every $A\in \pi$. Then, for every $\Delta \in \pi$ and every measurable subset $A \subset \Delta$ of positive measure, consider $x_A=\frac{1}{|A|} \mathbbm{1}_{A}\in S_{L_1[0,1]}$, where $|A|$ denotes the Lebesgue measure of $A$, and let us show that $G$ attains its norm at $x_A$. Indeed, as $g(A)$ is contained in a segment of $S_Y$ a.e., there exists $y^*\in S_{Y^*}$ such that $y^*(g(t))=1$ a.e.\ in $A$, thus
	$$
	\|G(x_A)\|\geq y^*(Gx_A)= y^*\left( \int_0^1 \dfrac{1}{|A|}\mathbbm{1}_{A}(t) g(t) \, dt\right)= \dfrac{1}{|A|} \int_A y^*(g(t)) \, dt =1,
	$$
	and so $\|G(x_A)\|=1$ as desired.\\
	Moreover, for this $\pi$
	$$
	B_{L_1[0,1]}\subseteq{\overline{\aconv}\left(\left\{\frac{1}{|A|} \mathbbm{1}_{A} \colon A \subset \Delta, \Delta \in\pi,  |A| > 0\right\}\right)}\subseteq\overline{\conv}(\att(G)),
	$$
	hence $B_{L_1[0,1]}=\overline{\conv}(\att(G))$.
	
	To prove $(ii)\Rightarrow(i)$, suppose that $S_Y$ cannot be written as a finite or countable union of segments and let us construct a generating operator $G\in\mathcal{L}(L_1[0,1],Y)$ not attaining its norm. Observe that the number of open maximal segments in $S_Y$ is finite or countable as $S_Y$ is a curve on a two-dimensional space with finite length. Let $\Delta_n$, $n\in \N$, be the open maximal segments in $S_Y$ and denote $D=S_Y\setminus \left(\cup_{n\in\N} \Delta_n\right)$. Clearly, $D$ is an uncountable metric compact subset of $S_Y$, hence it contains a homeomorphic copy of the Cantor set $K$ \cite[Chapter~I]{Kechris} and so there exists an injective continuous function $\varphi\colon K\longrightarrow D$. Now, let us construct an injection from $[0,1]$ to $K$. To do so, recall that the Cantor set is the set of numbers of $[0,1]$ that have a triadic representation consisting purely of $0$'s and $2$'s, that is,
	$$
	K=\left\{y\in[0,1] \colon y=\sum_{k=1}^{\infty} \dfrac{\beta_k}{3^k}, \ \beta_k=0,2 \right\}.
	$$
	Every $t\in[0,1]$ has a dyadic representation:
	$$
	t=\sum_{k=1}^\infty \dfrac{\alpha_k(t)}{2^k},
	$$
	where $\alpha_k(t)\in\{0,1\}$. This representation is unique except for a countable subset of $[0,1]$ consisting of those numbers with finite dyadic representation. Consider $\phi\colon [0,1]\longrightarrow K$ given by
	$$
	\phi(t)=\sum_{k=1}^\infty \dfrac{2\alpha_k(t)}{3^k} \qquad (t\in[0,1]),
	$$
	where $\alpha_k(t)\in\{0,1\}$ are the coefficients in the dyadic representation of $t$. The function $\phi$ is well-defined almost everywhere on $[0,1]$, injective, measurable, and its image lies on $K$. Then, the function $g=\varphi \circ \phi\colon [0,1]\longrightarrow D$ is well-defined almost everywhere on $[0,1]$, $g\in L_\infty[0,1]$, and it is injective. Consider the operator $G\colon L_1[0,1]\longrightarrow Y$ defined by
	$$
	G(x)=\int_0^1 x(t)g(t) \, dt \qquad (x\in L_1[0,1]).
	$$
	$G$ is generating by Corollary~\ref{cor:char-generating-from-L1} as $\|g(t)\|=1$ almost everywhere but it does not attain its norm. Indeed, suppose on the contrary that there is a non-zero $x\in L_1[0,1]$ such that
	$$
	\|G(x)\|=\left\|\int_0^1 x(t)g(t) \, dt\right\|=\int_0^1 |x(t)| \, dt = \|x\|.
	$$
	We may find $y_0^*\in S_{Y^*}$ such that
	$$\int_0^1 |x(t)|\, dt=\left\|\int_0^1 x(t)g(t)\,dt\right\|= y_0^*\left(\int_0^1 x(t)g(t)\, dt\right)=\int_0^1 x(t)y_0^*(g(t))\,dt.$$
	This equality implies the existence of a measurable subset $A$ of $[0,1]$ with positive measure such that $|x(t)|=x(t)y_0^*(g(t))$ for every $t\in A$, thus $y_0^*(g(t))\in\{1,-1\}$ for every $t\in A$. Note that $g(A)\subseteq \big\{y\in D\colon y_0^*(y)\in\{1,-1\}\big\}$. However, this leads to a contradiction. On the one hand, the latter set has at most four elements as $D$ does not contain open segments of $S_Y$. On the other hand, since $g$ is injective and $A$ has positive measure, $g(A)$ has infinitely many elements. Thus, $G$ cannot attain its norm.
\end{proof}

The next example shows that, even in the case of norm-attaining operators, the set $\att(G)$ cannot be used to characterize when $G$ is generating outside the case when $X$ is reflexive and $G$ is compact covered by Proposition~\ref{prop:X-reflexive-G-compact}.

\begin{example}\label{example:generatingNonNA-convattemptyinterior}
{\slshape Let $G\in\mathcal{L}(X,Y)$ be a generating operator between two Banach spaces $X$ and $Y$ such that it does not attain its norm. Then, the operator $\widetilde{G}\colon X\oplus_1 \K\longrightarrow Y\oplus_1 \K$ defined by $\widetilde{G}(x,\lambda)=(Gx,\lambda)$ is generating by Proposition~\ref{prop:stability-sums} and attains its norm, but $\overline{\conv}(\att(\widetilde{G}))=\overline{\conv}(\{(0,\lambda)\colon \lambda\in \T\})=\{(0,\lambda)\colon \lambda\in B_{\K}\}$ does not contain any ball of $X\oplus_1\K$.}
\end{example}

The following result characterizes the possibility to construct a generating operator not attaining its norm acting from a given Banach space which somehow extend Example~\ref{example:generating-rank-two-NONA}.

\begin{theorem}\label{theorem:char-when-X-domain-generating-NONA}
Let $X$ be a Banach space, the following are equivalent:
\begin{enumerate}[$(i)$]
\item There exists a Banach space $Y$ and a norm-one operator $G\in\mathcal{L}(X,Y)$ such that $G$ is generating but $\att(G)=\emptyset$.
\item There exists a spear set $\mathcal{B}\subseteq B_{X^*}$ such that $\sup\limits_{x^*\in \mathcal{B}} |x^*(x)|<1$ for every $x\in S_X$.
\end{enumerate}
\end{theorem}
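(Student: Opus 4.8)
The plan is to prove the equivalence by translating the geometric condition ``$G$ is generating and does not attain its norm'' into dual language via the characterization already established in Corollary~\ref{corollary:charact-generating-dual-spear}, namely that $G$ is generating if and only if $G^*(B_{Y^*})$ is a spear set of $X^*$. The natural candidate for the spear set $\mathcal{B}$ in $(ii)$ is precisely $\mathcal{B}=G^*(B_{Y^*})$, so the content of the theorem is essentially that the \emph{norm non-attainment} of $G$ corresponds exactly to the condition $\sup_{x^*\in\mathcal{B}}|x^*(x)|<1$ for all $x\in S_X$.

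For the implication $(i)\Rightarrow(ii)$, I would set $\mathcal{B}:=G^*(B_{Y^*})\subseteq B_{X^*}$. Since $G$ is generating, Corollary~\ref{corollary:charact-generating-dual-spear} gives that $\mathcal{B}$ is a spear set of $X^*$. It then remains to verify the separation condition. Fix $x\in S_X$; then
\[
\sup_{x^*\in\mathcal{B}}|x^*(x)|=\sup_{y^*\in B_{Y^*}}|(G^*y^*)(x)|=\sup_{y^*\in B_{Y^*}}|y^*(Gx)|=\|Gx\|,
\]
and since $G$ does not attain its norm we have $\|Gx\|<1$ for every $x\in S_X$, which is exactly the desired strict inequality. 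This direction is almost immediate once the right set is chosen.

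For the converse $(i)\Leftarrow(ii)$, the task is to \emph{construct} from an abstract spear set $\mathcal{B}$ a Banach space $Y$ and a generating operator $G$ realizing $\mathcal{B}$ as $G^*(B_{Y^*})$ while failing to attain its norm. The natural construction is to take $Y$ to be a predual-type object built from $\mathcal{B}$: concretely, consider the map $G\colon X\to C_b(\mathcal{B})$ (or into $\ell_\infty(\mathcal{B})$, or into the closed subspace generated by the evaluation functionals) given by $(Gx)(x^*)=x^*(x)$, so that $\|Gx\|=\sup_{x^*\in\mathcal{B}}|x^*(x)|$. With an appropriate choice of target space whose dual ball is $w^*$-generated by the point evaluations at elements of $\mathcal{B}$, one gets $G^*(B_{Y^*})$ equal to (the $w^*$-closed absolutely convex hull of) $\mathcal{B}$, which is a spear set precisely because $\mathcal{B}$ is; hence $G$ is generating by Corollary~\ref{corollary:charact-generating-dual-spear}. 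The separation hypothesis $\sup_{x^*\in\mathcal{B}}|x^*(x)|<1$ for all $x\in S_X$ forces $\|Gx\|<1$ on $S_X$, so $G$ does not attain its norm, and one checks $\|G\|=1$ from $\sup_{x^*\in\mathcal{B}}\|x^*\|$ being $1$ (which follows from $\mathcal{B}$ being a spear set).

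The main obstacle I anticipate is the converse construction: one must choose the range space $Y$ carefully so that $G^*(B_{Y^*})$ is exactly the spear set $\mathcal{B}$ (up to taking $w^*$-closed absolutely convex hull, which preserves being a spear set) rather than something larger, and so that the norm of $G$ is genuinely $1$ while being unattained. A clean way around subtleties is to take $Y=\overline{\mathrm{span}}^{\,\|\cdot\|}\{e_{x^*}:x^*\in\mathcal{B}\}$ inside a suitable $C_0(L)$ or $\ell_\infty$ space and identify $B_{Y^*}$ with the $w^*$-closed absolutely convex hull of $\{\pm\delta_{x^*}\}$; one then invokes Remark~\ref{rem:spearset-only-for-SX} and Theorem~\ref{thm:charact-rGenerating-spear-set} with $r=1$ to confirm the spear-set property transfers correctly. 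Verifying that no $x\in S_X$ achieves the supremum---i.e.\ that the strict inequality in $(ii)$ is not upgraded to an attained maximum by the $w^*$-closure---is the delicate point, and it is handled by noting that $\|Gx\|$ depends only on the values $|x^*(x)|$ for $x^*\in\mathcal{B}$ itself, so the hypothesis directly yields $\|Gx\|<1$.
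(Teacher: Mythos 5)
Your proposal is correct and follows essentially the same route as the paper: the forward direction via $\mathcal{B}=G^*(B_{Y^*})$ is identical, and for the converse the paper also uses the evaluation map $G\colon X\to\ell_\infty(\mathcal{B})$, $(Gx)(x^*)=x^*(x)$. The one worry you raise (that $G^*(B_{Y^*})$ might be strictly larger than $\mathcal{B}$) is harmless: the paper only checks $G^*(B_{\ell_\infty(\mathcal{B})^*})\supseteq\mathcal{B}$ by pulling back the unit vectors of $\ell_1(\mathcal{B})$, since any subset of $B_{X^*}$ containing a spear set is itself a spear set.
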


\begin{proof}\parskip=0ex
	$(i)\Rightarrow(ii)$ Taking $\mathcal{B}=G^*(B_{Y^*})$, since $G$ is generating, we can use Corollary~\ref{corollary:charact-generating-dual-spear} to deduce that $\mathcal{B}$ is a spear set. Besides, as $G$ does not attain its norm, we have that
	$$
	1>\|G(x)\|=\sup_{y^*\in B_{Y^*}} |y^*(Gx)|=\sup_{y^*\in B_{Y^*}} |(G^*y^*)(x)|
	=\sup_{x^*\in \mathcal{B}} |x^*(x)|
	$$
	for every $x\in S_X$.
	
	$(ii)\Rightarrow(i)$  Consider $Y=\ell_\infty(\mathcal{B})$ and $G\colon X\longrightarrow\ell_\infty(\mathcal{B})$ defined by
	$$
	(Gx)(x^*)=x^*(x) \qquad (x^*\in X^*,\, x\in X).
	$$
	On the one hand, for $x\in S_X$, we have that
	$$
	\|G(x)\|=\sup_{x^*\in \mathcal{B}} |(Gx)(x^*)|=\sup_{x^*\in \mathcal{B}} |x^*(x)|<1.
	$$
	On the other hand, using that $\mathcal{B}$ is a spear set, for every $\eps>0$ we may find $x^*\in \mathcal{B}$ with $\|x^*\|>1-\eps$ and so
	$$
	\|G\|=\sup_{x\in B_X} \|G(x)\|\geq \sup_{x\in B_X} |(Gx)(x^*)|= \sup_{x\in B_X} |x^*(x)|=\|x^*\|>1-\eps.
	$$
	Therefore, $\|G\|=1$ but the norm is not attained.
	
	To show that $G$ is generating, we start claiming that, for every $g\in \ell_1(\mathcal{B})\subset \ell_\infty(\mathcal{B})^*$, we have
	$$
	G^*(g)=\sum_{x^*\in \mathcal{B}} g(x^*)x^*\in X^*.
	$$
	Indeed, given $g\in \ell_1(\mathcal{B})$, observe that
	$$
	g(f)=\sum_{x^*\in \mathcal{B}} g(x^*)f(x^*) \qquad  (f\in \ell_\infty(\mathcal{B}))
	$$
	and
	$$
	[G^*(g)](x)=g(Gx)=\sum_{x^*\in \mathcal{B}} g(x^*)(Gx)(x^*)=\sum_{x^*\in \mathcal{B}} g(x^*)x^*(x) \qquad (x\in X),
	$$
	so $G^*(g)=\sum_{x^*\in \mathcal{B}} g(x^*)x^*$. Now, fixed $x_0^*\in \mathcal{B}$, define $g_0\in \ell_\infty (\mathcal{B})$ by
	$$
	g_0(x^*)=\left\{
	\begin{array}{lr}
		1 \quad & \textnormal{ if } x^*=x_0^*\\
		0 \quad & \textnormal{ if } x^*\neq x_0^*
	\end{array}\right.
	$$
	which clearly satisfies $G^*(g_0)=x_0^*$. Therefore, by the arbitrariness of $x_0^*\in \mathcal{B}$, we get $G^*(B_{\ell_{\infty}(\mathcal{B})^*})\supset \mathcal{B}$, so $G^*(B_{\ell_{\infty}(\mathcal{B})^*})$ is a spear set and $G$ is generating by Corollary~\ref{corollary:charact-generating-dual-spear}.
\end{proof}

The above proof, when read pointwise, allows to give a characterization of those points at which every generating operator attains its norm.

\begin{proposition}\label{prop:charact-point-of-norm-attainment-Gen-op}
Let $X$ be a Banach space and $x_0 \in S_X$. Then, the following are equivalent:
\begin{enumerate}

\item[$(i)$] For every Banach space $Y$ and for every generating operator $G\in S_{\mathcal{L}(X,Y)}$ one has $\|Gx_0\|=1$.

\item[$(ii)$] The equality $\sup\limits_{x^*\in \mathcal{B}} |x^*(x_0)|=1$ holds for every spear set $\mathcal{B}\subseteq B_{X^*}$.
\end{enumerate}
\end{proposition}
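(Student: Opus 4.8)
The statement is the pointwise version of Theorem~\ref{theorem:char-when-X-domain-generating-NONA}, so the plan is to mirror that proof's two directions, replacing the global condition ``$\att(G)=\emptyset$'' by the local condition ``$\|Gx_0\|<1$'' at the fixed point $x_0$. It is cleaner to prove the contrapositive of each implication, since both $(i)$ and $(ii)$ are universally quantified statements. Concretely, I would show: $\neg(ii)\Rightarrow\neg(i)$, that is, if there exists a spear set $\mathcal{B}\subseteq B_{X^*}$ with $\sup_{x^*\in\mathcal{B}}|x^*(x_0)|<1$, then one can build a generating operator that fails to have norm one at $x_0$; and $\neg(i)\Rightarrow\neg(ii)$, that is, a generating operator $G$ with $\|Gx_0\|<1$ produces a spear set on which $x_0$ is not asymptotically normed.

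For $\neg(ii)\Rightarrow\neg(i)$, I would use the \emph{same} construction as in the proof of Theorem~\ref{theorem:char-when-X-domain-generating-NONA}. Given a spear set $\mathcal{B}\subseteq B_{X^*}$ with $\sup_{x^*\in\mathcal{B}}|x^*(x_0)|<1$, set $Y=\ell_\infty(\mathcal{B})$ and define $G\colon X\longrightarrow \ell_\infty(\mathcal{B})$ by $(Gx)(x^*)=x^*(x)$. The verbatim argument there shows $\|G\|=1$ and, crucially, that $G$ is generating because $G^*(B_{\ell_\infty(\mathcal{B})^*})\supseteq\mathcal{B}$ is a spear set of $X^*$, so Corollary~\ref{corollary:charact-generating-dual-spear} applies. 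The only new observation needed is the pointwise one:
$$
\|Gx_0\|=\sup_{x^*\in\mathcal{B}}|(Gx_0)(x^*)|=\sup_{x^*\in\mathcal{B}}|x^*(x_0)|<1,
$$
so $\|Gx_0\|<1$ and $(i)$ fails. This direction is essentially a citation of the already-written construction with one extra line.

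For $\neg(i)\Rightarrow\neg(ii)$, suppose $Y$ and a generating $G\in S_{\mathcal{L}(X,Y)}$ are given with $\|Gx_0\|<1$. Then $\mathcal{B}:=G^*(B_{Y^*})\subseteq B_{X^*}$ is a spear set by Corollary~\ref{corollary:charact-generating-dual-spear}, and
$$
\sup_{x^*\in\mathcal{B}}|x^*(x_0)|=\sup_{y^*\in B_{Y^*}}|(G^*y^*)(x_0)|=\sup_{y^*\in B_{Y^*}}|y^*(Gx_0)|=\|Gx_0\|<1,
$$
so this spear set witnesses the failure of $(ii)$. This is the exact pointwise reading of the displayed chain in the $(i)\Rightarrow(ii)$ part of Theorem~\ref{theorem:char-when-X-domain-generating-NONA}.

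I do not expect a genuine obstacle here: the whole content is already contained in Theorem~\ref{theorem:char-when-X-domain-generating-NONA}, and the claim in the paragraph preceding the statement (``when read pointwise'') makes explicit that the two proofs are the same up to localizing the norm-attainment condition to $x_0$. The only point requiring a line of care is verifying that $G$ built in the first direction is indeed generating independently of the behaviour at $x_0$ --- but this is handled by the spear-set condition $G^*(B_{Y^*})\supseteq\mathcal{B}$, which does not interact with the value of $\sup_{x^*\in\mathcal{B}}|x^*(x_0)|$. Thus the proof reduces to transcribing the relevant portions of the earlier argument and inserting the pointwise supremum computation.
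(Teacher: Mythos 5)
Your proof is correct and follows essentially the same route as the paper: the direction $\neg(ii)\Rightarrow\neg(i)$ reuses the $\ell_\infty(\mathcal{B})$ construction from Theorem~\ref{theorem:char-when-X-domain-generating-NONA} exactly as the paper does, and your $\neg(i)\Rightarrow\neg(ii)$ is just the contrapositive formulation of the paper's direct computation $\|Gx_0\|=\sup_{x^*\in G^*(B_{Y^*})}|x^*(x_0)|$ via Corollary~\ref{corollary:charact-generating-dual-spear}. No gaps.
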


\begin{proof}
$(ii)\Rightarrow(i)$ Given a generating operator $G$, $\mathcal{B}=G^*(B_{Y^*})$ is a spear set by Corollary~\ref{corollary:charact-generating-dual-spear} so
$$
\|Gx_0\|=\sup\limits_{y^*\in B_{Y^*}} |y^*(Gx_0)|=\sup\limits_{x^*\in \mathcal{B}} |x^*(x_0)|=1.
$$

$(i)\Rightarrow(ii)$ Suppose that $(ii)$ does not hold. Then, there is a spear set $\mathcal{B}\subseteq B_{X^*}$ such that $\sup\limits_{x^*\in \mathcal{B}} |x^*(x_0)|<1$. Now, the operator $G\colon X\longrightarrow\ell_\infty(\mathcal{B})$ defined by
	$$
	(Gx)(x^*)=x^*(x) \qquad (x^*\in X^*,\, x\in X)
	$$
	is generating (as shown in the proof of Theorem~\ref{theorem:char-when-X-domain-generating-NONA})
	and satisfies
	$$
	\|G(x_0)\|=\sup_{x^*\in \mathcal{B}} |(Gx_0)(x^*)|=\sup_{x^*\in \mathcal{B}} |x^*(x_0)|<1.
	$$
	Therefore, $(i)$ does not hold.
\end{proof}

\section{The set of all generating operators}\label{section:setGen(X,Y)}
Our aim here is to study the set $\Gen(X,Y)$ of all generating operators between the Banach spaces $X$ and $Y$. Recall, on the one hand, that $\Id_X\in \Gen(X,X)$ for every Banach space $X$, so $\Gen(X,X)\neq \emptyset$ for every Banach space $X$. On the other hand, recall that Corollary~\ref{cor:charact-generatin-rank1} shows that $\Gen(X,\K)=\Spear(X^*)$, so $\Gen(X,\K)$ is empty for many Banach spaces $X$: those for which $\Spear(X^*)=\emptyset$ as uniformly smooth spaces, strictly convex spaces, or real smooth spaces with dimension at least two (see \cite[Proposition~2.11]{KMMP-Spear}). We will be interested in finding conditions to ensure that $\Gen(X,Y)$ is non-empty and, in those cases, to study how big the set $\Gen(X,Y)$ can be. We start with an easy observation on $\Gen(X,Y)$.

\begin{proposition}\label{prop:GenXY-norm-closed}
Let $X$, $Y$ be Banach spaces. Then, $\Gen(X,Y)$ is norm-closed.
\end{proposition}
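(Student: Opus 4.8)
The plan is to show that the complement of $\Gen(X,Y)$ in $S_{\mathcal{L}(X,Y)}$ is open, equivalently that a norm-limit of generating operators is generating. The cleanest route uses the characterization from Corollary~\ref{corollary:characterization-generating}: an operator $G\in S_{\mathcal{L}(X,Y)}$ is generating if and only if $\overline{\conv}(\att(G,\delta))=B_X$ for every $\delta>0$. So I would take a sequence $\{G_n\}\subset \Gen(X,Y)$ with $\|G_n-G\|\to 0$ (first noting $\|G\|=1$, as $S_{\mathcal{L}(X,Y)}$ is closed), fix $\delta>0$, and prove $\overline{\conv}(\att(G,\delta))=B_X$.

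The key observation is an inclusion between $\delta$-attainment sets under small perturbations: if $\|G_n-G\|<\eta$, then for any $x\in S_X$ with $\|G_n x\|>1-(\delta-\eta)$ one has
\[
\|Gx\|\geq \|G_nx\|-\|G_n-G\|>1-(\delta-\eta)-\eta=1-\delta,
\]
so $\att(G_n,\delta-\eta)\subseteq \att(G,\delta)$ whenever $0<\eta<\delta$. Now fix $\delta>0$, choose $n$ large enough that $\eta:=\|G_n-G\|<\delta/2$, and set $\delta':=\delta-\eta>\delta/2>0$. Since $G_n$ is generating, Corollary~\ref{corollary:characterization-generating}.iv gives $\overline{\conv}(\att(G_n,\delta'))=B_X$. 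Combining with the inclusion above yields
\[
B_X=\overline{\conv}(\att(G_n,\delta'))\subseteq \overline{\conv}(\att(G,\delta))\subseteq B_X,
\]
so $\overline{\conv}(\att(G,\delta))=B_X$. As $\delta>0$ was arbitrary, $G$ is generating by Corollary~\ref{corollary:characterization-generating}.

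I do not anticipate a genuine obstacle here; the argument is essentially a continuity estimate wrapped around the $\overline{\conv}(\att(G,\delta))$ characterization. The only point requiring a little care is the bookkeeping between the perturbation size $\eta$ and the tolerance $\delta$: one must pick $\eta$ strictly smaller than $\delta$ so that $\delta'=\delta-\eta$ remains positive and the generating property of $G_n$ can be invoked at level $\delta'$. A sequential formulation suffices since the norm topology on $\mathcal{L}(X,Y)$ is metrizable, so there is no need to argue with nets. Alternatively, one could phrase the same estimate directly in terms of the relative seminorm, using $\|T\|_{G,\delta}\geq \|T\|_{G_n,\delta'}$ for the perturbed attainment sets and passing to the infimum, but the convex-hull version is the most transparent.
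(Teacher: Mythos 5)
Your proof is correct. The only point needing care is exactly the one you flag: since $G_n$ is generating at \emph{every} positive level, you may invoke it at level $\delta'=\delta-\eta>0$, and the elementary estimate $\|Gx\|\geq\|G_nx\|-\|G_n-G\|$ for $x\in S_X$ gives the inclusion $\att(G_n,\delta-\eta)\subseteq\att(G,\delta)$, from which $B_X=\overline{\conv}(\att(G_n,\delta-\eta))\subseteq\overline{\conv}(\att(G,\delta))$ follows. The remark that $\|G\|=1$ is needed so that the limit is eligible to be generating at all, and you address it. Your route is genuinely different from the paper's: the paper works entirely on the dual side, using Corollary~\ref{corollary:charact-generating-dual-spear} and the triangle inequality to show that
$\max_{\theta\in\T}\sup_{y^*\in B_{Y^*}}\|G_0^*(y^*)+\theta x^*\|\geq 1+\|x^*\|-1/n$ for every $n$, so that the spear-set inequality for $G_0^*(B_{Y^*})$ survives in the limit. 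The dual argument buys a single perturbation estimate with no $\delta$-versus-$\eta$ bookkeeping and quantifies how close a near-generating operator's adjoint image is to being a spear set; your primal argument is more elementary, avoids the adjoint and spear-set machinery entirely, and makes visible the stability of the $\delta$-attainment sets themselves, which is the geometric content behind both proofs. Either version is a complete proof of the proposition.
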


\begin{proof}
Fixed $G_0\in \overline{\Gen(X,Y)}$ and $n\in \N$, there is $G_n\in\Gen(X,Y)$ such that $\|G_0-G_n\|<1/n$ and, therefore, $\|G_0^*-G_n^*\|<1/n$. Observe now that, for $x^*\in X^*$, we have
\begin{align*}
\max_{\theta\in\T}\sup_{y^*\in B_{Y^*}} \|G_0^*(y^*)+\theta x^*\|
	&\geq \max_{\theta\in\T}\sup_{y^*\in B_{Y^*}}\|G_n^*(y^*)+\theta x^*\| -\sup_{y^*\in B_{Y^*}}\|(G_0^*-G_n^*)(y^*)\|
	\\ &=\|G_n^*(B_{Y^*})+\T x^*\|-\|G_0^*-G_n^*\|> 1+\|x^*\|-1/n,
\end{align*}
where the last inequality holds by Corollary~\ref{corollary:charact-generating-dual-spear} since $G_n$ is generating. Now, it follows again from Corollary~\ref{corollary:charact-generating-dual-spear} that $G_0\in\nolinebreak\Gen(X,Y)$.
\end{proof}

Next, we study the problem of finding out whether $\Gen(X,Y)$ is empty or not for the Banach spaces $X$ and $Y$ from two points of view: fixing the space $Y$ and fixing the space $X$.

\subsection{\texorpdfstring{$\boldsymbol{\Gen(X,Y)}$}{Gen(X,Y)} when \texorpdfstring{$\boldsymbol{Y}$}{Y} is fixed}
We will show that for every Banach space $Y$ there is another Banach space $X$ such that $\Gen(X,Y)=\emptyset$.

\begin{proposition}\label{proposition:foreveryYthereisXgenempty}
For every Banach space $Y$ there is a Banach space $X$ such that $\Gen(X,Y)=\emptyset$.
\end{proposition}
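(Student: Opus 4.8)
The plan is to reduce the problem to a counting argument about spear sets via the dual characterization in Corollary~\ref{corollary:charact-generating-dual-spear}: a norm-one operator $G\in\mathcal{L}(X,Y)$ is generating if and only if $G^*(B_{Y^*})$ is a spear set of $X^*$. So it suffices to exhibit, for a given $Y$, a space $X$ for which \emph{no} subset of $X^*$ of the form $G^*(B_{Y^*})$ can be a spear set. The quantity I would use to separate the two sides is the density character. On the one hand, $G^*\colon Y^*\longrightarrow X^*$ is norm-continuous, so $\dens\bigl(G^*(B_{Y^*})\bigr)\leq \dens(B_{Y^*})=\dens(Y^*)$ regardless of $G$. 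On the other hand, I will choose $X$ so that \emph{every} spear set of $X^*$ is forced to be large, namely of density character $\dens(X^*)$, and then simply take $\dens(X^*)>\dens(Y^*)$.

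The candidate is a Hilbert space. Fix a cardinal $\kappa$ with $\kappa\geq\aleph_0$ and $\kappa>\dens(Y^*)$, and set $X=\ell_2(\Gamma)$ with $|\Gamma|=\kappa$, so that $X^*$ is isometric to $\ell_2(\Gamma)$, a uniformly convex space with $\dens(X^*)=\kappa$. The geometric core is the claim that any spear set $F\subseteq B_{X^*}$ is dense in $S_{X^*}$ after multiplication by unimodular scalars. Indeed, given $x\in S_{X^*}$ and $\eps>0$, the spear-set identity applied to $x$ yields $\theta\in\T$ and $z\in F$ with $\|z+\theta x\|>2-\eps$, and the parallelogram law gives
\[
\|z-\theta x\|^2=2\|z\|^2+2\|x\|^2-\|z+\theta x\|^2\leq 4-(2-\eps)^2<4\eps,
\]
whence $\|\overline{\theta}z-x\|=\|z-\theta x\|<2\sqrt{\eps}$. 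Thus the set $\T F:=\{\lambda z\colon \lambda\in\T,\ z\in F\}$ is norm-dense in $S_{X^*}$. Since $(\lambda,z)\mapsto \lambda z$ is a continuous surjection of $\T\times F$ onto $\T F$ and $\T$ is separable, $\dens(\T F)\leq\dens(F)$ once $F$ is infinite; consequently $\dens(F)\geq\dens(\overline{\T F})=\dens(S_{X^*})=\kappa$ (and $F$ is necessarily infinite, as a finite set cannot be dense, up to scalars, in an infinite-dimensional sphere).

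Putting the two estimates together, any spear set of $X^*$ has density character at least $\kappa>\dens(Y^*)\geq \dens\bigl(G^*(B_{Y^*})\bigr)$, so $G^*(B_{Y^*})$ is never a spear set of $X^*$; by Corollary~\ref{corollary:charact-generating-dual-spear} this means $\Gen(X,Y)=\emptyset$. I expect the main obstacle to be the geometric lemma that spear sets in a uniformly convex space must be dense in the sphere — here it is trivialized by the parallelogram law, but it is the step that genuinely uses the choice of $X$ as a Hilbert space. The remaining points (that norm-continuous images do not increase density character and that $\dens(B_{Y^*})=\dens(Y^*)$) are routine bookkeeping. A possible variant, should one prefer to avoid transfinite density characters for finite-dimensional or separable $Y$, is to note that when $\dens(Y^*)$ is finite or countable one may already take $X=\ell_2$ or $X=\ell_2(\Gamma)$ with $|\Gamma|=\dens(Y^*)^+$, but the uniform argument above covers all cases at once.
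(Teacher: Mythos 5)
Your proof is correct. The overall strategy coincides with the paper's: take $X=\ell_2(\Lambda)$ with $\dens(X^*)>\dens(Y^*)$, show that a generating operator would force $G^*(B_{Y^*})$ to be (up to closure and unimodular scalars) all of $S_{X^*}$, and derive a contradiction by comparing density characters, using that $G^*$ cannot increase the density character. Where you differ is in the key lemma that produces the density of $G^*(B_{Y^*})$ in the sphere. The paper proves a general statement (its Lemma~\ref{lemma:Gen-and-Frechet-diff}): if the norm of $X^*$ is Fr\'echet differentiable at a strongly exposed point $x_0^*\in S_{X^*}$ and $G$ is generating, then $x_0^*\in\overline{G^*(B_{Y^*})}$; the proof goes through the \v{S}mulyan test and small-diameter slices. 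You instead work directly with the spear-set characterization from Corollary~\ref{corollary:charact-generating-dual-spear} and exploit the parallelogram law: $\|z+\theta x\|$ close to $2$ forces $\overline{\theta}z$ close to $x$ in a Hilbert space, so any spear set of $\ell_2(\Gamma)$ is $\T$-densely spread over the sphere. Your route is shorter and entirely self-contained for the Hilbert-space choice of $X$ (uniform convexity does all the work), at the cost of being tied to that choice; the paper's lemma is a reusable general tool that applies to any domain whose dual has a Fr\'echet differentiable norm at strongly exposed points. The bookkeeping steps you flag (continuous images do not raise density character, $\dens(\T F)\leq\dens(F)$ for infinite $F$, $\dens(B_{Y^*})=\dens(Y^*)$) are all fine.
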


We need the following obstructive result for the existence of generating operators that will serve to our purpose.

\begin{lemma}\label{lemma:Gen-and-Frechet-diff}
Let $X$, $Y$ be Banach spaces and let $G\in \Gen(X,Y)$. If the norm of $X^*$ is Fr\'{e}chet differentiable at $x_0^*\in S_{X^*}$ and $x_0^*$ is strongly exposed, then $x_0^*\in \overline{G^*(B_{Y^*})}$.
\end{lemma}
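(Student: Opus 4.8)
The plan is to translate everything into the language of spear sets and then bridge the two hypotheses through \v{S}mulyan's lemma. Since $G\in\Gen(X,Y)$, Corollary~\ref{corollary:charact-generating-dual-spear} guarantees that $\mathcal{B}:=G^*(B_{Y^*})$ is a spear set of $X^*$; note also that $\mathcal{B}$ is closed under multiplication by $\T$, because $G^*(\theta y^*)=\theta G^*(y^*)$. Applying the spear-set equality to $x_0^*\in S_{X^*}$ gives $\max_{\theta\in\T}\sup_{y^*\in B_{Y^*}}\|G^*(y^*)+\theta x_0^*\|=1+\|x_0^*\|=2$, so there are sequences $(\theta_n)$ in $\T$ and $(y_n^*)$ in $B_{Y^*}$ with $\|G^*(y_n^*)+\theta_n x_0^*\|\to 2$. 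Putting $w_n^*:=\bar{\theta_n}\,G^*(y_n^*)=G^*(\bar{\theta_n}y_n^*)\in\mathcal{B}$ and using $|\theta_n|=1$, we obtain $\|w_n^*+x_0^*\|\to 2$ with $w_n^*\in B_{X^*}$. It therefore suffices to prove that $w_n^*\to x_0^*$ in norm, for then $x_0^*\in\overline{\mathcal{B}}=\overline{G^*(B_{Y^*})}$.

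Next I would manufacture norming functionals in the bidual. For each $n$, choose $\Lambda_n\in S_{X^{**}}$ with $\Lambda_n(w_n^*+x_0^*)=\|w_n^*+x_0^*\|$; writing this as $\re\Lambda_n(w_n^*)+\re\Lambda_n(x_0^*)\to 2$ and using that each summand is at most $1$, I conclude $\re\Lambda_n(w_n^*)\to 1$ and $\re\Lambda_n(x_0^*)\to 1$. This is where Fr\'echet differentiability enters: by \v{S}mulyan's lemma applied to the norm of $X^*$ (whose dual is $X^{**}$), any sequence in $B_{X^{**}}$ that nearly norms $x_0^*$ must converge in norm to the derivative $\Lambda\in S_{X^{**}}$, i.e.\ to the unique norm-one functional with $\Lambda(x_0^*)=1$. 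Hence $\Lambda_n\to\Lambda$ in norm, and since $\|w_n^*\|\le 1$ we get $|\Lambda(w_n^*)-\Lambda_n(w_n^*)|\le\|\Lambda-\Lambda_n\|\to 0$, so that $\re\Lambda(w_n^*)\to 1$.

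Finally I would invoke strong exposure. Differentiability forces the supporting functional of $x_0^*$ to be unique, so the functional that strongly exposes $x_0^*$ must coincide with the derivative $\Lambda$ (viewed in $X^{**}$). Thus $\Lambda$ strongly exposes $x_0^*$ in $B_{X^*}$, and from $w_n^*\in B_{X^*}$ together with $\re\Lambda(w_n^*)\to 1=\re\Lambda(x_0^*)$ we obtain $\|w_n^*-x_0^*\|\to 0$, which finishes the proof.

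I expect the main obstacle to be conceptual rather than computational: the spear-set condition naturally produces nearly-norming functionals in the \emph{second} dual $X^{**}$, whereas strong exposure is a statement about how elements of $B_{X^*}$ cluster. The role of Fr\'echet differentiability is exactly to connect these two levels --- it upgrades the weak control on the $\Lambda_n$ to norm convergence and, via uniqueness of the supporting functional, identifies their common limit with the exposing functional, which is what ultimately makes the strong-exposure hypothesis applicable. One should double-check the harmless real/complex bookkeeping (passing from $\re\Lambda_n\to 1$ to convergence of the complex values and absorbing the unimodular scalars $\theta_n$), but no genuine difficulty lies there.
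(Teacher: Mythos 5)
Your proof is correct and follows essentially the same route as the paper's: the spear-set property of $G^*(B_{Y^*})$ produces a sequence $w_n^*$ with $\|w_n^*+x_0^*\|\to 2$, \v{S}mulyan's criterion for Fr\'echet differentiability controls the (almost-)norming functionals, and strong exposure then pins $w_n^*$ down to $x_0^*$. The only difference is organizational: you argue directly that $w_n^*\to x_0^*$, whereas the paper assumes $\dist(x_0^*,\overline{G^*(B_{Y^*})})>0$ and derives a contradiction via a small-diameter slice; both arguments rest on exactly the same three ingredients.
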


\begin{proof}
Suppose that $x_0^*\notin \overline{G^*(B_{Y^*})}$ and let $\alpha=\dist(x_0^*,\overline{G^*(B_{Y^*})})>0$. Since $x_0^*$ is strongly exposed, there are $x\in S_{X}$ and $\delta>0$ satisfying
$ \re x_0^*(x)=1$ and $\diam(\Slice(B_{X^*},x,\delta))<\alpha$.
Therefore, we get $\re x^*(x)\leq 1-\delta$ for every $x^*\in \overline{G^*(B_{Y^*})}$ and, as $\overline{G^*(B_{Y^*})}$ is a balanced set, we get in fact that
\begin{equation}\label{eq:x-separa}
|x^*(x)|\leq 1-\delta \qquad \forall\ x^*\in \overline{G^*(B_{Y^*})}.
\end{equation}
By Corollary~\ref{corollary:charact-generating-dual-spear}, $\overline{G^*(B_{Y^*})}$ is a spear set, so we can find a sequence $\{x_n^*\}$ in $\overline{G^*(B_{Y^*})}$ and a sequence $\{\theta_n\}$ in $\T$  such that $\|\theta_nx_n^*+ x_0^*\|\to 2$. Therefore, there is a sequence $\{x_n\}$ in $S_{X}$ satisfying
$$
\re x_0^*(x_n)\to 1 \qquad \text{and} \qquad |x_n^*(x_n)|\to 1.
$$
Since the norm of $X^*$ is Fr\'{e}chet differentiable at $x_0^*\in S_{X^*}$, by the $\check{S}$mulyian's test, we have that $\|x_n-x\|\to 0$. Thus, we get $|x_n^*(x)|\to 1$ which contradicts \eqref{eq:x-separa}.
\end{proof}

We are now able to provide the pending proof. For a Banach space $X$ let $\dens(X)$ denote its density character.

\begin{proof}[Proof of Proposition~\ref{proposition:foreveryYthereisXgenempty}]
Take a set $\Lambda$ with cardinality greater than $\dens(Y^*)$ and set $X=\ell_2(\Lambda)$.
If $G\in\Gen(X,Y)$, it follows from Lemma~\ref{lemma:Gen-and-Frechet-diff} that $\overline{G^*(Y^*)}=X^*=\ell_2(\Lambda)$ since every point in $S_{X^*}$ is Fr\'{e}chet differentiable and strongly exposed. Then, $\dens (X^*)=\dens(\overline{G^*(Y^*)})\leq \dens(Y^*)$, which is a contradiction.
\end{proof}

The above argument is based on the possibility of considering Banach spaces in the domain with a very big density character. It is then natural to raise the following question.

\begin{question}
Does there exist a Banach space $Y$ with $\dens(Y)=\Gamma$ such that $\Gen(X,Y)\neq \emptyset$ for every Banach space $X$ satisfying $\dens(X)\leq \Gamma$?
\end{question}

This question is easily solvable for separable spaces. Indeed, the space $Y=C[0,1]$ contains isometrically every separable Banach space. Since isometric embeddings are generating, we get the following example.

\begin{example}\label{example:Y=C[01]everyXseparableGennotempty}
{\slshape The separable Banach space $Y=C[0,1]$ satisfies $\Gen(X,Y)\neq \emptyset$ for every separable Banach space $X$.}
\end{example}

The question of whether the same trick works for all density characters is involved and depends on the Axiomatic Set Theory. On the one hand, assuming CH, $\ell_\infty/c_0$ is isometrically universal for all Banach spaces of density character the continuum \cite{Parovicenko} but, on the other hand, it is consistent that no such a universal space exists \cite{ShelahUsvyatsov}, even a isomorphically universal space, see \cite{Brech-Koszmider-2012}.

\subsection{\texorpdfstring{$\boldsymbol{\Gen(X,Y)}$}{Gen(X,Y)} when \texorpdfstring{$\boldsymbol{X}$}{X} is fixed}

We start our discussion recalling that, by Corollary~\ref{cor:charact-generatin-rank1}, a rank-one operator $x^*\otimes y\in G(X,Y)$ is generating if and only if $x^*\in \Spear(X^*)$. This, together with the fact that $\Gen(X,\K)=\Spear(X^*)$, gives the following result.

\begin{corollary}\label{corollary:GenXYnotemptyforallYiffspearXstarnotempty}
Let $X$ be a Banach space. Then,
$$
\Gen(X,Y)\neq \emptyset \  \textnormal{ for every Banach space } Y \quad \Longleftrightarrow \quad \Spear(X^*)\neq \emptyset.
$$
\end{corollary}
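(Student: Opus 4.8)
The plan is to derive the corollary directly from the two ingredients that the paper has already assembled: the identification $\Gen(X,\K)=\Spear(X^*)$, which is an immediate consequence of Corollary~\ref{cor:charact-generatin-rank1}, and the fact that a rank-one operator $x^*\otimes y$ is generating precisely when $x^*\in\Spear(X^*)$. Both directions of the equivalence are short once these facts are in hand, so the work is really just organizing the two implications.

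For the implication ``$\Leftarrow$'', I would argue as follows. Suppose $\Spear(X^*)\neq\emptyset$ and pick $x_0^*\in\Spear(X^*)$. Let $Y$ be an arbitrary Banach space; it is nonzero, so choose any $y_0\in S_Y$. Then the rank-one operator $G=x_0^*\otimes y_0$ has norm $\|x_0^*\|\,\|y_0\|=1$, and by Corollary~\ref{cor:charact-generatin-rank1} it is generating because $x_0^*\in\Spear(X^*)$. Hence $G\in\Gen(X,Y)$ and in particular $\Gen(X,Y)\neq\emptyset$. Since $Y$ was arbitrary, this gives the desired conclusion.

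For the implication ``$\Rightarrow$'', I would specialize the hypothesis to the target space $Y=\K$. The assumption gives $\Gen(X,\K)\neq\emptyset$. Now invoke the identity $\Gen(X,\K)=\Spear(X^*)$ recorded just before the statement of the corollary (which follows from Corollary~\ref{cor:charact-generatin-rank1}, since every norm-one operator from $X$ to $\K$ is exactly a norm-one functional $x^*\in S_{X^*}$, i.e.\ a rank-one operator of the form $x^*\otimes 1$). Therefore $\Spear(X^*)=\Gen(X,\K)\neq\emptyset$, which is precisely what we wanted.

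There is no genuine obstacle here; the only point demanding a little care is the appeal to $\Gen(X,\K)=\Spear(X^*)$, which is the $Y=\K$ case of Corollary~\ref{cor:charact-generatin-rank1} and has already been stated in the text immediately preceding the corollary. I would make sure to state explicitly that every operator in $\mathcal{L}(X,\K)$ is rank-one (or zero), so that Corollary~\ref{cor:charact-generatin-rank1} applies verbatim, and that in the ``$\Leftarrow$'' direction one needs $Y\neq\{0\}$ merely to guarantee the existence of a norm-one vector $y_0$, which is automatic for any Banach space under consideration.
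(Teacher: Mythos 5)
Your proposal is correct and follows exactly the route the paper intends: the paper derives this corollary from Corollary~\ref{cor:charact-generatin-rank1} (rank-one operators $x^*\otimes y$ are generating iff $x^*\in\Spear(X^*)$) together with the identification $\Gen(X,\K)=\Spear(X^*)$, which are precisely the two ingredients you use for the two implications.
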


For instance, if $X$ has the alternative Daugavet property and $B_{X^*}$ has $w^*$-denting points, then $\Spear(X^*)\neq\emptyset$ by \cite[Proposition 5.1]{KMMP-Spear}.

Once we know about the existence of Banach spaces for which $\Gen(X,Y)\neq \emptyset$ for every Banach space $Y$, it is natural to ask about the possible size of the set $\Gen(X,Y)$. The maximal possibility is $\Gen(X,Y)=S_{\mathcal{L}(X,Y)}$, but this forces $X=\K$.

\begin{corollary}\label{corollary:XonedimensionaliffGenXYequalall}
Let $X$ be a Banach space. Then, there exists a Banach space $Y$ such that $\Gen(X,Y)=S_{\mathcal{L}(X,Y)}$ if and only if $X=\K$. In this case, $\Gen(X,Z)=S_{\mathcal{L}(X,Z)}$ for all Banach spaces $Z$.
\end{corollary}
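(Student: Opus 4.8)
The plan is to prove the two implications separately, with the forward direction (existence of $Y$ with $\Gen(X,Y)=S_{\mathcal{L}(X,Y)}$ implies $X=\K$) being the substantive one. First I would dispose of the trivial direction and the final sentence: if $X=\K$ then $\mathcal{L}(X,Z)$ consists of scalar multiples of evaluation-type maps, every norm-one such operator is an isometric embedding (indeed, for $X=\K$ a norm-one operator $z\otimes 1$ with $z\in S_Z$ sends the scalar $\lambda$ to $\lambda z$, so $\|T\lambda\|=|\lambda|$), and isometric embeddings are generating by Examples~\ref{examples:preliminaryones}.(2). Hence $\Gen(X,Z)=S_{\mathcal{L}(X,Z)}$ for \emph{every} $Z$, which simultaneously settles the easy implication and the ``In this case'' clause.

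For the hard direction, I would argue by contraposition: assuming $\dim X\geq 2$, I want to produce, for any given $Y$, a norm-one operator $T\in\mathcal{L}(X,Y)$ that is \emph{not} generating, thereby showing $\Gen(X,Y)\neq S_{\mathcal{L}(X,Y)}$. The natural strategy is to use the dual characterization from Corollary~\ref{corollary:charact-generating-dual-spear}: $T$ is generating if and only if $T^*(B_{Y^*})$ is a spear set of $X^*$. So it suffices to exhibit a norm-one $T$ whose adjoint image $T^*(B_{Y^*})$ fails to be a spear set. The cleanest route is to reduce to the rank-one case via Corollary~\ref{cor:charact-generatin-rank1}: a rank-one operator $x_0^*\otimes y_0$ (with $x_0^*\in S_{X^*}$, $y_0\in S_Y$) is generating if and only if $x_0^*\in\Spear(X^*)$. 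Thus it is enough to find some $x_0^*\in S_{X^*}$ which is \emph{not} a spear vector of $X^*$; then $x_0^*\otimes y_0$ is a norm-one non-generating operator and we are done.

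The main obstacle is therefore to show that when $\dim X\geq 2$, the sphere $S_{X^*}$ cannot consist entirely of spear vectors. I expect this to follow from the structure theory of spear vectors in \cite[Chapter 2]{KMMP-Spear}; concretely, I would invoke the fact that a spear vector $x_0^*$ must satisfy the extreme norm-equality $\max_{\theta\in\T}\|x_0^*+\theta x^*\|=1+\|x^*\|$ for all $x^*\in X^*$, which is a severe geometric restriction. In a space of dimension at least two one can always choose $x^*$ and $x_0^*$ in ``general position'' (e.g.\ near-orthogonal directions) so that this equality fails for every $x_0^*$ on some relatively open piece of $S_{X^*}$; in particular $\Spear(X^*)\neq S_{X^*}$. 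An efficient way to package this: since $\dim X^*=\dim X\geq 2$, pick any two linearly independent $x_0^*, x^*\in S_{X^*}$ and observe that if \emph{every} unit vector were a spear vector, then in particular the spear-set condition would force a degenerate one-dimensional geometry on $X^*$, contradicting $\dim X^*\geq 2$. I would cite the relevant proposition from \cite{KMMP-Spear} characterizing when all of $S_Z$ consists of spear vectors (this happens precisely for one-dimensional $Z$) rather than re-derive the geometry from scratch, keeping the argument short and leveraging the already-established rank-one characterization.
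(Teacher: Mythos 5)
Your proposal is correct and follows essentially the same route as the paper: the forward direction is the trivial observation that norm-one operators from $\K$ are isometric embeddings, and the converse reduces to rank-one operators via Corollary~\ref{cor:charact-generatin-rank1}, so that $\Gen(X,Y)=S_{\mathcal{L}(X,Y)}$ forces $\Spear(X^*)=S_{X^*}$, which by the cited result of \cite{KMMP-Spear} (Proposition~2.11.(e) there) happens only when $X=\K$. The paper's proof is exactly this, so the heuristic ``general position'' digression in your middle paragraph is unnecessary once you invoke that citation.
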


\begin{proof}
If $X=\K$ then $\Gen(X,Y)=S_{\mathcal{L}(X,Y)}$ obviously holds for every Banach space $Y$. Conversely, suppose that there is a Banach space $Y$ such that $\Gen(X,Y)=S_{\mathcal{L}(X,Y)}$. So, in particular, every rank-one operator in $S_{\mathcal{L}(X,Y)}$ is generating but this means that $\Spear(X^*)=S_{X^*}$ by Corollary~\ref{cor:charact-generatin-rank1}. Therefore, $X=\K$ by \cite[Proposition~2.11.(e)]{KMMP-Spear}
\end{proof}

It is now natural to wonder if there can be enough generating operators to recover the unit ball of $\mathcal{L}(X,Y)$ by convex (or closed convex) hull. That is, we are looking for Banach spaces $X$ such that $B_{\mathcal{L}(X,Y)}=\conv(\Gen(X,Y))$ or $B_{\mathcal{L}(X,Y)}=\overline{\conv}(\Gen(X,Y))$ for every Banach space $Y$.

We start our discussion with an observation on lush spaces. Recall that a Banach spaces $X$ is \emph{lush} \cite{BKMW-lush} if for every $x,y\in S_X$ and every $\eps>0$, there exists $y^*\in S_{Y^*}$ such that $y\in \Slice(B_X,y^*,\eps)$ and $\dist(x,\aconv(\Slice(B_X,y^*,\eps)))<\eps$. Observe that $B_{X^*}=\overline{\conv}^{w^*}(\Gen(X,\K))=\overline{\conv}^{w^*}(\Spear(X^*))$ implies that $X$ is lush by \cite[Proposition 3.32]{KMMP-Spear}. Conversely, if $X$ is lush and separable, then $B_{X^*}=\overline{\conv}^{w^*}(\Gen(X,\K))$ by \cite[Theorem~3.33]{KMMP-Spear}. If one replaces the weak-star closed convex hull by the norm closed convex hull, one gets some interesting results on almost CL-spaces. A Banach space $X$ is said to be an \emph{almost CL-space} \cite{Lima} if $B_X$ is the absolutely closed convex hull of every maximal convex subset of $S_X$. By Hahn-Banach and Krein-Milman theorems, every maximal convex subset of $S_X$ has the form $\Face(B_X,x^*):=\{x\in S_X\colon x^*(x)=1\}$ for suitable $x^*\in\ext(B_{X^*})$. In this case, we say that $x^*$ is a \emph{maximal extreme point}, and write $x^*\in \extm(B_{X^*})$.

\begin{proposition}\label{prop:Bola=conv-gen-X-K-implies-X*-almostCL}
Let $X$ be a Banach space satisfying that $B_{X^*}=\overline{\conv}(\Gen(X,\K))$. Then, $X^*$ is an almost CL-space.
\end{proposition}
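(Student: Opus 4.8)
The plan is to reduce everything to the identity $\Gen(X,\K)=\Spear(X^*)$ together with a single structural fact about spear vectors, namely that a spear vector of any Banach space $Z$ has modulus one against every extreme point of $B_{Z^*}$. Recall that $X^*$ being an almost CL-space means $B_{X^*}=\overline{\aconv}(\Face(B_{X^*},\Phi))$ for every maximal extreme point $\Phi\in\extm(B_{X^{**}})$, where $\Face(B_{X^*},\Phi)=\{x^*\in S_{X^*}\colon \Phi(x^*)=1\}$. Since the inclusion $\overline{\aconv}(\Face(B_{X^*},\Phi))\subseteq B_{X^*}$ is automatic, the whole content is the reverse inclusion, and because $B_{X^*}=\overline{\conv}(\Spear(X^*))$ by hypothesis, it suffices to prove
$$\Spear(X^*)\subseteq \overline{\aconv}(\Face(B_{X^*},\Phi)) \qquad (\Phi\in\extm(B_{X^{**}})).$$
Indeed, taking closed convex hulls then gives $B_{X^*}=\overline{\conv}(\Spear(X^*))\subseteq\overline{\conv}(\aconv(\Face(B_{X^*},\Phi)))=\overline{\aconv}(\Face(B_{X^*},\Phi))$.

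The key step, and the main obstacle, is the following lemma: if $z\in\Spear(Z)$ then $|z^*(z)|=1$ for every $z^*\in\ext(B_{Z^*})$. I would prove it by first showing that $B_{Z^*}=\overline{\aconv}^{w^*}(F_z)$, where $F_z:=\{z^*\in B_{Z^*}\colon \re z^*(z)=1\}$ is the weak-star closed face of functionals attaining their norm at $z$. If this failed, weak-star Hahn–Banach separation (the predual of $(Z^*,w^*)$ being $Z$) would produce $w\in S_Z$ with $\sup_{u^*\in F_z}|u^*(w)|<1$. On the other hand, rewriting the spear equality $\max_{\theta\in\T}\|z+\theta w\|=1+\|w\|$ as $\sup_{u^*\in B_{Z^*}}\bigl(\re u^*(z)+|u^*(w)|\bigr)=2$ and passing to a weak-star cluster point of near-optimal functionals yields some $u_0^*\in F_z$ with $|u_0^*(w)|=1$, a contradiction. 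Once $B_{Z^*}=\overline{\aconv}^{w^*}(F_z)$ is known, Milman's theorem gives $\ext(B_{Z^*})\subseteq\overline{\T F_z}^{w^*}$; since $z^*\mapsto z^*(z)$ is weak-star continuous and $|u^*(z)|=1$ on $\T F_z$, we conclude $|z^*(z)|=1$ on $\ext(B_{Z^*})$. This fact is essentially contained in the discussion of spear vectors in \cite[Chapter~2]{KMMP-Spear}, so one could alternatively cite it.

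With the lemma in hand the proof closes quickly. Applying it with $Z=X^*$, every $z^*\in\Spear(X^*)$ satisfies $|\Phi(z^*)|=1$ for all $\Phi\in\ext(B_{X^{**}})$, in particular for every maximal extreme point $\Phi\in\extm(B_{X^{**}})$. Choosing $\theta\in\T$ with $\Phi(\theta z^*)=|\Phi(z^*)|=1$ shows $\theta z^*\in\Face(B_{X^*},\Phi)$, hence $z^*\in\T\,\Face(B_{X^*},\Phi)\subseteq\aconv(\Face(B_{X^*},\Phi))$. This is exactly the required inclusion $\Spear(X^*)\subseteq\aconv(\Face(B_{X^*},\Phi))$, and the reduction of the first paragraph finishes the argument. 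The only delicate point is the weak-star cluster-point extraction in the lemma; everything else is formal manipulation of convex hulls and phases.
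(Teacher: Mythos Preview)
Your proof is correct and follows essentially the same route as the paper: both arguments reduce to the inclusion $\Spear(X^*)\subseteq\T\,\Face(B_{X^*},\Phi)$ for every $\Phi\in\extm(B_{X^{**}})$, which is equivalent to the fact that $|\Phi(z^*)|=1$ for every spear vector $z^*\in\Spear(X^*)$ and every extreme point $\Phi\in\ext(B_{X^{**}})$. The only difference is that the paper invokes this fact directly as \cite[Corollary~2.8.iv]{KMMP-Spear}, whereas you supply a self-contained proof of it via $B_{Z^*}=\overline{\aconv}^{w^*}(F_z)$ and Milman's theorem --- a proof you yourself note could be replaced by the same citation.
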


\begin{proof}
Indeed, let $F=\Face(S_{X^*},x^{**})$ for some $x^{**}\in \extm(B_{X^{**}})$ be a maximal convex subset of $S_{X^*}$. Then, $B_{X^*}=\overline{\conv}(\T F)$ since
$\Spear(X^*)\equiv \Gen(X,\K) \subseteq \T \Face(S_{X^*},x^{**})$ for all $x^{**}\in \ext(B_{X^{**}})$ by \cite[Corollary 2.8.iv]{KMMP-Spear}.
\end{proof}

A partial converse of the above result is also true:

\begin{proposition}
Let $X$ be an almost CL-space. Then, $B_{X^*}=\overline{\conv}^{w^*}(\Gen(X,\K))$. If, moreover, $X$ does not contain $\ell_1$, then
$B_{X^*}=\overline{\conv}(\Gen(X,\K))$.
\end{proposition}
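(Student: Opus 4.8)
The plan is to prove the statement that an almost CL-space $X$ satisfies $B_{X^*}=\overline{\conv}^{w^*}(\Gen(X,\K))=\overline{\conv}^{w^*}(\Spear(X^*))$, with the norm-closure version holding under the extra assumption that $X$ does not contain $\ell_1$. Since $\Gen(X,\K)=\Spear(X^*)$ by the remarks following Corollary~\ref{corollary:XonedimensionaliffGenXYequalall}, the task is to show that spear vectors of $X^*$ generate the dual ball. The natural strategy is to exploit the defining property of almost CL-spaces, namely that $B_X=\overline{\aconv}(\Face(B_X,x^*))$ for each maximal extreme point $x^*\in\extm(B_{X^*})$, and to combine this with a characterization of spear functionals in terms of maximal convex faces.

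First I would recall the key structural fact about almost CL-spaces: by definition, for every maximal convex subset $\Face(B_X,x^*)$ of $S_X$ (with $x^*\in\extm(B_{X^*})$) one has $B_X=\overline{\aconv}(\Face(B_X,x^*))$. The point is that this forces each such $x^*$ to be a spear vector of $X^*$. Indeed, to check that $x^*\in\Spear(X^*)$ one must verify $\max_{\theta\in\T}\|x^*+\theta z^*\|=1+\|z^*\|$ for all $z^*\in X^*$; writing the norm on $X^*$ as a supremum over $B_X$ and using that $B_X$ is the absolutely convex hull of $\Face(B_X,x^*)$, one can find elements of $S_X$ on which $x^*$ is essentially $1$ while $z^*$ is essentially $\|z^*\|$ in modulus after an appropriate rotation $\theta$. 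This is the classical reason almost CL-spaces have $\extm(B_{X^*})\subseteq\Spear(X^*)$, and it is the heart of the weak-star statement.

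Given that $\extm(B_{X^*})\subseteq\Spear(X^*)=\Gen(X,\K)$, the weak-star conclusion follows from Krein--Milman: the maximal extreme points are $w^*$-dense enough in $\ext(B_{X^*})$ that $B_{X^*}=\overline{\conv}^{w^*}(\T\,\ext(B_{X^*}))=\overline{\conv}^{w^*}(\extm(B_{X^*}))\subseteq\overline{\conv}^{w^*}(\Gen(X,\K))$, and the reverse inclusion is trivial since $\Gen(X,\K)\subseteq B_{X^*}$. For the norm-closed version under the hypothesis that $X$ does not contain $\ell_1$, the plan is to upgrade the $w^*$-closure to a norm-closure. Here the natural tool is a theorem of the Odell--Rosenthal or Bourgain--Rosenthal type: when $X$ does not contain $\ell_1$, weak-star convergent sequences (or the relevant convex structure) can be replaced by norm-closed convex hulls, so that $\overline{\conv}^{w^*}$ of a suitable set coincides with $\overline{\conv}$. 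Concretely, I expect one invokes the fact that the absence of $\ell_1$ guarantees that the spear vectors, which are already $w^*$-dense in the extreme points, actually norm-generate $B_{X^*}$.

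The main obstacle I anticipate is the passage from the weak-star closed convex hull to the norm-closed convex hull; the weak-star statement is essentially a clean Krein--Milman argument once the inclusion $\extm(B_{X^*})\subseteq\Spear(X^*)$ is established, but the norm version genuinely requires the Banach-space-theoretic input ``$X$ does not contain $\ell_1$'' and must avoid the naive mistake of closing in the wrong topology. I would handle this by isolating a lemma (or citing one from the literature on almost CL-spaces, e.g.\ \cite{Lima} or the relevant results in \cite{KMMP-Spear}) stating that for almost CL-spaces not containing $\ell_1$, $B_{X^*}=\overline{\conv}(\extm(B_{X^*}))$ in norm; combined with $\extm(B_{X^*})\subseteq\Gen(X,\K)$ this immediately yields the desired equality. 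The secondary subtlety to watch is keeping the scalar rotations by $\T$ consistent throughout, since spear sets are defined with the maximum over $\theta\in\T$ and the faces live in $S_{X^*}$ rather than all of $S_{X^{**}}$.
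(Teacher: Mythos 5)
Your treatment of the weak-star statement is essentially correct, and for the inclusion $\extm(B_{X^*})\subseteq\Spear(X^*)$ it is in fact more self-contained than the paper's: you verify directly from the definition of almost CL-space that each maximal extreme point $x^*$ is a spear vector (approximate a near-norming point for $z^*$ by an absolutely convex combination of elements of $\Face(B_X,x^*)$, extract a single face element $f$ with $|z^*(f)|>\|z^*\|-\eps$, and use $x^*(f)=1$ together with a suitable rotation $\theta$ to get $\|x^*+\theta z^*\|>1+\|z^*\|-\eps$). The paper instead cites \cite[Lemma 3]{MP-CL-spaces} and \cite[Corollary 2.8.iv]{KMMP-Spear}; both routes work. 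One correction, though: the equality $B_{X^*}=\overline{\conv}^{w^*}(\extm(B_{X^*}))$ is not obtained from Krein--Milman or from any $w^*$-density of $\extm(B_{X^*})$ inside $\ext(B_{X^*})$ (which need not hold). It holds because $\extm(B_{X^*})$ is one-norming for $X$: by Zorn's lemma every $x\in S_X$ lies in some maximal convex subset of $S_X$, hence is normed by some maximal extreme point, and then the bipolar theorem gives the claim. This is exactly the justification the paper uses.

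The genuine gap is in the ``moreover'' part. You correctly identify that the hypothesis that $X$ contains no copy of $\ell_1$ must be used to pass from the $w^*$-closed to the norm-closed convex hull, but you do not supply an argument: you propose to ``isolate or cite a lemma'' whose content is precisely the statement to be proved, and the results you point to (Odell--Rosenthal, Bourgain--Rosenthal) are not the relevant tools. The correct ingredient is Godefroy's boundary theorem \cite[Theorem~III.1]{Godefroy-boundary}: if $X$ contains no isomorphic copy of $\ell_1$ and $B\subseteq B_{X^*}$ is a James boundary for $X$ (for every $x\in X$ some $b\in B$ satisfies $b(x)=\|x\|$), then $B_{X^*}=\overline{\conv}(B)$ in the norm topology. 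The missing observation is that $\extm(B_{X^*})$ is such a boundary --- again because every point of $S_X$ belongs to some maximal convex subset $\Face(B_X,x^*)$ with $x^*\in\extm(B_{X^*})$ --- after which the norm-closure statement follows immediately from the inclusion $\extm(B_{X^*})\subseteq\Spear(X^*)=\Gen(X,\K)$ established in the first part.
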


\begin{proof}
Being $\extm(B_{X^*})$ norming for $X$, we always have that
$$B_{X^*}=\overline{\conv}^{w^*}(\extm(B_{X^*})).$$ But when $X$ is an almost CL-space,
we have that $|x^{**}(x^*)|=1$ for every $x^{**}\in\ext(B_{X^{**}})$ and every $x^*\in\extm(B_{X^*})$ by using \cite[Lemma 3]{MP-CL-spaces}. Then, $\extm(B_{X^*})\subseteq \Spear(X^*)\equiv \Gen(X,\K)$ by \cite[Corollary 2.8.iv]{KMMP-Spear}, and we are done.

For the moreover part, it is enough to see that $\extm(B_{X^*})$ is actually a James boundary for $X$ and so $B_{X^*}=\overline{\conv}(\extm(B_{X^*}))$ by \cite[Theorem~III.1]{Godefroy-boundary}.
\end{proof}

Our next aim is to show that the set $\Gen(L_1(\mu), Y)$ is quite big for every finite measure $\mu$ and many Banach spaces $Y$, and that in some cases it allows to recover the unit ball of $\mathcal{L}(L_1(\mu),Y)$ by taking closed convex hull. Given a finite measure space $(\Omega,\Sigma,\mu)$ and a Banach space $Y$ we write
$$
\mathcal{R}(L_1(\mu),Y)=\{T\in \mathcal{L}(L_1(\mu), Y) \colon \|T\|\leq 1,\ T \textnormal{ is representable}\}.
$$

\begin{theorem}\label{theorem:representable-in-Gen}
Let $(\Omega,\Sigma,\mu)$ be a finite measure space and let $Y$ be a Banach space. Then,
$$
\mathcal{R}(L_1(\mu),Y)\subseteq \overline{\conv}\big(\Gen(L_1(\mu),Y)\big).
$$
As a consequence, if $Y$ has the RNP, then
$$
B_{\mathcal{L}(L_1(\mu), Y)}=\overline{\conv}\big(\Gen(L_1(\mu),Y)\big).
$$
\end{theorem}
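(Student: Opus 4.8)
The plan is to transfer the problem, through the isometric identification of representable operators with $L_\infty(\mu,Y)$, to a decomposition statement about $B_{L_\infty(\mu,Y)}$ that is already recorded in the paper, and then to deduce the ``consequence'' from the automatic representability of operators into an RNP space.

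First I would set up the correspondence discussed just before Corollary~\ref{cor:char-generating-from-L1}. The assignment $g\longmapsto T_g$, where $T_g(f)=\int_\Omega f(t)g(t)\,d\mu(t)$ for $f\in L_1(\mu)$, is linear, injective, and satisfies $\|T_g\|=\|g\|_\infty$; hence it is a linear isometry from $L_\infty(\mu,Y)$ onto the subspace of representable operators, under which $\mathcal{R}(L_1(\mu),Y)$ is exactly the image of $B_{L_\infty(\mu,Y)}$. By Corollary~\ref{cor:char-generating-from-L1}, if $h\in L_\infty(\mu,Y)$ satisfies $\|h(t)\|=1$ for $\mu$-almost every $t$, then $\|T_h\|=1$ and $T_h$ is generating, i.e.\ $T_h\in\Gen(L_1(\mu),Y)$.

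Next I would invoke the vector-valued description of the unit ball of $L_\infty(\mu,Y)$ given in Lemma~\ref{lemma:L_infty(mu,Y)}, namely
$$
B_{L_\infty(\mu,Y)}=\overline{\conv}\bigl(\{h\in L_\infty(\mu,Y)\colon \|h(t)\|=1 \ \text{a.e.}\}\bigr),
$$
the closure being in the $L_\infty$-norm. Given $T\in\mathcal{R}(L_1(\mu),Y)$, I write $T=T_g$ with $g\in B_{L_\infty(\mu,Y)}$ and choose finite convex combinations $\sum_i\lambda_i^{(n)}h_i^{(n)}\to g$ in $L_\infty$-norm with each $h_i^{(n)}$ of constant norm one. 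Since $g\mapsto T_g$ is a linear isometry, $\sum_i\lambda_i^{(n)}T_{h_i^{(n)}}=T_{\sum_i\lambda_i^{(n)}h_i^{(n)}}\to T$ in operator norm, and each partial sum lies in $\conv(\Gen(L_1(\mu),Y))$ by the previous paragraph. Hence $T\in\overline{\conv}(\Gen(L_1(\mu),Y))$, which is the first inclusion. For the consequence, I would use that when $Y$ has the RNP every operator in $\mathcal{L}(L_1(\mu),Y)$ is representable (as recalled before Corollary~\ref{cor:char-generating-from-L1}), so that $B_{\mathcal{L}(L_1(\mu),Y)}=\mathcal{R}(L_1(\mu),Y)$; the first inclusion then yields $B_{\mathcal{L}(L_1(\mu),Y)}\subseteq\overline{\conv}(\Gen(L_1(\mu),Y))$, while the reverse inclusion is immediate since generating operators have norm one and $B_{\mathcal{L}(L_1(\mu),Y)}$ is closed and convex.

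The argument is routine once Lemma~\ref{lemma:L_infty(mu,Y)} is in hand, so the real content, and the main obstacle, is that lemma. Were it not available, I would prove it by first approximating $g$ in $L_\infty$-norm by a simple function taking finitely many values $y_1,\dots,y_m\in B_Y$ on measurable sets $A_1,\dots,A_m$, then writing each $y_j$ as a convex combination of unit vectors of $Y$ (possible since $B_Y=\conv(S_Y)$) with nonnegative rational coefficients of a single common denominator $N$, and finally reassembling these choices into $N$ measurable functions of constant norm one whose average with the equal weights $1/N$ matches the simple function up to an error controlled by the rational approximation. The delicate point is precisely that these weights must be chosen independent of the value $y_j$ (and the decomposition carried out measurably in $t$), which is exactly what forces the passage to a common denominator.
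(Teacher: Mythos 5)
Your argument is correct and is essentially the paper's own proof: the authors likewise reduce the theorem to Corollary~\ref{cor:char-generating-from-L1} together with Lemma~\ref{lemma:L_infty(mu,Y)}, remarking that the theorem ``follows immediately'' from these two ingredients, exactly as you transfer the decomposition of $B_{L_\infty(\mu,Y)}$ through the isometry $g\mapsto T_g$ and handle the RNP case via automatic representability. (Your sketched backup proof of the lemma via simple functions and common-denominator convex combinations differs from the paper's inductive argument on the level sets of $\|f(t)\|$, but that is immaterial here since the lemma is proved in the paper.)
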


Observe that the restriction on the measure $\mu$ to be finite can be relaxed to be $\sigma$-finite as in Remark~\ref{remark:finite-sigma-finite}.

The proof of the theorem follows immediately using Corollary~\ref{cor:char-generating-from-L1} and the next lemma, which we do not know whether it is already known.

\begin{lemma}\label{lemma:L_infty(mu,Y)}
Let $(\Omega,\Sigma,\mu)$ be a positive measure space and let $Y$ be a Banach space. Then,
$$
B_{L_\infty(\mu,Y)}=\overline{\conv}\big(\{g\in L_\infty(\mu,Y) \colon \|g(t)\|=1\ \mu\textnormal{-almost everywhere}\}\big).
$$
\end{lemma}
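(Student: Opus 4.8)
The plan is to prove the two inclusions separately, the nontrivial one being that every $f\in B_{L_\infty(\mu,Y)}$ lies in the closed convex hull of the \emph{unimodular} functions $U:=\{g\in L_\infty(\mu,Y)\colon \|g(t)\|=1\ \mu\text{-a.e.}\}$. The inclusion $\overline{\conv}(U)\subseteq B_{L_\infty(\mu,Y)}$ is immediate, since each $g\in U$ has $\|g\|_\infty=1$ and the unit ball is closed and convex. For the reverse inclusion the key device will be a \emph{unit-vector binary expansion}: I claim that every $y\in B_Y$ can be written as a norm-convergent series $y=\sum_{i\ge1}2^{-i}v_i$ with $v_i\in S_Y$. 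This follows from a greedy construction based on the elementary observation that for every $z\in B_Y$ there is $v\in S_Y$ with $\|2z-v\|\le1$ (take $v=z/\|z\|$ when $z\neq0$, for which $\|2z-v\|=|2\|z\|-1|\le1$, and any $v\in S_Y$ when $z=0$). Writing $z_0=y$ and $z_i=2z_{i-1}-v_i$ keeps $\|z_i\|\le1$ and yields $y=\sum_{i=1}^N 2^{-i}v_i+2^{-N}z_N$, so the remainder is at most $2^{-N}$ and the series converges.

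Next I would reduce an arbitrary $f\in B_{L_\infty(\mu,Y)}$ to a countably-valued function. By strong measurability the essential range of $f$ is separable and contained in $B_Y$; covering it by countably many balls of radius $\eps$ centred at points of the range and disjointifying their preimages produces a countable measurable partition $\{A_n\}$ of $\Omega$ (up to a null set) and vectors $y_n\in B_Y$ with $h:=\sum_n y_n\mathbbm{1}_{A_n}$ satisfying $\|f-h\|_\infty\le\eps$. For this $h$ I would expand each value $y_n=\sum_{i\ge1}2^{-i}v_{n,i}$ by the first step and set $g_i:=\sum_n v_{n,i}\mathbbm{1}_{A_n}$, which is a countably-valued, hence strongly measurable, element of $L_\infty(\mu,Y)$ with $\|g_i(t)\|=1$ a.e., i.e.\ $g_i\in U$. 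The crucial point is that the weights $2^{-i}$ do not depend on $n$, so
\[
\Bigl\|h-\sum_{i=1}^N 2^{-i}g_i\Bigr\|_\infty=\sup_n\Bigl\|y_n-\sum_{i=1}^N 2^{-i}v_{n,i}\Bigr\|\le 2^{-N}\xrightarrow[N\to\infty]{}0 .
\]
Since $\sum_{i\ge1}2^{-i}=1$, this exhibits $h$ as a norm-convergent convex combination of elements of $U$ (equivalently, the rescaled partial sums $(1-2^{-N})^{-1}\sum_{i\le N}2^{-i}g_i$ are genuine convex combinations converging to $h$), so $h\in\overline{\conv}(U)$. As $\overline{\conv}(U)$ is closed and $\|f-h\|_\infty\le\eps$ with $\eps$ arbitrary, $f\in\overline{\conv}(U)$, finishing the proof.

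The step I expect to be the main obstacle is precisely the passage from the pointwise fact $B_Y=\overline{\conv}(S_Y)$ to a global statement in $L_\infty$. In finite dimensions one would approximate $f$ uniformly by a \emph{finitely}-valued simple function and combine finitely many unimodular ``sign patterns'', but in infinite dimensions the essential range of $f$ need not be totally bounded, so only countably-valued uniform approximation is available; a naive truncation of the countably many pieces fails to converge in $L_\infty$ because the error on the discarded tail stays of order one regardless of its measure. The unit-vector binary expansion is designed exactly to circumvent this: by using one fixed geometric weight sequence simultaneously on all pieces, it produces a single convex series whose tail is controlled uniformly in $n$, so no truncation of the partition is ever needed.
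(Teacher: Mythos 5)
Your proof is correct, and it takes a genuinely different route from the paper's. The paper never discretizes the values of $f$ in $Y$: it only discretizes the scalar function $t\mapsto\|f(t)\|$ into finitely many levels $\alpha_1<\dots<\alpha_N$ (Step two there), and then proves by induction on $N$ that such an $f$ is an \emph{exact} convex combination of $2^{N-1}$ unimodular functions, peeling off one level at a time via the symmetric splitting $y=\frac{1+\alpha}{2}\frac{y}{\|y\|}+\frac{1-\alpha}{2}\bigl(-\frac{y}{\|y\|}\bigr)$. Your argument replaces that induction by the unit-vector dyadic expansion $y=\sum_i 2^{-i}v_i$ with a weight sequence independent of $y$, which is exactly what lets you treat countably many pieces simultaneously; in exchange you must first invoke essential separable-valuedness of $f$ to get the countable partition $\{A_n\}$, an ingredient the paper avoids entirely (its approximants $h(t)=\alpha_k f(t)/\|f(t)\|$ need no separability of the range, only measurability of $\|f(\cdot)\|$). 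It is worth noting that your greedy digits are in fact $v_i=\pm y/\|y\|$ with signs determined by the binary expansion of $\frac{1+\|y\|}{2}$, so the two proofs rest on the same one-dimensional fact; the difference is purely in how the coefficients are made uniform over $\Omega$ --- finitely many norm-levels plus induction in the paper, versus one universal geometric weight sequence in your version. Your approach yields an infinite convex series rather than a finite convex combination (hence the rescaling of partial sums, which you correctly supply), while the paper's Step one gives genuinely finite combinations for its special class; both conclude by a density argument, so the final statements coincide.
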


\begin{proof}\parskip=0ex
Calling $\mathcal{B}=\{g\in L_\infty(\mu,Y) \colon \|g(t)\|=1\ \mu\textnormal{-almost everywhere}\}$,  it obviously suffices to show that
$S_{L_\infty(\mu,Y)}\subset\overline{\conv}(\mathcal{B})$. We divide the proof into two steps.

\emph{Step one.}\ {\slshape Let $f\in S_{L_\infty(\mu,Y)}$ and suppose that there are $N\in \N$, numbers $\alpha_1<\dots<\alpha_N \in [0,1]$, and pairwise disjoint subsets $B_k\subset \Omega$ with $\mu(B_k)\neq 0$ for $k=1,\ldots,N$ such that $\bigcup_{k=1}^NB_k=\Omega$ and $\|f(t)\|=\alpha_k$ for every $t\in B_k$ and every $k=1,\ldots,N$ (observe that $\alpha_N=1$ as $\|f\|=1$). Then, $f$ can be written as a convex combination of $2^{N-1}$ functions in $\mathcal{B}$.}\newline
\indent Indeed, we proceed by induction on $N$: for $N=1$, the function $f$ belongs to $\mathcal{B}$. The case $N=2$ gives the flavour of the proof. In this case we have that $\|f(t)\|=\alpha_1$ for every $t\in B_1$ and $\|f(t)\|=1$ for every $t\in B_2$. So, call $\lambda_1=\frac{1+\alpha_1}{2}$, $\lambda_2=\frac{1-\alpha_1}{2}\in[0,1]$ and define $g_1, g_2 \in L_\infty(\mu,Y)$ by $g_1(t)=g_2(t)=f(t)$ for every $t\in B_2$. Besides, if $\alpha_1\neq0$, define
$$
g_1(t)=\frac{f(t)}{\|f(t)\|}, \quad \textnormal{and} \quad g_2(t)=-\frac{f(t)}{\|f(t)\|} \quad \forall t\in B_1.
$$
If otherwise $\alpha_1=0$, fix $y_0\in S_Y$, and define $g_1(t)=y_0$ and $g_2(t)=-y_0$ for every $t\in B_1$. It is clear that in any case we have $f=\lambda_1g_1+\lambda_2g_2$ and that $g_1,g_2\in \mathcal{B}$.

Suppose now that the result is true for $N\geq 2$ and let us prove it for $N+1$. So, let $f\in S_{L_\infty(\mu,Y)}$ and suppose that there are numbers $\alpha_1<\dots<\alpha_{N+1} \in [0,1]$ with $\alpha_{N+1}=1$, and pairwise disjoint subsets $B_k\subset \Omega$ with $\mu(B_k)\neq 0$ for $k=1,\ldots,N+1$ such that $\bigcup_{k=1}^{N+1}B_k=\Omega$ and $\|f(t)\|=\alpha_k$ for every $t\in B_k$ and every $k=1,\ldots,N+1$. Observe that, as $N\geq 2$, we have that $\alpha_N> 0$. Then, we call $\lambda_1=\frac{1+\alpha_N}{2}$, $\lambda_2=\frac{1-\alpha_N}{2}\in[0,1]$ and we define $f_1,f_2\in L_\infty(\mu,Y)$ by
\begin{align*}
f_1(t)&=\frac{f(t)}{\|f(t)\|} \quad \textnormal{if } t\in B_{N} \quad \textnormal{and} \quad f_1(t)=f(t) \quad \textnormal{if } t\in \Omega\setminus B_{N},\\
f_2(t)&=-\frac{f(t)}{\|f(t)\|} \quad \textnormal{if } t\in B_{N} \quad \textnormal{and} \quad f_2(t)=f(t) \quad \textnormal{if } t\in \Omega\setminus B_{N}
\end{align*}
which clearly satisfy $f=\lambda_1 f_1+\lambda_2f_2$. Besides, it is also clear that  $\|f_1(t)\|=\|f_2(t)\|=1$ for every $t\in B_N\cup B_{N+1}$. So, we can apply the induction step for $f_1$ and $f_2$ to write
$$
f_1=\sum_{k=1}^{2^{N-1}} \mu_k g_k \qquad \textnormal{and} \qquad f_2=\sum_{k=1}^{2^{N-1}} \beta_k h_k
$$
where $g_k,h_k\in \mathcal{B}$,  $\mu_k,\beta_k\in [0,1]$ for $k=1,\ldots, 2^{N-1}$,  $\sum_{k=1}^{2^{N-1}} \mu_k=1$, and $\sum_{k=1}^{2^{N-1}} \beta_k=1$. Therefore, the convex combination we are looking for is
$$
f=\lambda_1\sum_{k=1}^{2^{N-1}} \mu_k g_k+ \lambda_2\sum_{k=1}^{2^{N-1}} \beta_k h_k
$$
which finishes the induction process.

\emph{Step two.} {\slshape Every function $f\in S_{L_\infty(\mu,Y)}$ can be approximated by functions of the class described in the first step.}\newline \indent
Indeed, fixed $\eps>0$, we may find a partition of $[0,1]=\bigcup_{k=1}^{N} A_k$ such that $0<\diam(A_k)<\eps$ for every $k=1,\ldots,N$, $0\in A_1$, and $1\in A_N$. Next, fix $\alpha_k\in A_k$ for each $k=1,\ldots,N$ with $\alpha_1=0$ and $\alpha_N=1$, and define $B_k=\{t\in\Omega\colon \|f(t)\|\in A_k\}$ for every $k=1,\ldots,N$. We assume without loss of generality that $B_1,\ldots,B_N$ are non-empty. Now, consider the function $h\in L_\infty(\mu,Y)$ given by
$$
h(t)=
\begin{cases}
	0  &\textnormal{if } t\in B_1
	\\
	\alpha_k \dfrac{f(t)}{\|f(t)\|} &\textnormal{if } t\in B_k \textnormal{ with } k\geq 2.
\end{cases}
$$
For $t\in B_1$, we have
$$
\|f(t)-h(t)\|=\|f(t)\|\leq\diam(A_1)<\eps.
$$
Besides, for $t\in B_k$ with $k\geq 2$, we have
$$
\|f(t)-h(t)\|=\left\|f(t)-\alpha_k \dfrac{f(t)}{\|f(t)\|}\right\|=\big|\|f(t)\|-\alpha_k\big|\leq \diam(A_k)<\eps.
$$
Therefore, $\|f-h\|\leq\eps$ and the proof is finished.
\end{proof}

Let us now discuss the case of purely atomic measures. When $\mu$ is purely atomic and $\sigma$-finite (so $L_1(\mu)$ can be easily viewed as $L_1(\nu)$ for a suitable purely atomic and finite measure $\nu$, see \cite[Proposition~1.6.1]{CembranosMendoza} for instance), every operator in $\mathcal{L}(L_1(\mu), Y)$ is representable for every Banach space $Y$ (see \cite[p.~62]{DiestelUhl}, for instance). So, Theorem~\ref{theorem:representable-in-Gen} gives that $B_{\mathcal{L}(\ell_1(\Gamma),Y)}=\overline{\conv}(\Gen(\ell_1(\Gamma),Y))$ for every Banach space $Y$ and every countable set $\Gamma$. Actually, the restriction of countability for the set $\Gamma$ can be remove and the proof in this case is much more direct.

\begin{proposition}\label{cor:conv-Gen-l1}
$B_{\mathcal{L}(\ell_1(\Gamma),Y)}=\overline{\conv}(\Gen(\ell_1(\Gamma),Y))$ for every Banach space $Y$ and every set $\Gamma$.
\end{proposition}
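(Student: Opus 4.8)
The plan is to exploit the isometric identification of $\mathcal{L}(\ell_1(\Gamma),Y)$ with the space of uniformly bounded $Y$-valued families: an operator $T$ corresponds to $(Te_\gamma)_{\gamma\in\Gamma}$, with $\|T\|=\sup_{\gamma}\|Te_\gamma\|$, and conversely any family $(y_\gamma)_\gamma$ with $\sup_\gamma\|y_\gamma\|<\infty$ defines a bounded operator by $x\mapsto \sum_\gamma x(\gamma)y_\gamma$ (the series converges absolutely since $x\in\ell_1(\Gamma)$). First I would pin down the generating operators in this picture. Since $\overline{\aconv}\{e_\gamma\colon\gamma\in\Gamma\}=B_{\ell_1(\Gamma)}$, and since $e_\gamma\in\att(G,\delta)$ for every $\delta>0$ as soon as $\|Ge_\gamma\|=1$, the ``in particular'' clause of Corollary~\ref{corollary:characterization-generating} shows that a norm-one operator $G$ is generating whenever $\|Ge_\gamma\|=1$ for every $\gamma$ (this is the $\ell_1(\Gamma)$-analogue of Example~\ref{exa:ell_1}). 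Hence $\Gen(\ell_1(\Gamma),Y)$ is exactly the set of operators sending every $e_\gamma$ into $S_Y$. The inclusion $\overline{\conv}\big(\Gen(\ell_1(\Gamma),Y)\big)\subseteq B_{\mathcal{L}(\ell_1(\Gamma),Y)}$ is immediate (the right-hand side is closed and convex), so the content is the reverse inclusion.

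The heart of the argument is a single coordinatewise approximation whose error is \emph{uniform}. The claim I would isolate is: for every $z\in B_Y$ and every $n\in\N$ there exist $y_1,\dots,y_n\in S_Y$ with
$$
\Big\|z-\tfrac1n\textstyle\sum_{i=1}^n y_i\Big\|\leq\tfrac1n.
$$
To prove it, I would fix a direction $u\in S_Y$ and note that, for $z$ in the interior of $B_Y$, the convex function $t\mapsto\|z+tu\|$ meets the value $1$ at two points, giving $p,q\in S_Y$ with $z=\alpha p+(1-\alpha)q$ for some $\alpha\in[0,1]$ (the case $z\in S_Y$ being trivial). Then one chooses $k\in\{0,\dots,n\}$ with $|\alpha-k/n|\leq\frac1{2n}$, takes $k$ copies of $p$ and $n-k$ copies of $q$, and estimates the error by $|\alpha-k/n|\,\|p-q\|\leq\frac1{2n}\cdot 2=\frac1n$. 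The crucial feature is that this bound does not depend on $z$.

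With this in hand I would conclude as follows. Given $T$ with $\|T\|\leq1$, so $Te_\gamma\in B_Y$ for all $\gamma$, fix $n\in\N$ and apply the lemma in each coordinate to obtain unit vectors $y^{(1)}_\gamma,\dots,y^{(n)}_\gamma\in S_Y$ with $\big\|Te_\gamma-\frac1n\sum_i y^{(i)}_\gamma\big\|\leq\frac1n$. Define $G^{(i)}_n\in\mathcal{L}(\ell_1(\Gamma),Y)$ by $G^{(i)}_n e_\gamma=y^{(i)}_\gamma$; each $G^{(i)}_n$ has norm one and sends every $e_\gamma$ into $S_Y$, hence is generating by the first paragraph. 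Then $T_n:=\frac1n\sum_{i=1}^n G^{(i)}_n\in\conv\big(\Gen(\ell_1(\Gamma),Y)\big)$ and
$$
\|T_n-T\|=\sup_{\gamma}\Big\|\tfrac1n\textstyle\sum_{i=1}^n y^{(i)}_\gamma-Te_\gamma\Big\|\leq\tfrac1n,
$$
so letting $n\to\infty$ gives $T\in\overline{\conv}\big(\Gen(\ell_1(\Gamma),Y)\big)$, as desired.

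The step I expect to be the genuine obstacle is the demand that a \emph{single} set of convex weights work in all coordinates at once: a convex combination $\sum_i\lambda_i G^{(i)}$ acts on $e_\gamma$ as $\sum_i\lambda_i G^{(i)}e_\gamma$, so the same $\lambda_i$ must simultaneously express each $Te_\gamma$ as a convex combination of unit vectors. A naive exact two-point decomposition per coordinate fails, because the weight $\alpha$ varies with $\gamma$. This is precisely why I force equal weights $1/n$ and rely on an approximation whose error is bounded by a constant over $n$ independently of the target point, so that the sup over $\gamma$ stays under control. The uniformity of the error estimate is the only delicate point; everything else is bookkeeping.
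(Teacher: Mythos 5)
Your proposal is correct. It coincides with the paper's proof in its first half: the paper also identifies $\mathcal{L}(\ell_1(\Gamma),Y)$ isometrically with $\bigl[\bigoplus_{\gamma\in\Gamma}Y\bigr]_{\ell_\infty}$ via $T\mapsto (Te_\gamma)_{\gamma\in\Gamma}$ and uses Example~\ref{exa:ell_1} to see that the generating operators are exactly the families with every coordinate of norm one. Where you genuinely diverge is in the density step. The paper invokes Lemma~\ref{lemma:L_infty(mu,Y)} (stated for general $L_\infty(\mu,Y)$, since it is also needed for Theorem~\ref{theorem:representable-in-Gen}): there one first approximates $f$ by a function whose norm takes only finitely many values $\alpha_1<\dots<\alpha_N$ and then, by induction on $N$, writes that approximant as an \emph{exact} convex combination of $2^{N-1}$ unit-sphere-valued functions, the building block being the decomposition $z=\frac{1+\alpha}{2}\,\frac{z}{\|z\|}+\frac{1-\alpha}{2}\bigl(-\frac{z}{\|z\|}\bigr)$ along the line through the origin. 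You instead decompose each $z\in B_Y$ along an arbitrary line $z+\R u$ hitting the sphere at two points, round the resulting weight $\alpha$ to a multiple of $1/n$, and use equal weights $1/n$ so that the error $|\alpha-k/n|\,\|p-q\|\leq 1/n$ is uniform over the coordinates; this is exactly the right fix for the issue you identify (a single set of convex weights must serve all $\gamma$ simultaneously), and your uniform-error lemma is correct as stated, the case $\|z\|=1$ being trivial and the case $\|z\|<1$ following from convexity and the intermediate value theorem. Your argument is more elementary and self-contained ($n$ equally weighted generating operators with error $1/n$, versus an exact combination of exponentially many pieces after a preliminary discretization of the norm), while the paper's route buys a reusable statement about $L_\infty(\mu,Y)$ for arbitrary measures, which is what powers the $L_1(\mu)$ case of Theorem~\ref{theorem:representable-in-Gen}; note that your equal-weights trick would not directly replace Lemma~\ref{lemma:L_infty(mu,Y)} there without an additional measurability argument for the selections $t\mapsto y^{(i)}(t)$.
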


\begin{proof}
The space $\mathcal{L}(\ell_1(\Gamma), Y)$ can be easily identified with $\left[\bigoplus_{\gamma\in \Gamma}Y\right]_{\ell_\infty}$ using the isometric isomorphism  $\Phi\colon\mathcal{L}(\ell_1(\Gamma), Y)\longrightarrow\left[\bigoplus_{\gamma\in \Gamma}Y\right]_{\ell_\infty}$ given by  $\Phi(T)= (Te_\gamma)_{\gamma\in\Gamma}$ (see the proof of \cite[Lemma~2]{PayaSaleh}, for instance). With this identification and Example~\ref{exa:ell_1}, generating operators in $\mathcal{L}(\ell_1(\Gamma), Y)$ are exactly elements in $\left[\bigoplus_{\gamma\in \Gamma}Y\right]_{\ell_\infty}$ with every coordinate having norm one. Therefore, Lemma~\ref{lemma:L_infty(mu,Y)} gives the result.
\end{proof}

For finite-dimensional $\ell_1$-spaces, we get a better result.

\begin{corollary}
$B_{\mathcal{L}(\ell_1^n,Y)}=\conv(\Gen(\ell_1^n,Y))$ for every Banach space $Y$ and every $n\in \N$.
\end{corollary}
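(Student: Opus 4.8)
The plan is to upgrade the equality $B_{\mathcal{L}(\ell_1^n,Y)}=\overline{\conv}(\Gen(\ell_1^n,Y))$ coming from Proposition~\ref{cor:conv-Gen-l1} by removing the closure, exploiting that now there are only finitely many coordinates. First I would recall the isometric identification $\Phi\colon \mathcal{L}(\ell_1^n,Y)\longrightarrow \left[\bigoplus_{k=1}^n Y\right]_{\ell_\infty}$ given by $\Phi(T)=(Te_1,\dots,Te_n)$, which is used in the proof of Proposition~\ref{cor:conv-Gen-l1}. Under $\Phi$, the unit ball of $\mathcal{L}(\ell_1^n,Y)$ becomes the product $B_Y\times\cdots\times B_Y$, while by Example~\ref{exa:ell_1} the generating operators correspond exactly to the tuples $(y_1,\dots,y_n)$ with $\|y_k\|=1$ for every $k$, that is, to $S_Y\times\cdots\times S_Y$. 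Thus it suffices to establish the purely geometric equality $B_Y\times\cdots\times B_Y=\conv(S_Y\times\cdots\times S_Y)$, with the \emph{plain} convex hull.

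The key step is an explicit finite convex decomposition obtained by ``multiplying out'' a coordinatewise splitting. Given $(y_1,\dots,y_n)$ with each $\|y_k\|\leq 1$, I would write each coordinate as a convex combination of two unit vectors: for $y_k\neq 0$ put $u_k=y_k/\|y_k\|$, and for $y_k=0$ pick any $u_k\in S_Y$, so that
$$
y_k=\frac{1+\|y_k\|}{2}\,u_k+\frac{1-\|y_k\|}{2}\,(-u_k).
$$
Then I would index by sign patterns $\eps\in\{-1,1\}^n$: to each $\eps$ associate the point $w^{\eps}=(\eps_1 u_1,\dots,\eps_n u_n)\in S_Y\times\cdots\times S_Y$ and the coefficient $c_{\eps}=\prod_{k=1}^{n} r_k^{\eps_k}$, where $r_k^{+1}=\frac{1+\|y_k\|}{2}$ and $r_k^{-1}=\frac{1-\|y_k\|}{2}$. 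The verifications to carry out are that all $c_{\eps}\geq 0$ with $\sum_{\eps}c_{\eps}=\prod_{k=1}^{n}\bigl(r_k^{+1}+r_k^{-1}\bigr)=1$, and that $\sum_{\eps}c_{\eps}\,w^{\eps}=(y_1,\dots,y_n)$; the latter follows by grouping the $2^n$ terms according to the $j$-th sign and using $r_k^{+1}+r_k^{-1}=1$ for $k\neq j$, which collapses the $j$-th coordinate back to $r_j^{+1}u_j+r_j^{-1}(-u_j)=y_j$. This exhibits $(y_1,\dots,y_n)$ as a genuine finite convex combination of at most $2^n$ elements of $S_Y\times\cdots\times S_Y$, and transporting back through $\Phi^{-1}$ finishes the proof.

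The only subtlety worth flagging is that one cannot argue slot by slot: the convex hull of a product is in general strictly smaller than the product of the convex hulls, so the naive observation ``$\conv(S_Y)=B_Y$ in each coordinate'' does not by itself give the claim. What makes the argument go through is precisely the product expansion above, whose output lands simultaneously in the product of spheres rather than merely in each factor separately. The remaining work is the mild bookkeeping of checking nonnegativity, the normalization to one, and the coordinatewise reconstruction, and it terminates in a \emph{finite} combination exactly because $n<\infty$, which is what allows dropping the closure present in Proposition~\ref{cor:conv-Gen-l1}.
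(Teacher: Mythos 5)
Your proof is correct, but it takes a genuinely different route from the paper's. The paper never manipulates the convex combination directly: given $T\in B_{\mathcal{L}(\ell_1^n,Y)}$, it passes to the finite-dimensional range $Y_1=T(\ell_1^n)$, observes that $\Gen(\ell_1^n,Y_1)$ is compact (closed by Proposition~\ref{prop:GenXY-norm-closed} and bounded in the finite-dimensional space $\mathcal{L}(\ell_1^n,Y_1)$), so that its convex hull is already closed, applies Proposition~\ref{cor:conv-Gen-l1} to $Y_1$, and finally composes with the isometric inclusion $Y_1\hookrightarrow Y$, which preserves the generating property by Example~\ref{exa:ell_1}. You instead bypass Proposition~\ref{cor:conv-Gen-l1} and the compactness step altogether: the sign-pattern expansion is a correct, self-contained proof that $B_Y\times\cdots\times B_Y=\conv(S_Y\times\cdots\times S_Y)$ (the nonnegativity, normalization, and coordinatewise reconstruction all check out, including the degenerate coordinates $y_k=0$), and it is constructive and quantitative, exhibiting every $T$ as a convex combination of at most $2^n$ generating operators. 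It is in essence an explicit, non-inductive unrolling of Step one of Lemma~\ref{lemma:L_infty(mu,Y)} for a purely atomic measure with $n$ atoms; what the paper's route buys is brevity given the machinery already in place, plus the reusable ``restrict to the range'' device. One small correction to your closing remark: it is not true that the convex hull of a product is in general strictly smaller than the product of the convex hulls; for finitely many factors one always has $\conv(A_1\times\cdots\times A_n)=\conv(A_1)\times\cdots\times\conv(A_n)$, and your product expansion is exactly the proof of the nontrivial inclusion. So the ``slot by slot'' observation does yield the claim --- but only through the very computation you carry out, so nothing in your argument is affected.
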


\begin{proof}
For $T\in B_{\mathcal{L}(\ell_1^n,Y)}$ consider the finite-dimensional subspace of $Y$ given by $Y_1=T(\ell_1^n)$ and observe that $\overline{\conv}(\Gen(\ell_1^n,Y_1))=\conv(\Gen(\ell_1^n,Y_1))$ as $\Gen(\ell_1^n,Y_1)$ is compact. So, Proposition~\ref{cor:conv-Gen-l1} tells us that
$$
T\in \conv(\Gen(\ell_1^n,Y_1)).
$$
Finally, denoting $G_1$ the inclusion of $Y_1$ in $Y$, it is obvious that $G_1\circ G\in \Gen(\ell_1^n,Y)$ for every $G\in \Gen(\ell_1^n,Y_1)$. So $T\in \conv(\Gen(\ell_1^n,Y))$.
\end{proof}

The next result shows that the only finite-dimensional real spaces with this property are $\ell_1^n$ for $n\in \N$.

\begin{proposition}\label{thm:conv-Gen-X-finite-ext}
Let $X$ be a \emph{real} Banach space with $\dim(X)=n$ and such that $B_{\mathcal{L}(X,Y)}=\overline{\conv}(\Gen(X,Y))$ for every Banach space $Y$. Then, $X=\ell_1^n$.
\end{proposition}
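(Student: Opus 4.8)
The plan is to probe the hypothesis against a few well chosen target spaces $Y$ and distill from it a rigid incidence relation between the vertices of $B_X$ and of $B_{X^*}$.

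First I would take $Y=\K=\R$. Since $\mathcal{L}(X,\R)=X^*$ and $\Gen(X,\R)=\Spear(X^*)$ (Corollary~\ref{cor:charact-generatin-rank1}), the hypothesis reads $B_{X^*}=\conv(\Spear(X^*))$, no closure being needed as $X^*$ is finite-dimensional. As $\Spear(X^*)$ is closed, Milman's theorem gives $\ext(B_{X^*})\subseteq\Spear(X^*)$. Applying \cite[Corollary~2.8.iv]{KMMP-Spear} with $X^{**}=X$, every spear functional $x^*$ satisfies $|x^*(x)|=1$ for all $x\in\ext(B_X)$. Combining the two facts yields the key \emph{pairing relation}
\[
|x^*(x)|=1\qquad\text{for every }x\in\ext(B_X)\text{ and every }x^*\in\ext(B_{X^*}).
\]

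I would then read off its geometric meaning. Fix a vertex $x_0^*$ of $B_{X^*}$; it exposes a facet $F=\{x\in B_X:\ x_0^*(x)=1\}$. By the pairing relation each extreme point $x$ of $B_X$ has $x_0^*(x)=\pm1$, hence lies on $F$ or on $-F$; thus $\ext(B_X)=\pm V$, where $V$ is the set of vertices of $F$, and $B_X=\conv(F\cup(-F))$. If $F$ is an $(n-1)$-simplex, its $n$ vertices form a linear basis and $B_X=\conv(\pm V)$ is precisely the unit ball of $\ell_1^n$ in that basis; conversely $\ell_1^n$ has a simplicial facet. It therefore suffices to show the hypothesis forces $|V|=n$. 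For $n\leq 2$ the facet $F$ is automatically a simplex (a point or a segment), so $X=\ell_1^n$; hence assume $X\neq\ell_1^n$, so that $F$ is not a simplex, $|V|\geq n+1$, and $n\geq 3$.

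Now I would take $Y=\ell_2^2$. Writing $T=(f,g)$ and using that $x\mapsto f(x)^2+g(x)^2$ is convex (so its maximum over $B_X$ is attained at extreme points), strict convexity of the Euclidean ball shows that a norm-one $T$ is extreme in $B_{\mathcal{L}(X,\ell_2^2)}$ if and only if the norm-attaining set $\{x\in\ext(B_X):\ \|Tx\|_2=1\}$ spans $X$ (if $T=\tfrac12(T_1+T_2)$ then $T_1,T_2$ agree on every norm-attaining extreme point). By Corollary~\ref{cor:X-finite-dimension}, $T$ is generating exactly when $\|Tx\|_2=1$ for \emph{all} $x\in\ext(B_X)$; and since $\Gen(X,\ell_2^2)$ is closed (Proposition~\ref{prop:GenXY-norm-closed}), the hypothesis gives $\ext(B_{\mathcal{L}(X,\ell_2^2)})\subseteq\Gen(X,\ell_2^2)$. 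The contradiction I aim for is thus an extreme operator whose norm-attaining extreme points span $X$ but do not exhaust $\ext(B_X)$.

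To construct it, note that $T$ is determined by its restriction to $\operatorname{aff}(F)$, an affine map $A$ into $\R^2$, and that $\|T\|\leq 1$ is equivalent to $A(F)$ lying in the closed unit disk. Choosing $n$ affinely independent $w_1,\dots,w_n\in V$ (a basis of $X$) and prescribing $A(w_i)=v_i$ on the unit circle determines $A$; as $|V|\geq n+1$, there is a further vertex $w_0=\sum_i\lambda_i w_i$ with $\sum_i\lambda_i=1$ and some $\lambda_i<0$ (it cannot be a convex combination of the $w_i$). The goal is to pick the $v_i$ so that $A$ maps all of $F$ into the disk while $\|A(w_0)\|_2=\|\sum_i\lambda_i v_i\|_2<1$: then $w_1,\dots,w_n$ are norm-attaining and span $X$, so $T$ is extreme, yet $T$ misses $w_0$ and is not generating. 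The \textbf{main obstacle} is precisely this last choice: keeping \emph{every} vertex of $F$ inside the disk while a spanning set stays on the circle and $w_0$ strays inside. This is exactly where non-simplicity is indispensable — for a simplex the only spanning set of vertices is all of $V$, leaving no slack vertex — and the delicate point is to turn the affine dependence witnessing non-simplicity into a globally admissible choice of the $v_i$; concretely, one starts from a generating operator supplied by the pairing, such as $\bigl(\tfrac{x_1^*+x_2^*}{2},\tfrac{x_1^*-x_2^*}{2}\bigr)$ for two dual vertices $x_1^*,x_2^*$ (which sends every vertex of $B_X$ to the circle), and perturbs it, tangentially to the constraints $\|Tw_i\|_2=1$, so as to push $w_0$ strictly inside the disk without pushing any other vertex out.
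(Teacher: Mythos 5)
Your first half is sound and close in spirit to the paper's: taking $Y=\K$, identifying $\Gen(X,\K)$ with $\Spear(X^*)$, and extracting via Milman and \cite[Corollary~2.8.iv]{KMMP-Spear} the pairing relation $|x^*(x)|=1$ for all $x\in\ext(B_X)$, $x^*\in\ext(B_{X^*})$ is correct (the paper gets essentially the same information by showing $X^*$ is an almost CL-space, hence $n(X)=1$). Two remarks there: you should invoke McGregor's theorem \cite{McGregor} (as the paper does) to know that $\ext(B_X)$ is \emph{finite} — your later argument silently treats $F$ as a polytope with finitely many vertices, and the pairing relation alone does not obviously give that; and the reduction ``$F$ a simplex $\Leftrightarrow X=\ell_1^n$'' is fine.

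The genuine gap is in the second half, and you have located it yourself: the entire proof hinges on producing, for every non-simplicial $F$, an affine map $A$ of $F$ into the Euclidean disk that keeps $n$ affinely independent vertices on the circle, keeps \emph{all} remaining vertices inside the closed disk, and pushes at least one vertex strictly inside. What you offer is a first-order perturbation heuristic starting from a generating operator; but the feasibility of that linear system (decrease $\|A(w_0)\|^2$ to first order while not increasing $\|A(w)\|^2$ for the finitely many other vertices, all of which start \emph{on} the circle) is exactly the content of the claim and is not established — it is a nontrivial linear-programming/geometry statement about an arbitrary polytope satisfying the pairing relation, and nothing in the sketch rules out that for some such $F$ every admissible $A$ with a spanning set on the circle forces all vertices onto the circle. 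Until that construction is carried out (or the target $\ell_2^2$ is abandoned), the proof is incomplete. It is worth noting how the paper sidesteps this difficulty entirely: instead of fixing a nice strictly convex target and hunting for an extreme non-generating operator, it builds a bespoke target $Y$ by renorming $X$ so that one chosen extreme point $e_{n+1}$ of $B_X$ has $\|e_{n+1}\|_Y<1$ while $n$ linearly independent extreme points $e_1,\dots,e_n$ remain extreme in $B_Y$; then $\Id\in S_{\mathcal{L}(X,Y)}$ is not generating, yet any convex combination of norm-one operators equal to $\Id$ must fix each $e_j$ (extremality in $B_Y$) and hence consist only of $\Id$ itself. That one-page argument replaces the whole delicate disk construction, so if you want to keep your route you must either prove the affine-map claim in full or switch to a custom renorming of $X$ as the target.
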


\begin{proof}\parskip=0ex
Proposition~\ref{prop:Bola=conv-gen-X-K-implies-X*-almostCL} tells us that $X^*$ is an almost CL-space so $n(X^*)=n(X)=1$. Therefore, as $X$ is real, the set $\ext(B_X)$ is finite by \cite[Theorem~3.2]{McGregor}. Our goal is to show that $\ext(B_X)$ contains exactly $2n$ elements as this clearly implies that $X$ is isometrically isomorphic to the real space $\ell_1^n$.

We suppose that $\ext(B_X)$ has more than $2n$ elements and we show that, in such a case, there is a Banach space $Y$ ($=X$ with a new norm) such that $B_{\mathcal{L}(X,Y)}\neq \conv(\Gen(X,Y))$. Since $\dim(X)=n$ and $\ext(B_X)$ has more than $2n$ elements, we may find $\{e_1,\ldots,e_n\} \subset\ext(B_X)$ linearly independent and $e_{n+1} \in\ext(B_X)$ satisfying
$$
e_{n+1} \notin \{\pm e_j \colon j=1,\ldots,n\}.
$$
For each $j=1,\ldots,n$, as $\ext(B_X)$ is finite, we can pick $f_j\in X^*$ such that
$$
1=f_j(e_j)>c_j=\max\left\{f_j(x)\colon x\in\ext(B_X)\setminus\{e_j\} \right\}.
$$
Besides, define $c=\max\left\{c_j\colon j=1,\ldots,n\right\}<1$, take $\eps>0$ satisfying $(1+\eps)c<1$, and consider the Banach space $Y$ whose unit ball is
$$
B_Y=\conv \Big(\ext(B_X)\cup \{\pm(1+\eps)e_{n+1}\}\Big).
$$
Now, observe that $e_1,\ldots,e_n$ are also extreme points of $B_Y$. Indeed, fixed $j\in \{1,\ldots,n\}$, our choice of $c$ gives
$$
f_j(x)\leq (1+\eps)c_j<1=f_j(e_j)
$$
for every $x\in \ext(B_X)\cup \{\pm(1+\eps)e_{n+1}\}$ with $x\neq e_j$. So $e_j$ cannot lie in a proper segment of $B_Y$.

Observe that $\overline{\conv}(\Gen(X,Y))=\conv(\Gen(X,Y))$, as $\mathcal{L}(X,Y)$ is finite-dimensional and $\Gen(X,Y)$ is norm-closed by Proposition~\ref{prop:GenXY-norm-closed}.

Finally, consider the operator $\Id\in \mathcal{L}(X,Y)$ which is not generating by Corollary~\ref{cor:X-finite-dimension} because $e_{n+1}\in \ext(B_X)$ and $\|\Id(e_{n+1})\|_Y=\|e_{n+1}\|_Y<1$. If $\Id \in\conv(\Gen(X,Y))$, we may find $M\in \N$, $\lambda_1,\ldots,\lambda_M\geq 0$ with $\sum_{i=1}^M \lambda_i=1$ and $G_1,\ldots,G_M\in \Gen(X,Y)$ such that
$$
\Id=\sum_{i=1}^M\lambda_i G_i.
$$
Then, for each $j=1,\ldots,n$, we have that
$$
e_j=\Id(e_j)=\sum_{i=1}^M \lambda_i G_i(e_j)\quad \Longrightarrow \quad G_i(e_j)=e_j \quad \forall \, i\in\{1,\ldots,M\}
$$
as $e_j\in \ext(B_Y)$. Since $\{e_1,\ldots,e_n\}$ is linearly independent and $\dim(X)=n$, it follows that $G_i=\Id$ for all $i=1,\ldots,M$. Therefore, we have that $\Id \notin\conv(\Gen(X,Y))=\overline{\conv}(\Gen(X,Y))$ which finishes the proof.
\end{proof}

\section*{Acknowledgments}
The authors would like to thank Antonio Avil\'{e}s and Rafael Pay\'{a} for kindly answering several questions regarding the topics of this manuscript.

Part of the research of this manuscript was done while Alicia Quero was visiting V.~N. Kazarin National University in Kharkiv, Ukraine, from March to May 2021 with the support of the Spanish Ministerio de Universidades, grants FPU18/03057 and EST19/00601. The authors have been supported by PID2021-122126NB-C31 funded by MCIN/AEI/ 10.13039/501100011033 and ``ERDF A way of making Europe'', by Junta de Andaluc\'ia I+D+i grants P20\_00255 and FQM-185, and by ``Maria de Maeztu'' Excellence Unit IMAG, reference CEX2020-001105-M funded by MCIN/AEI/10.13039/501100011033.

\end{document}